\documentclass[12pt]{amsart}
\usepackage{fullpage}
\usepackage[all]{xy}
\usepackage{stmaryrd,url,amssymb,combelow}
\usepackage[T1]{fontenc}
\frenchspacing

\newtheorem{theorem}{Theorem}[section]
\newtheorem{lemma}[theorem]{Lemma}
\newtheorem{cor}[theorem]{Corollary}
\newtheorem{conj}[theorem]{Conjecture}
\newtheorem{prop}[theorem]{Proposition}
\theoremstyle{definition}
\newtheorem{defn}[theorem]{Definition}
\newtheorem{hypothesis}[theorem]{Hypothesis}
\newtheorem{example}[theorem]{Example}
\newtheorem{remark}[theorem]{Remark}

\newtheorem{notation}[theorem]{Notation}
\numberwithin{equation}{theorem}

\newcommand{\bA}{\mathbf{A}}
\newcommand{\bB}{\mathbf{B}}
\newcommand{\bE}{\mathbf{E}}
\newcommand{\be}{\mathbf{e}}
\newcommand{\bv}{\mathbf{v}}
\newcommand{\bw}{\mathbf{w}}
\newcommand{\CC}{\mathbb{C}}
\newcommand{\FF}{\mathbb{F}}
\newcommand{\QQ}{\mathbb{Q}}
\newcommand{\ZZ}{\mathbb{Z}}
\newcommand{\calE}{\mathcal{E}}

\newcommand{\calH}{\mathcal{H}}
\newcommand{\calM}{\mathcal{M}}
\newcommand{\calR}{\mathcal{R}}
\newcommand{\calS}{\mathcal{S}}
\newcommand{\frakm}{\mathfrak{m}}
\newcommand{\frako}{\mathfrak{o}}
\newcommand{\llangle}{\langle \! \langle}
\newcommand{\rrangle}{\rangle \! \rangle}
\newcommand{\llbrace}{\{\!\{}
\newcommand{\rrbrace}{\}\!\}}

\DeclareMathOperator{\aff}{aff}
\DeclareMathOperator{\an}{an}
\DeclareMathOperator{\bd}{bd}

\DeclareMathOperator{\fin}{fin}
\DeclareMathOperator{\Frac}{Frac}
\DeclareMathOperator{\Gal}{Gal}
\DeclareMathOperator{\GL}{GL}
\DeclareMathOperator{\inte}{int}
\DeclareMathOperator{\Mod}{\mathbf{Mod}}
\DeclareMathOperator{\Norm}{Norm}

\DeclareMathOperator{\rig}{rig}
\DeclareMathOperator{\sSp}{sSp}
\DeclareMathOperator{\Spec}{Spec}

\begin{document}

\title{Frobenius modules over multivariate Robba rings}
\author{Kiran S. Kedlaya}
\address{Department of Mathematics, University of California, San Diego, La Jolla, CA 92093, USA}
\email{kedlaya@ucsd.edu}
\date{August 28, 2020}

\begin{abstract}
We describe a class of multivariate series rings generalizing the usual Robba ring over a $p$-adic field, and give a basic development of $\varphi$-modules over such rings.
This makes it possible to give a unified survey of a number of recent developments in $p$-adic Hodge theory.
\end{abstract}

\thanks{Thanks to Laurent Berger, Hui Gao, Ruochuan Liu, Gergely Z\'abr\'adi, and Sarah Zerbes for feedback. The author was supported by NSF (grants DMS-1101343, DMS-1501214, DMS-1802161); UCSD (Warschawski Professorship); and IAS (Visiting Professorship 2018--2019).}

\maketitle

In $p$-adic Hodge theory, a prominent role is played by certain $p$-adic power series rings known as \emph{Robba rings}. The Robba ring with coefficients in a nonarchimedean field $K$ is the ring of formal sums
$\sum_{n \in \ZZ} c_n \pi^n$ with coefficients in $K$ which converge on some annulus of the form $* < |\pi| < 1$. In practice, such rings typically come equipped with extra structure, especially a \emph{Frobenius endomorphism} which on the subring of series with integral coefficients is a lift of some power of the Frobenius endomorphism modulo $p$.

In fact, there are several different (but related) constructions in $p$-adic Hodge theory that naturally give rise to Robba rings; these constructions are associated 
via the \emph{field of norms} construction of Fontaine--Wintenberger \cite{fontaine-wintenberger}
to various infinite towers of field extensions over a finite extension of $\QQ_p$. The oldest and most frequently encountered case is that of the $p$-cyclotomic tower, as originally considered
by Fontaine \cite{fontaine-phigamma} in his original theory of $(\varphi, \Gamma)$-modules.
Fontaine's original construction does not involve the Robba ring, but rather the ring obtained from it by restricting to series with bounded coefficients (the \emph{bounded Robba ring}) and then completing for the $p$-adic topology. Subsequently, Cherbonnier--Colmez \cite{cherbonnier-colmez} obtained a descent of Fontaine's construction to the bounded Robba ring, and Berger \cite{berger-inv}
(in conjunction with work of this author \cite{kedlaya-local}) showed that the base extension to the full Robba ring allowed for a direct recovery of Fontaine's functors on Galois representations from the associated $(\varphi, \Gamma)$-modules. This recovery extends to other constructions of interest in (cyclotomic) Iwasawa theory, such as the Bloch--Kato exponential map \cite{berger-bk}.

For various reasons, it is natural to ask to what extent a parallel narrative can be constructed with the cyclotomic tower replaced by other towers. (One reason is to extend explicit formulas in cyclotomic Iwasawa theory to Iwasawa theory for other $p$-adic Lie towers; another is to look for candidate constructions for a $p$-adic analytic Langlands correspondence, more on which below.) However, there are only a few candidate towers that give rise to univariate power series rings. One class is the non-Galois Kummer towers considered by
Breuil \cite{breuil} in the context of $p$-divisible group and Kisin \cite{kisin-crys} in the context of crystalline representations; the full analogue of Fontaine's construction for such towers was introduced by
Caruso \cite{caruso}, while the analogue of Cherbonnier--Colmez descent has been established by Gao--Liu \cite{gao-liu} and in a more robust fashion by Gao--Poyeton \cite{gao-poyeton}. Another candidate class of towers are non-cyclotomic $\ZZ_p$-towers, as originally considered by Fourquaux \cite{fourquaux} and (in more detail) Kisin--Ren \cite{kisin-ren}; the analogue of Fontaine's construction for such towers was introduced by Chiarellotto--Esposito \cite{chiarellotto-esposito}, but there is no complete analogue of Cherbonnier--Colmez descent (see for example \cite[Theorem~0.6]{fourquaux-xie}). Nonetheless, one can make some Iwasawa-theoretic constructions in this context;
for example, Berger--Fourquaux \cite{berger-fourquaux} adapt Berger's description of the Bloch--Kato exponential
and the Perrin-Riou map \cite{berger-bk} to this setting.

Unfortunately, there are essentially no other candidate towers which give rise to a \emph{univariate} series ring; a formal statement of this form has been articulated by  Cais--Davis \cite{cais-davis}. In some sense, the difficulty arises from the failure of imperfect fields of characteristic $p$ (notably including power series fields) to admit \emph{canonical} lifts to discrete valuation rings,

The purpose of this paper is to develop a \emph{multivariate} generalization of Robba rings, and Frobenius modules over same; this is meant to encompass a number of recent developments in $p$-adic Hodge theory in which such rings appear. Here is a representative (but not exhaustive) sample.
\begin{itemize}
\item
For the compositum of all $\ZZ_p$-extensions over a finite extension of $\QQ_p$, one may modify the field of norms construction to obtain series in more than one variable; in the case of an unramified extension, this has been described explicitly by Berger \cite{berger-multi} in order to address the failure of overconvergent descent in the Fourquaux--Kisin--Ren setting.
\item
It is natural to extend $p$-adic Hodge theory to allow consideration of Galois representations valued in affinoid algebras; this gives rise to series rings formed from the Robba ring by adjoining new variables which are ``inert'', in the sense that one extends
endomorphisms of the Robba ring so that they fix the new variables. 
These have been considered by Berger--Colmez \cite{berger-colmez}, Kedlaya--Pottharst--Xiao
\cite{kpx}, Hellmann--Schraen \cite{hellmann-schraen}, and others.
\item
In the other direction, it is also natural to consider representations of \'etale fundamental groups of rigid analytic spaces; this gives rise to series rings formed from the Robba ring by adjoining new variables which are not inert, in that one considers some endomorphisms that do not fix these variables.
These have been considered by Andreatta--Brinon \cite{andreatta-brinon}, Kedlaya--Liu \cite{kedlaya-liu2}, and others; these constructions admit strong links with the theory of \emph{perfectoid spaces} in the sense of Scholze \cite{scholze1, scholze2}.
\item
Colmez has described a $p$-adic Langlands correspondence for $\GL_2(\QQ_p)$
in terms of classical $(\varphi, \Gamma)$-modules \cite{colmez-rep}; this involves interpreting the Robba ring in terms of the group algebra of a certain $p$-adic Lie group.
Motivated by this, various authors have proposed analogous constructions starting from other groups, notably Schneider--Vigneras--Z\'abr\'adi \cite{svz, zabradi-robba}.
\item
Following up on the previous example, Z\'abr\'adi \cite{zabradi-phigamma2} has established a correspondence between certain multivariate $(\varphi, \Gamma)$-modules and representations of \emph{products} of Galois groups of $p$-adic fields; these again admit overconvergent descent, as recently shown by Pal--Z\'abr\'adi \cite{pal-zabradi}. This construction can also be interpreted in terms of Drinfeld's lemma for rigid analytic spaces, as in the work of Carter--Kedlaya--Z\'abr\'adi \cite{carter-kedlaya-zabradi}.
\end{itemize}

In order to construct a framework that can encompass such disparate examples, we work initially under rather relaxed hypotheses: our Robba rings arise from lifting affinoid algebras (or even \emph{semiaffinoid algebras} in the sense of Kappen \cite{kappen}) from characteristic $p$,
and our ``Frobenius endomorphisms'' lift certain endomorphisms in characteristic $p$ which need not coincide with a power of absolute Frobenius. However, not all aspects of the usual theory of Frobenius modules over Robba rings remains applicable in such generality, so some of our discussions are forced to take place under more restrictive hypotheses. (One restrictive hypothesis that we insist upon is that our generalized Robba rings are still commutative, although we we will consider noncommutative algebras over these rings in order to describe extra structures. Noncommutative Robba rings associated to group rings have been described in \cite{zabradi-robba}.)

To construct the analogue of a $(\varphi, \Gamma)$-module in our setting, we consider what we call an \emph{enhanced Frobenius lift}; by this, we mean a not necessarily commutative algebra over a particular Robba ring containing a distinguished element $\varphi$ which skew-commutes with the Robba ring via a Frobenius lift. A typical example would be a (suitably completed) group algebra for the product of the free monoid generated by the Frobenius lift with some other group. An \emph{enhanced $\varphi$-module} is then a left module for this ring which is finite projective as a module over the Robba ring. (The restriction to finite projective modules is partly made for expository simplicity, and partly because this is the most crucial case for applications to Galois representations. See \cite{kedlaya-liu2} and especially \cite{kedlaya-liu-finiteness} for some examples of the use of nonprojective modules in this context.)

At this level of generality, one can identify the \emph{\'etale} enhanced $\varphi$-modules, show that they descend canonically to the bounded Robba ring. In the case where the enhanced Frobenius lift is constructed from an action of a $p$-adic Lie group, this descent can be used to identify \emph{locally analytic vectors} for the action of the group on the enhanced $\varphi$-module. One also shows that base extension of \'etale enhanced $\varphi$-modules from the bounded Robba ring to its $p$-adic completion is fully faithful; it is an interesting problem to identify general conditions under which this base extension is also essentially surjective. This amounts to finding analogues
of the theorem of Cherbonnier--Colmez \cite{cherbonnier-colmez} on the overconvergence of classical (i.e., cyclotomic) $(\varphi, \Gamma)$-modules; one such analogue is the aforementioned theorem of Gao--Liu and Gao--Poyeton \cite{gao-liu, gao-poyeton} on overconvergence of $(\varphi, \tau)$-modules in the sense of Caruso.

\begin{notation}
Throughout this paper, let $\frako$ be a complete discrete 
valuation ring with maximal ideal $\frakm$, fraction field $K$, and residue field $k$.
For the moment we do not restrict the characteristic of $K$,
but we will do so later (Hypothesis~\ref{H:Frobenius}).
For convenience only, fix a generator $\varpi$ of $\frakm$ and a normalization of the $\frakm$-adic absolute value on $\frako$;
nothing will depend essentially on these choices.
\end{notation}

\section{Semiaffinoid algebras}
\label{sec:mixed affinoid}

We start by describing a generalization of the usual class of affinoid algebras over a complete discretely valued field, appearing in Kappen's construction of \emph{uniformly rigid spaces} \cite{kappen}. This is needed to optimize the applicability of our general framework; however, many natural examples require only affinoid algebras, so one may safely skip this section on a first reading.

\begin{defn}
For $m$ and $n$ two nonnegative integers, let
\[
\frako \langle T_1,\dots,T_m \rangle \llangle U_1, \dots, U_n \rrangle
\]
denote the $\frakm$-adic completion of $\frako[T_1,\dots,T_m]\llbracket U_1,\dots,U_n \rrbracket$; this ring is noetherian and regular, and even excellent
(see \cite[Proposition~7]{valabrega1} or \cite[Theorem~9]{valabrega2} depending on whether
$\frako$ is of equal or mixed characteristic).
Put 
\[
K \langle T_1,\dots,T_m \rangle \llangle U_1, \dots, U_n \rrangle = 
\frako \langle T_1,\dots,T_m \rangle \llangle U_1, \dots, U_n \rrangle \otimes_{\frako} K,
\]
viewed as a linearly topologized ring with $\frako \langle T_1,\dots,T_m \rangle \llangle U_1, \dots, U_n \rrangle$ as a ring of definition and $(\varpi)$ as an ideal of definition.

A \emph{semiaffinoid algebra} over $K$
is a $K$-algebra $A$ which can be written as a quotient of 
$K \langle T_1,\dots,T_m \rangle \llangle U_1, \dots, U_n \rrangle$ for some $m,n$;
such a quotient map is called a \emph{presentation} of $A$,
and the image of $\frako \langle T_1,\dots,T_m \rangle \llangle U_1, \dots, U_n \rrangle$
(equipped with the quotient map) is called an \emph{integral model} of $A$.
(This is shorthand for the terminology of \cite{kappen}, where an integral model 
in our sense is an \emph{affine flat formal model of ff (formally finite) type}.)

The topology induced by a presentation does not depend on the choice of this presentation;
moreover, for this topology, any $K$-linear homomorphism of semiaffinoid algebras is continuous. To see this, use \cite[Corollary~2.14]{kappen} to lift a homomorphism of semiaffinoid algebras to a homomorphism of some integral models, then apply \cite[Lemma~2.1]{kappen}.
\end{defn}

While the Noether normalization theorem for affinoid algebras does not extend to semiaffinoid algebras
(see \cite[\S 2B]{kappen}), nonetheless one can prove the following statement.
\begin{lemma} \label{L:mixed Nullstellensatz}
Let $A$ be a semiaffinoid algebra over $K$. 
\begin{enumerate}
\item[(a)]
Every maximal ideal of $A$ has residue field finite over $K$.
\item[(b)]
The ring $A$ is a Jacobson ring.
\end{enumerate}
\end{lemma}
\begin{proof}
For (a), see \cite[Lemma~2.3]{kappen};
for (b), see \cite[Proposition~2.17]{kappen}.
\end{proof}

\begin{lemma} \label{L:flat completion}
Let $A \to B$ be a homomorphism of noetherian rings. Choose $f \in A$ and let $\widehat{A}, \widehat{B}$ denote the $f$-adic completions of $A,B$. If
$A/f^n A \to B/f^n B$ is flat for each positive integer $n$ (so in particular if $A \to B$ is flat), then $\widehat{A} \to \widehat{B}$ is flat.
\end{lemma}
\begin{proof}
Apply \cite[Tag~0523]{stacks-project} with $R = A, S = B, I = (f), M = S$.
\end{proof}

\begin{defn}
For $R$ a ring of characteristic $p>0$, we define the \emph{core} of $R$
to be the subring of elements of $R$ which are $p^n$-th powers for all positive integers $n$. This subring is evidently preserved by automorphisms of $R$ (i.e., it is a \emph{characteristic subring} of $R$).
\end{defn}

\begin{lemma} \label{L:integral closure of k}
Suppose that $K$ is of characteristic $p>0$ and $k$ is perfect.
Let $A$ be a semiaffinoid algebra over $K$ with core $R$. 
\begin{enumerate}
\item[(a)]
The ring $R$ is a finite $k$-algebra.
\item[(b)]
If $A$ is reduced, then $R$ equals the integral closure of $k$ in $A$.
\item[(c)]
For $N$ the nilradical of $A$, the projection $A \to A/N$ induces an isomorphism of $R$ with the core of $A/N$.
\end{enumerate}
\end{lemma}
\begin{proof}
We prove first that if $A$ is reduced, then the integral closure $R'$ of $k$ in $A$ is a finite $k$-algebra. Since $A$ is noetherian, we may reduce to the case where $A$ is connected. In this case, $R'$ is reduced, connected, and integral over $k$, and hence an algebraic field extension of $k$. In particular, $R'$ injects into the residue field of every maximal ideal of $A$. By Lemma~\ref{L:mixed Nullstellensatz}, every such residue field is finite over $K$, so we may reduce to the case where $A$ is a finite field extension of $K$.
In this case, the residue field $k'$ of $A$ is finite over $k$ and hence also perfect, and the Cohen structure theorem lets us write $A$ as a Laurent series field over $k'$, in which $k'$ is integrally closed. This proves the claim.

We next verify (b). We may again assume that $A$ is connected; by injecting $A$ into the product of the quotients by its minimal prime ideals, we may also assume that $A$ is integral.
Given $x\in R$, for each maximal ideal $I$ of $A$, we may argue as in the previous paragraph that the image of $x$ in $A/I$ is integral over $k$.
By Krull's theorem, $I \cap R = \bigcap_{n=0}^\infty I^{p^n} = \bigcap_{n=0}^\infty I^n = 0$. Consequently, any integral relation for $x$ over $k$ in $A/I$ holds also in $A$, so $x \in R'$.

To conclude, it suffices to check (c). Since $A$ is noetherian, there exists a positive integer $e$ such that $N^{p^e} = 0$. For $x,y \in R$ with $x-y \in N$, we may write $x = u^{p^e}, y = v^{p^e}$ and $x-y = (u-v)^{p^e}$ to see that $u-v$ must also vanish in every residue field of $A$. Therefore $u-v\in N$ and so $x-y = 0$; that is, the map from $R$ to the core of $A/N$ is injective. (This already suffices to imply (a).) Conversely, if $x \in A$ maps to the core of $A/N$, then for each positive integer $n$ there exists $u_n \in A$ such that $x - u_n^{p^n} \in N$. By the previous discussion, the elements 
$u_{n+e}^{p^n}$ all equal a single element $y \in R$ with $x-y \in N$; hence the map from $R$ to the core of $A/N$ is surjective.
\end{proof}

\begin{remark} \label{R:twist structure morphism}
Suppose that $K \cong k((\varpi))$ and that $A$ is a semiaffinoid algebra over $K$ via some morphism $f_1: K \to A$. Let $f_2: K \to A$ be a continuous homomorphism of topological $k$-algebras which is not necessarily $K$-linear with respect to $f_1$. It may not be immediately obvious that $A$ is again a semiaffinoid algebra over $K$ via $f_2$, but this is indeed the case: given a presentation 
$K \langle T_1,\dots,T_m \rangle \llangle U_1, \dots, U_n \rrangle \to A$
which is $K$-linear with respect to $f_1$, we may also form a presentation
\[
K \langle T_1,\dots,T_{m+1} \rangle \llangle U_1, \dots, U_{n+1} \rrangle \to A
\]
which is $K$-linear with respect to $f_2$ by mapping
$T_1,\dots,T_m,U_1,\dots,U_n$ to their images under $f_1$; mapping $U_{n+1}$ to $f_1(\varpi)$; and mapping $T_{m+1}$ to $f_2(\varpi)^h f_1(\varpi^{-1})$ for some positive integer $h$ large enough so that $f_2(\varpi)^h f_1(\varpi^{-1})$ is topologically nilpotent. This also works if $f_2$ is only $k$-semilinear for some automorphism of $k$, rather than $k$-linear.
\end{remark}

\begin{defn} \label{D:semiaffinoid spaces}
The category of \emph{semiaffinoid spaces} over $K$ is defined as the opposite category of semiaffinoid algebras over $K$; for $A$ an affinoid algebra, let $\sSp A$ denote the corresponding semiaffinoid space. Berthelot's generic fiber construction defines a canonical functor from semiaffinoid spaces over $K$ to rigid analytic spaces extending the identity functor on affinoid spaces \cite[\S 2C1]{kappen}. For example, for $A = K \llangle U \rrangle$, the rigid space associated to
$\sSp A$ is the open unit disc over $K$ with coordinate $U$.

As per \cite[Definition~2.22]{kappen}, a \emph{semiaffinoid subdomain} of a semiaffinoid space $\sSp A$ is the semiaffinoid space associated to the ring obtained from $A$
by choosing an integral model; then performing a finite sequence of open immersions, completions, and admissible blowups; and finally tensoring over $\frako$ with $K$.
Using Lemma~\ref{L:flat completion}, one sees that any $K$-linear homomorphism $A \to B$ arising in this manner is flat; by \cite[Corollary~2.25]{kappen}, the map $\sSp B \to \sSp A$
identifies $\sSp B$ with an admissible open subset of $\sSp A$ which is represented by $A \to B$ (i.e., any homomorphism $A \to C$ for which $\sSp C \to \sSp A$ factors through $\sSp B$ factors through $A \to B$). Unlike in rigid analytic geometry, however, it is unclear whether representability is sufficient to characterize semiaffinoid subdomains.

Using semiaffinoid subdomains, one obtains the \emph{uniformly rigid G-topology} on a semiaffinoid space \cite[Definition~2.35]{kappen}; beware that unlike for rigid spaces,
not every finite cover of a semiaffinoid space by semiaffinoid subdomains is an admissible cover. For example, as in \cite[Example~2.42]{kappen}, there is a nonadmissible covering of the closed unit disc by the open unit disc and the unit circle.

For the uniformly rigid G-topology, the structure presheaf is an acyclic sheaf \cite[Theorem~2.41]{kappen}.
One may thus glue to define \emph{uniformly rigid spaces} over $K$
as per \cite[Definition~2.46]{kappen}, together with a comparison functor to rigid spaces
\cite[Proposition~2.55]{kappen}.
\end{defn}

\begin{remark} \label{R:Kiehl}
There is a natural functor from finitely generated modules over a semiaffinoid algebra to coherent sheaves on the associated semiaffinoid space. Using acyclicity of the structure sheaf plus the flatness of the morphisms representing semiaffinoid subdomain,
one sees that this functor is fully faithful \cite[Corollary~2.43]{kappen}.
In the case of an affinoid algebra, Kiehl showed that this functor is also essentially surjective; however, for a general semiaffinoid algebra, this is unknown and quite possibly false
\cite[Conjecture~3.7]{kappen}. 
One does know that the essential image of the functor is closed under formation of submodules and quotients \cite[Corollary~3.6]{kappen}.
\end{remark}

\section{Dagger lifts of semiaffinoid algebras}
\label{sec:dagger lifts} 

We next generalize the observation that for $k$ a perfect field of characteristic $p$, the integral Robba ring over $W(k)$ is an incomplete but henselian discrete valuation ring with residue field $k((\pi))$, by forming similar lifts of semiaffinoid algebras.
(If you skipped \S\ref{sec:mixed affinoid}, read ``affinoid'' for ``semiaffinoid'' hereafter
and take $n=0$ in the following definitions.)
\begin{defn}
For $m,n$ two nonnegative integers
and $r>0$, define the ring 
\[
\bA\langle T_1,\dots,T_m \rangle \llangle U_1, \dots, U_n \rrangle^{r}
\]
as the completion of $\frako[\pi, T_1,\dots,T_m]\llbracket U_1,\dots,U_n \rrbracket[\pi^{-1}]$
for the $r$-Gauss norm
\[ 
\left| \sum_{i,j_*,k_*} a_{i,j_1,\dots,j_m,k_1,\dots,k_n} \pi^i T_1^{j_1} \cdots T_m^{j_m} U_1^{k_1}\dots U_n^{k_n} \right|_r = \max_{i,j_*,k_*} \{\left|a_{i,j_1,\dots,j_m,k_1,\dots,k_n}\right| \left| \varpi \right|^{ri}\}.
\]
Also define
\[
\bA\langle T_1,\dots,T_m \rangle \llangle U_1, \dots, U_n \rrangle ^{\dagger} = 
\bigcup_{r>0} \bA\langle T_1,\dots,T_m \rangle \llangle U_1, \dots, U_n \rrangle^{r}
\]
and identify the quotient of this ring modulo $(\varpi)$ with
$k((\overline{\pi})) \langle \overline{T}_1,\dots,\overline{T}_m \rangle \llangle \overline{U}_1, \dots, \overline{U}_n \rrangle$.
\end{defn}

\begin{lemma} \label{L:noetherian}
For any nonnegative integers $m,n$ and any $r \in \QQ_{>0}$, the rings 
\[
\bA\langle T_1,\dots,T_m \rangle\llangle U_1, \dots, U_n \rrangle^r, \qquad
\bA\langle T_1,\dots,T_m \rangle\llangle U_1, \dots, U_n \rrangle^{\dagger}
\]
are noetherian and regular. (They should also be excellent, but we do not verify this here.)
\end{lemma}
\begin{proof}
Let $R$ be the $(\frakm, \pi)$-adic completion of $\frako[\pi, T_1,\dots,T_m] \llbracket U_1,\dots,U_n \rrbracket$; since $R$ is obtained from a discrete valuation ring by a sequence of polynomial extensions and completions, it is noetherian and regular.
By similar reasoning, $R/\frakm R$ is noetherian and regular.

For $r = s/t \in \QQ_{>0}$, the ring
$\bA\langle T_1,\dots,T_m \rangle\llangle U_1, \dots, U_n \rrangle^r$
may be constructed by taking the completion of 
$R[\varpi^s \pi^{-t}]$ with respect to the ideal $(\varpi)$, then inverting $\pi$; it is thus again noetherian and regular.

The ring $S = \bA\langle T_1,\dots,T_m \rangle\llangle U_1, \dots, U_n \rrangle^{\dagger}$
is a weakly complete finitely generated algebra
over $R$ for the ideal $\frakm R$ and the generator
$\pi^{-1}$; it is thus noetherian by a theorem of Fulton \cite{fulton}. 
Since $\frakm S$ is a principal ideal contained in the Jacobson radical of $S$
(see Remark~\ref{R:Jacobson radical})
and 
\[
S/\frakm S \cong k((\overline{\pi})) \langle \overline{T}_1,\dots,\overline{T}_m \rangle \llangle \overline{U}_1, \dots, \overline{U}_n \rrangle
\]
is known to be regular, it follows that $S$ is regular.
\end{proof}

\begin{defn} \label{D:mixed affinoid}
Let $A$ be a semiaffinoid algebra over $k((\overline{\pi}))$.
A \emph{dagger lift} of $A$ is a flat $\frako$-algebra $S$ with $S/\frakm S \cong A$ admitting a surjection
\[
f: \bA\langle T_1,\dots,T_m \rangle\llangle U_1, \dots, U_n \rrangle^{\dagger} \to S
\]
for some  $m,n$;
any such surjection is called a \emph{presentation} of $S$. By Lemma~\ref{L:noetherian},
$S$ is noetherian.

Given a presentation $f$ of $S$, for $r>0$ define the subrings
\[
S^r = f(\bA\langle T_1,\dots,T_m \rangle\llangle U_1, \dots, U_n \rrangle^{r})
\]
and the quotient norm $\left|\bullet \right|_r$ on $S^r$ induced via $f$.
While these constructions depend on $f$,
any two choices of $f$ give the same rings $S^r$ and equivalent norms $\left| \bullet \right|_r$ for $r$ sufficiently small, by Lemma~\ref{L:dagger compatibility} below.
(Beware that this uniqueness assertion depends not only on $A$ as a topological ring, but also as a $k((\overline{\pi}))$-algebra. See Definition~\ref{D:dagger morphism}.)
\end{defn}

\begin{remark} \label{R:Hadamard}
Let $S$ be a dagger lift of a semiaffinoid algebra over $k((\overline{\pi}))$,
and fix a presentation of $S$.
We will use frequently the fact that for $0 < s \leq r$ and $x \in S^r$,
\[
\left| x \right|_s \leq \left| x \right|_r^{s/r}.
\]
A slightly stronger statement is that for $t \in [0,1]$,
\[
\left| x \right|_{r^ts^{1-t}} \leq \left| x \right|_r^{t} \left| x \right|_s^{1-t}.
\]
Both theorems are proved by reducing to the case $S = \bA\langle T_1,\dots,T_m \rangle\llangle U_1, \dots, U_n \rrangle^\dagger$,
then reducing to the case $x = \pi^i T_1^{j_1} \cdots T_m^{j_m} U_1^{k_1} \cdots U_n^{k_n}$, for which both inequalities become
equalities.
\end{remark}

\begin{remark} \label{R:Jacobson radical}
Let $S$ be a dagger lift of a semiaffinoid algebra over $k((\overline{\pi}))$.
For any $x \in 1 + \frakm S$, by Remark~\ref{R:Hadamard}, we have $\left| 1-x \right|_r < 1$ for
$r>0$ sufficiently small, so $x$ is a unit in $S$. That is, $\frakm S$ is contained in the Jacobson radical
of $S$.
\end{remark}

\begin{remark} \label{R:lim norm}
Let $S$ be a dagger lift of a semiaffinoid algebra $A$ over $k((\overline{\pi}))$.
For $x \in S \setminus \frakm S$ mapping to $\overline{x} \in A$, the expression for $\left| x \right|_r$ is dominated by $\left| \overline{x} \right|^r$ for sufficiently small $r$; in particular, $\lim_{r \to 0^+} \left| x \right|_r = 1$.
It follows that for $x \in S$ nonzero, we have $\lim_{r \to 0^+} \left| x \right|_r = \left| \varpi \right|^{m}$ where
$m$ is the largest integer for which $x \in \frakm S$.
\end{remark}

\begin{defn}
Let $S$ be a dagger lift of a semiaffinoid algebra over $k((\overline{\pi}))$.
For each positive integer $n$, the quotient topology on $S/\frakm^n S$ induced by
$\left| \bullet \right|_r$ is independent of $r$ provided that $r$ is sufficiently small.
By taking the inverse limit of these topologies, we obtain the \emph{weak topology} on $S$. It will follow from
Corollary~\ref{C:same norm} that the definition of the weak topology depends only on $S$ and not on the choice of a presentation.
\end{defn}

\begin{defn} \label{D:dagger morphism}
Let $S_1, S_2$ be dagger lifts of the semiaffinoid algebras $A_1, A_2$ over $k((\overline{\pi}))$. A \emph{dagger morphism} $g: S_1 \to S_2$ is a morphism which sends $\varpi$ to a generator of $\varpi S_2$, is
continuous with respect to the weak topologies, and has the property that the induced
(continuous) morphism  $\overline{g}: A_1 \to A_2$ (the \emph{reduction} of $g$)
is $k$-semilinear with respect to some automorphism of $k$
(but not necessarily $k$-linear, let alone $k((\overline{\pi}))$-linear).

We say that $g$ is \emph{norm-scaling} if for some (hence any) quotient norm on $A_2$ induced by a presentation, the restriction of this norm to $k((\overline{\pi}))$ via $\overline{g}$ is equivalent to a multiplicative norm. This holds in particular if $g$ is $k((\overline{\pi}))$-linear.
\end{defn}

\begin{remark}
An illustrative example of a dagger morphism which is not norm-scaling is the map
\begin{gather*}
g: \bA^\dagger \to\bA \langle T_1, T_2 \rangle^\dagger/(T_1 T_2 - \pi), \\
\pi \mapsto \pi T_1, \quad \pi^{-1} \mapsto \pi^{-2} T_2.
\end{gather*}
A variant is the map
\begin{gather*}
g: \bA\langle T_1, T_2 \rangle^\dagger/(T_1 T_2^2 - \pi)    \to\bA \langle T_1, T_2 \rangle^\dagger/(T_1 T_2 - \pi), \\
\pi \mapsto \pi T_1,\quad \pi^{-1} \mapsto \pi^{-2} T_2, \quad
T_1 \mapsto T_2, \quad
T_2 \mapsto T_1,
\end{gather*}
which is an isomorphism, the inverse map being
\begin{gather*}
h: \bA\langle T_1, T_2 \rangle^\dagger/(T_1 T_2 - \pi)    \to\bA \langle T_1, T_2 \rangle^\dagger/(T_1 T_2^2 - \pi), \\
\pi \mapsto T_1 T_2, \quad \pi^{-1} \mapsto \pi^{-1} T_2, \quad
T_1 \mapsto T_2, \quad
T_2 \mapsto T_1
\end{gather*}
(note that $\pi^2 \mapsto \pi T_1$).
\end{remark}

\begin{remark} \label{R:twist structure morphism2}
Let $S$ be a dagger lift of the semiaffinoid algebra $A$ over $k((\overline{\pi}))$,
and choose a presentation $f: \bA^\dagger\langle T_1, \dots, T_m \rangle \llangle U_1,\dots,U_n \rrangle\to S$.
Let $g: \bA^\dagger \to S$ be a dagger morphism with reduction $\overline{g}$.
By Remark~\ref{R:twist structure morphism}, $A$ also has the structure of a semiaffinoid algebra over $k((\overline{\pi}))$ via $\overline{g}$. For this structure, $S$ may be viewed as a dagger lift of $A$ using the presentation
\[
\bA^\dagger\langle T_1, \dots, T_{m+1} \rangle \llangle U_1,\dots,U_{n+1} \rrangle\to S
\]
with $\bA^\dagger$ mapping via $g$, $T_1,\dots,T_m,U_1,\dots,U_n$ mapping as before,
$U_{n+1}$ mapping to $f(\pi)$, and $T_{m+1}$ mapping to $g(\pi)^h f(\pi^{-1})$ for some positive integer $h$ large enough that $\overline{g}(\overline{\pi})^h \overline{f}(\overline{\pi}^{-1})$ is topologically nilpotent in $A$.
\end{remark}

\begin{lemma} \label{L:dagger compatibility}
For $i=1,2$ and $r>0$, set notation as follows:
\begin{itemize}
\item
let $A_i$ be a semiaffinoid algebra over $k((\overline{\pi}))$; 
\item
let $S_i$ be a dagger lift of $A$;
\item
let $
f_i: \bA\langle T_1,\dots,T_{m_i} \rangle \llangle U_1, \dots, U_{n_i} \rrangle^{\dagger} \to S_i$ be a presentation;
\item
put $S_i^r = f_i(\bA\langle T_1,\dots,T_{m_i} \rangle\llangle U_1, \dots, U_{n_i} \rrangle^{r})$;
\item
let
$\left| \bullet \right|_{i,r}$ be the quotient norm on $S_{i}^r$ induced from $\left| \bullet \right|_r$
via $f_i$; 
\item
let $\overline{f}_i: k((\overline{\pi})) \langle \overline{T}_1,\dots,\overline{T}_{m_i} \rangle \llangle \overline{U}_1, \dots, \overline{U}_{n_i} \rrangle
 \to A_i$
be the reduction of $f_i$ modulo $\frakm$;
\item
let $\left| \bullet \right|_i$ be the quotient norm on $A_i$ induced via $\overline{f}_i$.
\end{itemize}
Let $g: S_1 \to S_2$ be a norm-scaling dagger morphism.
Then there is a unique choice of $t$ for which the following statements hold.
\begin{enumerate}
\item[(a)]
There exists $c_1 > 0$ such that for all $\overline{x} \in A_1$, 
\[
\left| \overline{g}(\overline{x}) \right|_2 \leq c_1 \left| \overline{x} \right|_1^t.
\]
\item[(b)]
There exist $r_0, c_2 > 0$ such that for $0 < r \leq r_0$,
$g$ maps $S_1^{tr}$ to $S_2^{r}$ 
and 
\[
\left|g(x)\right|_{2,r} \leq c_2^r \left|x\right|_{1,tr} \qquad (x \in S_1^{tr}).
\]
\end{enumerate}
We will say that $g$ is \emph{norm-scaling with parameter $t$}.
\end{lemma}
\begin{proof}
Since the restriction of
$\left| \bullet \right|_2$ to $\overline{g}(k((\overline{\pi})))$ is equivalent to a multiplicative norm, the spectral seminorm of $\overline{g}(\overline{\pi})$ must equal $\left| \overline{\pi} \right|^t$ for some $t>0$.
If we equip $A_2$ with the norm $\left| \bullet \right|_2^{1/t}$, we may then view it as a Banach algebra over $k((\overline{\pi}))$ via $\overline{g}$, in which case $\overline{g}$ becomes a continuous and hence bounded morphism of Banach algebras over $k((\overline{\pi}))$.
This proves (a).

Given (a),
choose some $\epsilon>0$.
Choose $r_1>0$ such that $S_1^{tr_1}$ surjects onto $A_1$ and $S_2^{r_1}$ surjects onto $A_2$. 
Set 
\[
u_i = \begin{cases} \pi & i=0 \\ T_i & i=1,\dots,m_1 \\
U_{i-m_1} & i=m_1+1,\dots,m_1+n_1.
\end{cases}
\]
For $i=0,\dots,m_1+n_1$, if $g(u_i) = 0$, set $v_i = 0$; otherwise, define $v_i$ as follows. Let $h_i$ be the largest nonnegative integer for which
$\varpi^{h_i}$ divides $g(u_i)$ and put $w_i = \varpi^{-h_i} g(u_i) \in S_2^{r_1}$.
Lift $\overline{w}_i \in A_2$ to some 
\[
\overline{v}_i \in k((\overline{\pi})) \langle \overline{T}_1,\dots,\overline{T}_{m_2} \rangle \llangle \overline{U}_1, \dots, \overline{U}_{n_2} \rrangle
\]
with $\left| \overline{v}_i \right| \leq (1+\epsilon)\left| \overline{w}_i \right|_2$. Lift $\overline{v}_i$ to
\[
v'_i \in \bA\langle T_1,\dots,T_{m_2} \rangle \llangle U_1, \dots, U_{n_2} \rrangle^{r_1}
\]
in such a way that every monomial $\overline{\pi}^{h} T_1^{j_1} \cdots T_{m_2}^{j_{m_2}} U_1^{k_1}\cdots U_{n_2}^{k_{n_2}}$ with a zero coefficient in $\overline{v}_i$
is lifted to zero.
Then project $v'_i$ to $w'_i \in S_2^{r_1}$. The image of $w'_i$ in
$A_2$ equals $\overline{w}_i$, so $w'_i \equiv w_i \pmod{\varpi}$. We may thus put $w''_i = \varpi^{-1}(w_i - w'_i) \in S_2^{r_1}$ and lift $w''_i$ to 
\[
v''_i \in \bA\langle T_1,\dots,T_{m_2} \rangle\llangle U_1, \dots, U_{n_2} \rrangle^{r_1}.
\]
Put
\[
v_i = \varpi^{m_i}(v'_i + \varpi v''_i);
\]
then $\varpi^{-m_i} v_i$ projects to $\overline{v}_i$ in $k((\overline{\pi})) \langle \overline{T}_1,\dots,\overline{T}_{m_2} \rangle \llangle \overline{U}_1, \dots, \overline{U}_{n_2} \rrangle$ and to $w_i$ in $S_2$.

Note that the special form of $v'_i$ ensures that for all $r>0$,
\[
\left| v'_i \right|_r = \left| \overline{v}_i \right|^{r}.
\]
On the other hand, by Remark~\ref{R:Hadamard}, for $0 < r \leq r_1$,
\[
\left| v''_i \right|_r \leq \left| v''_i \right|_{r_1}^{r/r_1},
\]
and if $v'_i \neq 0$ then there exists $r_0>0$ such that for $0 < r \leq r_0$,
\[
\left| \varpi \right| \left| v_i'' \right|_{r_1}^{r/r_1} < \left| \overline{v}_i \right|^{r}.
\]
Consequently, for $0 < r \leq r_0$ we have
\begin{align*}
\left| v_i \right|_r &= \left| \varpi \right|^{m_i} \left| \overline{v}_i \right|^r \\ &\leq \left| \varpi \right|^{m_i} (1+\epsilon)^r \left| \overline{w}_i \right|_2^r.
\end{align*}
In case $m_i = 0$, then $\overline{w}_i = \overline{g}(\overline{x})$ and so
\[
\left| v_i \right|_r \leq (1+\epsilon)^r c_1^r \left| \overline{u}_i \right|^{tr} = (1+\epsilon)^r c_1^r \left| u_i \right|_{tr}.
\]
In case $m_i>0$, by choosing $r_0$ small enough
we can still ensure that for $0 < r \leq r_0$,
\[
\left| v_i \right|_r \leq (1+\epsilon)^r c_1^r \left| u_i \right|_{tr}.
\]
Of course this also holds if $v_i = 0$.

Now for general $x \in S_1^{tr}$ with $0 < r \leq r_0$,
choose $\tilde{x} \in \bA\langle T_1,\dots,T_{m_1} \rangle \llangle U_1,\dots,U_{n_1} \rrangle^{\dagger,tr}$
lifting $x$ with $\left| \tilde{x} \right|_{tr} \leq (1+\epsilon) \left| x \right|_{1,tr}$. Write
\[
\tilde{x} = \sum_{h,j_*,k_*} \tilde{x}_{h,j_1,\dots,j_{m_1},k_1,\dots,k_{n_1}} \pi^h T_1^{j_1} \cdots T_{m_1}^{j_{m_1}}
U_1^{k_1}\cdots U_{n_1}^{k_{n_1}};
\]
then $g(x)$ is the image in $S_2^{r}$ of
\[
\sum_{h,j_*,k_*} \tilde{x}_{h,j_1,\dots,j_{m_1},k_1,\dots,k_{n_1}} u_0^h u_1^{j_1} \cdots u_{m_1}^{j_{m_1}}
u_{m_1+1}^{k_1} \cdots u_{m_1+n_1}^{k_{n_1}}
\]
and for $0 < s \leq r$,
\begin{align*}
&\left| \tilde{x}_{h,j_1,\dots,j_{m_1},k_1,\dots,k_{n_1}} v_0^h v_1^{j_1} \cdots v_{m_1}^{j_{m_1}} v_{m_1+1}^{k_1}\cdots v_{m_1+n_1}^{k_{n_1}} \right|_{s} \\
&\leq 
\left| \tilde{x}_{h,j_1,\dots,j_{m_1},k_1,\dots,k_{n_1}} \right| (1+\epsilon)^s c^s \left| \pi^h T_1^{j_1} \cdots T_{m_1}^{j_{m_1}}
U_1^{k_1} \cdots U_{n_1}^{k_{n_1}} \right|_{ts} \\
&\leq (1+\epsilon)^s c_1^s \left| \tilde{x} \right|_{ts} \\
&\leq (1+\epsilon)^{2s} c_1^s \left| x \right|_{1,ts}.
\end{align*}
This yields the desired inequality with $c_2 = (1+\epsilon)^2 c_1$.
\end{proof}

\begin{cor} \label{C:dagger compatibility}
For any dagger morphism $g: S_1 \to S_2$, there exists $t>0$ such that for any presentations of $S_1, S_2$, there exists $r_0>0$ such that  $g(S_1^{tr}) \subseteq S_2^r$ for all $0 < r \leq r_0$.
\end{cor}
\begin{proof}
Using Remark~\ref{R:twist structure morphism2}, we may reduce to the case where $g$ is $\bA^\dagger$-linear, and in particular norm-scaling. We may then invoke Lemma~\ref{L:dagger compatibility} to conclude.
\end{proof}

The following corollary asserts that for a given dagger lift $S$ of a given semiaffinoid algebra $A$, any statements made about the subrings $S_i^r$ for ``sufficiently small $r$'' can be made independently of the choice of a presentation.
\begin{cor} \label{C:same norm}
Let $A$ be a semiaffinoid algebra over $k((\overline{\pi}))$
and let $S = S_1 = S_2$ be a dagger lift of $A$.
For $i=1,2$, 
let
\[
f_i: \bA\langle T_1,\dots,T_{m_i} \rangle\llangle U_1, \dots, U_{n_i} \rrangle^{\dagger} \to S_i
\]
be a presentation,
put 
\[
S_i^{r} = f_i(\bA\langle T_1,\dots,T_{m_i} \rangle\llangle U_1, \dots, U_{n_i} \rrangle^{\dagger,r}),
\]
and let
$\left| \bullet \right|_{i,r}$ be the quotient norm on $S_i^{r}$ induced from $\left| \bullet \right|_r$
via $f_i$.
Then there exist $r_0, c > 0$ such that for $0 < r \leq r_0$, $S_1^{r} = S_2^{r}$ and
\[
\left|x\right|_{1,r} \leq c^r \left|x\right|_{2,r},
\quad
\left|x\right|_{2,r} \leq c^r \left|x\right|_{1,r}
\qquad (x \in S_1^{r}).
\]
\end{cor}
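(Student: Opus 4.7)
The plan is to apply Lemma~\ref{L:dagger compatibility} twice to the identity map $g = \mathrm{id}_S \colon S \to S$, once with source presentation $f_1$ and target presentation $f_2$, and once with the roles reversed. Since $S_1 = S_2 = S$ and $A_1 = A_2 = A$, the reduction $\overline{g}$ is also the identity on $A$, so $\overline{g}(k((\overline{\pi}))) = k((\overline{\pi})) \subseteq A$.

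The first hypothesis to check is the multiplicativity of $\left|\bullet\right|_i$ on $k((\overline{\pi}))$. Because $A$ is by definition a Banach algebra over $k((\overline{\pi}))$ with its $\overline{\pi}$-adic norm, and each $\overline{f}_i$ is a bounded surjection of $k((\overline{\pi}))$-algebras from a ring on which the Gauss norm restricts to the $\overline{\pi}$-adic norm on $k((\overline{\pi}))$, the quotient norm $\left|\bullet\right|_i$ agrees with the $\overline{\pi}$-adic norm on this subfield; this is manifestly multiplicative. Moreover, $\left|\overline{g}(\overline{\pi})\right|_2 = |\overline{\pi}| = \omega$, so the exponent produced by Lemma~\ref{L:t-bounded} is $t = 1$.

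Applying Lemma~\ref{L:dagger compatibility} in the direction $f_1 \to f_2$ with $t=1$ yields constants $r_0, c > 0$ such that $S_{1,r} \subseteq S_{2,r}$ and $\left|x\right|_{2,r} \leq c^r \left|x\right|_{1,r}$ for $0 < r \leq r_0$ and $x \in S_{1,r}$. Swapping the roles of $f_1$ and $f_2$ and repeating yields analogous constants $r_0', c'$ with $S_{2,r} \subseteq S_{1,r}$ and $\left|x\right|_{1,r} \leq (c')^r \left|x\right|_{2,r}$. Replacing $r_0$ by $\min(r_0, r_0')$ and $c$ by $\max(c, c')$, both inclusions become equalities $S_{1,r} = S_{2,r}$ and both norm inequalities hold simultaneously on this common set, yielding the conclusion.

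I do not expect any serious obstacle: the real work has been absorbed into Lemma~\ref{L:dagger compatibility}, and the only delicate points are the two verifications above (multiplicativity of $\left|\bullet\right|_i$ on the base field and the computation $t=1$), both of which are automatic from the setup in which the norm on $A$ extends the $\overline{\pi}$-adic norm on $k((\overline{\pi}))$. The symmetric application of the lemma is the whole content of the corollary.
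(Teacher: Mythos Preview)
Your argument is correct and is exactly what the paper intends: the corollary is stated without proof, being an immediate application of Lemma~\ref{L:dagger compatibility} to $g=\mathrm{id}_S$ in each direction with $t=1$. The one hypothesis of that lemma you do not explicitly check is continuity of $g$ for the two weak topologies, but for the identity map this reduces to the (standard) equivalence of the two Banach norms $\left|\bullet\right|_1,\left|\bullet\right|_2$ on $A=S/\frakm S$, so it poses no difficulty.
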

\begin{proof}
Apply Lemma~\ref{L:dagger compatibility} to the identity map $g: S \to S$.
\end{proof}

We may further refine Lemma~\ref{L:dagger compatibility} to lift certain properties of ring homomorphisms.

\begin{lemma} \label{L:lift properties}
Let $g: S_1 \to S_2$ be a dagger morphism with reduction $\overline{g}$.
If $\overline{g}$ has one of the following properties:
\begin{enumerate}
\item[(a)]
injective;
\item[(b)]
surjective;
\item[(c)]
finite;
\item[(d)]
flat;
\item[(e)]
faithfully flat;
\item[(f)]
finite \'etale;
\end{enumerate}
then so does $g$. Moreover, if $g$ is norm-scaling with parameter $t$, then in cases (a)--(c) and (f),
for $r>0$ sufficiently small, the morphisms $S_1^{tr} \to S_2^r$ also have the claimed property.
\end{lemma}
\begin{proof}
By Remark~\ref{R:twist structure morphism2}, we may reduce to the case where $g$ is norm-scaling.
Retain notation as in Lemma~\ref{L:dagger compatibility}.
Suppose that $\overline{g}$ is injective. Then $\ker(g) \subseteq \bigcap_{n=1}^\infty \frakm^n S_1$,
but the latter equals 0 because $\frakm S_1$ is contained in the Jacobson radical of $S_1$
(Remark~\ref{R:Jacobson radical}). This proves (a).

Suppose that $\overline{g}$ is finite; we prove that $g$ is finite by a variant of the
proof of Lemma~\ref{L:dagger compatibility}.
Fix presentations
\[
f_i: \bA\langle T_1,\dots,T_{m_i} \rangle\llangle U_1, \dots, U_{n_i} \rrangle^{\dagger} \to S_i
\]
and use these to define subrings $S_i^{r}$ of $S_i$,
quotient norms $\left| \bullet \right|_{i,r}$ on $S_i^{r}$,
and quotient norms $\left| \bullet \right|_i$ on $A_i$.
Choose $r_0 > 0$ for which $S_2^{r_0}$ surjects onto $A_2$
and the conclusion of Lemma~\ref{L:dagger compatibility} holds for some $c>0$.
We can then find finitely many elements $w_1,\dots,w_m \in S_2^{r_0}$ whose images in $A_2$
generate $A_2$ as an $A_1$-module.
By the open mapping theorem 
\cite[\S I.3.3, Th\'eor\`eme~1]{bourbaki-evt}, 
the map $A_1^m \to A_2$ given by $(\overline{y}_1,\dots,\overline{y}_m) \mapsto
\overline{g}(\overline{y}_1) \overline{w}_1 + \cdots + \overline{g}(\overline{y}_m) \overline{w}_m$ is strict;
that is, 
there exists $c_1>0$ such that
for any $\overline{x} \in A_2$, there exist $\overline{y}_1,\dots,\overline{y}_m \in A_1$ with $\overline{g}(\overline{y}_1) \overline{w}_1 + \cdots + \overline{g}(\overline{y}_m) \overline{w}_m = \overline{x}$
and $\left| \overline{y}_1 \right|^t_1, \dots, \left| \overline{y}_m \right|^t_1 \leq c_1 \left| \overline{x} \right|_2$.

Fix $\epsilon > 0$. Given $x \in S_2^{r_0}$, we construct elements $y_{n,j}\in S_1^{t r_0}$ for $n=0,1,\dots$
and $j=1,\dots,m$ as follows.
Given $y_{0,j},\dots,y_{n-1,j}$  for which 
\[
x \equiv \sum_{i=0}^{n-1} \sum_{j=1}^m g(\varpi^i y_{i,j}) w_m \pmod{\varpi^n},
\]
put
\[
z = \varpi^{-n}\left(x - \sum_{i=0}^{n-1} \sum_{j=1}^m g(\varpi^i y_{i,j}) w_m\right)
 \in S_2^{r_0},
\]
and choose $y_{n,j} \in S_1^{r_0}$ with $\overline{g}(\overline{y}_1)\overline{w}_1 + \cdots + \overline{g}(\overline{y}_m)\overline{w}_m = \overline{z}$ and
$\left| y_{n,j} \right|_{1,r_0} \leq c_1^{r_0} (1+\epsilon)^{t r_0} \left| z \right|_{2,r_0}$.
By induction, we have 
\[
\left| \varpi^n y_{n,j} \right|_{1,t r_0} \leq c_1^{(n+1)r_0} (1+\epsilon)^{r_0} c^{r_0}
\max_j\{\left| w_j \right|_{2,r_0}\}.
\]
Put $y_j = \sum_{n=0}^\infty y_{n,j}$.
As in the proof of Lemma~\ref{L:dagger compatibility}, there exist $r_1 \in (0,r_0]$ and $c_2 > 0$
such that for $r \in (0, r_1]$, we have $y_j \in S_1^{r}$ and 
$\left| y_j \right|_{1,tr} \leq c_2^r \left| x \right|_{2,r}$. Since $S_2^{r_0}$ is dense in $S_2^{r_1}$,
it follows that $S_2^{r_1}$ is finite over $S_1^{t r_1}$. This proves (c); the same argument with $m=1$ proves (b) (or see Remark~\ref{R:finite generators}).

Suppose that $\overline{g}$ is flat. To check that $g$ is flat, it suffices to check after localizing
at a maximal ideal $I$ of $S_1$, which by Remark~\ref{R:Jacobson radical} must contain $\frakm S_1$.
Since the rings $S_i$ are noetherian by Lemma~\ref{L:noetherian}, it is enough to check flatness of the maps
of completed local rings, for which we easily deduce flatness from flatness modulo $\frakm$ plus the fact that
$\frakm$ is a principal ideal. This proves (d) and similarly (e).

One obtains (f) from (c) and (d) using the fact that a ring morphism $A \to B$ is finite \'etale if and only if $B$ is finite projective as a module over both $A$ (via the given morphism) and $B \otimes_A B$ (via the multiplication map).
To show that in addition $S_1^{tr} \to S_2^r$ is finite \'etale for $r$ sufficiently small,
note that this morphism is finite for $r$ sufficiently small by (c). Since $g$ is already known to be \'etale,
$S_1^{tr} \to S_2^r$ becomes \'etale after inverting some $x \in S_1^{tr}$; by taking $r$ sufficiently small, 
$x$ becomes invertible and the claim follows.
\end{proof}
\begin{cor} \label{C:lift isomorphism}
Any dagger morphism whose reduction is an isomorphism is itself an isomorphism.
\end{cor}

\begin{remark} \label{R:finite generators}
The proof of Lemma~\ref{L:lift properties}(c) gives a slightly stronger result: any finite collection of elements
of $S_2$ whose images in $A_2$ generate $A_2$ as an $A_1$-module themselves generate $S_2$ as an $S_1$-module.
This also follows directly from the statement of Lemma~\ref{L:lift properties}(c) using
Remark~\ref{R:Jacobson radical} and Nakayama's lemma.

By the same token, if $\overline{g}$ is finite flat, then any presentation of $A_2$ as a direct summand of a finite free $A_1$-module lifts to a presentation of $S_2$ as a direct summand of a finite free module;
and if $\overline{g}$ is finite \'etale, then any $A_1$-linear section of $A_2 \to A_2/A_1$ lifts to an $S_1$-linear section of $S_2 \to S_2/S_1$.
\end{remark}

\begin{remark}
Let $A$ be a semiaffinoid algebra over $k((\overline{\pi}))$
and let $S$ be a dagger lift of $A$. Then each semiaffinoid localization $A \to B$ gives rise to a dagger lift $T$ of $B$ equipped with a morphism $S \to T$; since $A \to B$ is flat, so then is $S \to T$ by Lemma~\ref{L:lift properties}(d).
\end{remark}

\section{Multivariate Robba rings}
\label{sec:multivariate}

\begin{defn} \label{D:Robba ring}
Let $A$ be a semiaffinoid algebra over $k((\overline{\pi}))$, and fix a dagger lift $S$ of $A$ and a presentation of $S$.
For $r>0$, let $\calR_A^r$ be the Fr\'echet completion of $S^r[\varpi^{-1}]$ for the norms
$\left| \bullet \right|_s$ for $s \in (0,r]$. Put $\calR_A = \bigcup_{r>0} \calR_A^r$.
Let $\calR_A^{\inte}$ be the subring of $\calR_A$ consisting of those elements
$x$ for which $\limsup_{r \to 0^+} \left| x \right|_r \leq 1$;
it carries a natural LF (limit of Fr\'echet) topology.
Put $\calR_A^{\bd} = \calR_A^{\inte}[p^{-1}]$.
\end{defn}

\begin{lemma} \label{L:dagger to integral Robba}
In Definition~\ref{D:Robba ring}, the natural map $S \to \calR_A^{\inte}$
is an isomorphism.
\end{lemma}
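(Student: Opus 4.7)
The plan is to establish injectivity and surjectivity of the map $S \to \calR_A^{\inte}$ separately. First observe that the image lies in $\calR_A^{\inte}$ thanks to Remark~\ref{R:Hadamard}: for $x \in S_r$, $|x|_s \leq |x|_r^{s/r} \to 1$ as $s \to 0^+$.

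For injectivity, fix a presentation $f: \bA\langle T_1,\dots,T_m\rangle\langle\langle U_1,\dots,U_n\rangle\rangle^\dagger \to S$, and suppose $x \in S_r$ maps to zero in $\calR_A^r$, so the quotient seminorm $|x|_s$ vanishes for every $s \in (0, r]$. The ring $\bA\langle T_1,\dots,T_m\rangle\langle\langle U_1,\dots,U_n\rangle\rangle^{\dagger,s}$ is a noetherian Banach algebra, so the ideal $I^s := (\ker f) \cap \bA\langle T_1,\dots,T_m\rangle\langle\langle U_1,\dots,U_n\rangle\rangle^{\dagger,s}$ is finitely generated and hence closed in the $|\bullet|_s$-topology. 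Thus $|\bullet|_s$ is a genuine norm on $S_s$, and $|x|_s = 0$ forces $x = 0$ in $S_s$, and therefore in $S_r \hookrightarrow S_s$.

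For surjectivity, I would first verify the free case $S = \bA\langle T_1,\dots,T_m\rangle\langle\langle U_1,\dots,U_n\rangle\rangle^\dagger$: in this case, any element of $\calR_A^r$ admits a well-defined power series expansion with coefficients in $\frako[p^{-1}]$ since the Gauss norms separate monomials, and $\limsup_{s\to 0^+}|x|_s$ equals the supremum of the coefficient norms. The condition $\limsup|x|_s \leq 1$ therefore forces all coefficients into $\frako$, yielding $x \in \bA\langle T_1,\dots,T_m\rangle\langle\langle U_1,\dots,U_n\rangle\rangle^{\dagger,r}$. In the general case, lift $\bar x \in \calR_A^{\inte} \subseteq \calR_A^r$ to $\tilde x$ in the Fréchet completion of $\bA\langle T_1,\dots,T_m\rangle\langle\langle U_1,\dots,U_n\rangle\rangle^{\dagger,r}[p^{-1}]$ (this completion surjects onto $\calR_A^r$ with closed kernel $J^r := \overline{I^r[p^{-1}]}$), and modify $\tilde x$ by $J^r$ via successive approximation: choose a decreasing sequence $s_n \to 0^+$ and inductively select $j_n \in J^r$ so that $|\tilde x + j_1 + \cdots + j_n|_{s_n} \leq 1 + 2^{-n}$, using the log-convexity estimate of Remark~\ref{R:Hadamard} to ensure each $j_n$ remains small in the previously-controlled norms $|\bullet|_{s_m}$ for $m < n$. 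The resulting Fréchet limit is a lift of $\bar x$ with $\limsup_{s\to 0^+}|\bullet|_s \leq 1$, hence by the free case lies in $\bA\langle T_1,\dots,T_m\rangle\langle\langle U_1,\dots,U_n\rangle\rangle^\dagger$; its image in $S$ is the desired preimage.

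The main obstacle is the successive approximation argument: each $j_n$ must both bring $|\tilde x + j_1 + \cdots + j_n|_{s_n}$ close to the infimum $|\bar x|_{s_n}$ \emph{and} have small $|\bullet|_{s_m}$-norm for every $m < n$. Remark~\ref{R:Hadamard}'s log-convexity estimate $|x|_{r^t s^{1-t}} \leq |x|_r^t |x|_s^{1-t}$ is the essential tool here: it constrains the seminorms across the interval $(0, r]$, so that control established at one radius propagates to nearby radii and the successive corrections converge to an admissible limit.
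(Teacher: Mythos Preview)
Your surjectivity argument in the general case has a real gap, and it is exactly the obstacle you flag at the end. Log-convexity is an \emph{interpolation} inequality: to bound $|j_n|_{s_m}$ for $s_n < s_m < r$ you need control at \emph{two} endpoints, say $|j_n|_{s_n}$ and $|j_n|_r$. Your construction only pins down $|j_n|_{s_n}$ (it must cancel the current error at radius $s_n$), and says nothing about $|j_n|_r$. Without an a priori bound on $|j_n|_r$, the estimate $|j_n|_{s_m} \leq |j_n|_r^t |j_n|_{s_n}^{1-t}$ is vacuous, and nothing prevents the successive corrections from destroying the bounds already achieved at larger radii. One would need to show that the kernel $J^r$ contains, for every target correction at radius $s_n$, an element whose $|\bullet|_r$-norm is uniformly controlled; this is a nontrivial strictness statement about the family of quotient norms that does not follow from Remark~\ref{R:Hadamard} alone. (A smaller point: you invoke noetherianity of $\bA\langle T\rangle\langle\langle U\rangle\rangle^{\dagger,s}$ for injectivity, but Lemma~\ref{L:noetherian} only treats the union over $s$.)

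The paper sidesteps the whole approximation problem. Given $x \in \calR_A^{\inte}$, it simply enlarges $S$ to a dagger lift $S'$ of $A$ inside $\calR_A^{\inte}$ containing both $x$ and the image of $S$ (adjoin $x$ as an extra generator in the presentation). Since $S \to S'$ reduces to the identity on $A$, Corollary~\ref{C:lift isomorphism} forces $S \to S'$ to be an isomorphism, so $x$ was already in $S$. This uses the structural machinery of \S\ref{sec:dagger lifts} (specifically Lemma~\ref{L:lift properties}) in place of any explicit norm estimate, and is both shorter and more robust than a direct analytic argument.
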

\begin{proof}
For $r>0$, $\calR^r_A$ is constructed as the Fr\'echet completion of $S^r[\varpi^{-1}]$
with respect to a family of seminorms including the norm $\left| \bullet \right|_r$;
consequently, the map $S^r \to \calR^r_A$ is injective. By taking direct limits, we see that $S \to \calR_A^{\inte}$ is injective. 

To prove that $S \to \calR_A^{\inte}$ is surjective, it suffices to check that an arbitrary nonzero element $x \in \calR_A^{\inte}$ belongs to its image.
(We take this approach because we do not yet have any finiteness property for $\calR_A^{\inte}$.)
Fix a presentation 
$f: \bA\langle T_1,\dots,T_{m} \rangle\llangle U_1, \dots, U_{n} \rrangle^{\dagger} \to S$;
since $\varpi x$ is power-bounded in $\calR_A^{\inte}$, 
we can construct a dagger lift $S'$ of $A$ inside $\calR_A^{\inte}$ containing $\varpi x$  
by extending $f$ to a presentation
$f': \bA\langle T_1,\dots,T_{m},T_{m+1} \rangle\llangle U_1, \dots, U_{n} \rrangle^{\dagger} \to S$
taking $T_{m+1}$ to $\varpi x$.
The map $S \to S'$ is then an isomorphism by Corollary~\ref{C:lift isomorphism}, so $\varpi x$ lifts to an element $y \in S$.
We may now apply Remark~\ref{R:lim norm} to see that $\lim_{r \to 0^+} \left| \varpi x \right|_r = \left| \varpi \right|^m$ where $m$ is the largest integer such that $y \in S$; since $\limsup_{r \to 0^+} \left| x \right|_r \leq 1$ we must have $m \geq 1$, so $x$ is the image of the element $\varpi^{-1} y$ of $S$.
\end{proof}

\begin{lemma} \label{L:units in integral}
We have $\calR_A^{\inte} \cap (\calR_A^{\bd})^\times = \bigcup_{m=0}^\infty \varpi^m (\calR_A^{\inte})^\times$.
\end{lemma}
\begin{proof}
Suppose that $x \in \calR_A^{\inte}$ admits the inverse $y \in \calR_A^{\bd}$. 
Let $m$ be the largest integer such that $x \in \varpi^m \calR_A^{\inte}$;
by Lemma~\ref{L:dagger to integral Robba} plus Remark~\ref{R:lim norm}, 
we have $\lim_{r \to 0^+} \left| \varpi^{-m} x \right|_r = 1$
and hence $\lim_{r \to 0^+} \left| \varpi^{m} y \right|_r = 1$.
It follows that $\varpi^m y \in \calR_A^{\inte}$ and so
$\varpi^{-m} x \in (\calR_A^{\inte})^{\times}$.
\end{proof}

\begin{defn} \label{D:Robba functoriality}
For $i=1,2$, let $A$ be a semiaffinoid algebra over $k((\overline{\pi}))$ 
and let $S_i$ be a dagger lift of $A$.
Let $g: S_1 \to S_2$ be a dagger morphism. 
If $g$ is norm-scaling with parameter $t$ as per Lemma~\ref{L:dagger compatibility}, 
then for $r>0$ sufficiently small the morphisms
$S_1^{tr} \to S_2^r$ induce morphisms $\calR^{tr}_{A_1} \to \calR^r_{A_2}$, and hence a morphism $\calR_{A_1} \to \calR_{A_2}$. The latter conclusion holds also for general $g$
by Remark~\ref{R:twist structure morphism2}.

In particular, if $\overline{g}$ is an isomorphism, then so is $g$ by Corollary~\ref{C:lift isomorphism}. This means that, while the construction of $\calR_A$ is not functorial in the ring $A$, it is functorial in the dagger lift $S$.
\end{defn}

\begin{lemma} \label{L:Robba properties}
With notation as in Definition~\ref{D:Robba functoriality}, 
if $\overline{g}$ is surjective (resp.\ finite), then so is the induced map $\calR_{A_1} \to \calR_{A_2}$.
\end{lemma}
\begin{proof}
By Remark~\ref{R:twist structure morphism2}, we may reduce to the case where $g$ is norm-scaling with parameter $t$.
Suppose that $\overline{g}$ is surjective.
By Lemma~\ref{L:lift properties}, for $r>0$ sufficiently small, the morphism $S_1^{tr} \to S_2^r$ is surjective, and hence strict surjective by the open mapping theorem.
By Lemma~\ref{L:dagger to integral Robba}, $g$ may be reinterpreted as the morphism $\calR_{A_1}^{\inte} \to \calR_{A_2}^{\inte}$, so for $r>0$ sufficiently small,
the morphism $\calR^{tr}_{A_1} \to \calR^r_{A_2}$ is surjective. It follows that $\calR_{A_1} \to \calR_{A_2}$ is surjective. If $\overline{g}$ is finite, a similar argument
shows that $\calR_{A_1} \to \calR_{A_2}$ is finite.
\end{proof}
\begin{cor} \label{C:Robba base change}
With notation as in Definition~\ref{D:Robba functoriality}, 
if $\overline{g}$ is finite, then
the morphism $\calR_{A_1} \otimes_{\calR_{A_1}^{\inte}} \calR_{A_2}^{\inte} \to \calR_{A_2}$ is an isomorphism.
\end{cor}

\begin{lemma} \label{L:Robba flat}
With notation as in Definition~\ref{D:Robba ring}, the morphism $\calR^{\inte}_A \to \calR_A$ is flat, and the morphism $\calR^{\bd}_A \to \calR_A$ is faithfully flat.
\end{lemma}
\begin{proof}
Using Corollary~\ref{C:Robba base change}, we reduce to the case where
\[
A = k((\overline{\pi})) \langle \overline{T}_1,\dots,\overline{T}_m \rangle
\llangle \overline{U}_1,\dots,\overline{U}_n \rrangle, \qquad
S = \bA \langle T_1,\dots,T_m \rangle \llangle U_1,\dots,U_n \rrangle.
\]
Let $R$ be the $(\frakm, \pi)$-adic completion of $\frako[\pi,T_1,\dots,T_m]\llbracket U_1,\dots,U_n \rrbracket$. 
For $r = s/t \in \QQ_{>0}$, the map $S^r \to \calR^r_A$ is flat because it can be obtained by 
taking the flat morphism $R[\varpi^s \pi^{-t}] \to R[\varpi^s \pi^{-t}, \varpi^{-s} \pi^t]$ of noetherian rings, taking $\varpi$-adic completions (which preserves flatness by
Lemma~\ref{L:flat completion}), and then inverting $\pi$.
By taking direct limits, we deduce that $\calR^{\inte}_A \to \calR_A$ is flat,
which implies immediately that $\calR^{\bd}_A \to \calR_A$ is flat.

Consequently, to check that $\calR^{\bd}_A \to \calR_A$ is faithfully flat, in light of \cite[Tag~00HQ]{stacks-project}
it remains to check  that every maximal ideal $I \in \Spec \calR^{\bd}_A$ generates a nontrivial ideal in $\calR_A$.
To see this, note that $J = I \cap \calR^{\inte}_A$ is a maximal ideal
not contained in $\frakm \calR^{\inte}_A$. Let $\overline{J}$ be the image of $J$ in $A$; by Lemma~\ref{L:mixed Nullstellensatz}, $A/J$ is a finite extension of $k((\overline{\pi}))$. But $\calR^{\bd}_{A/J}$ is a field, so $\calR^{\bd}_{A/J} \to \calR_{A/J}$ is evidently faithfully flat. This completes the proof.
\end{proof}

\begin{cor} \label{C:same units}
The map $(\calR_A^{\bd})^{\times}\to \calR_A^\times$ is an isomorphism.
\end{cor}
\begin{proof}
By Lemma~\ref{L:Robba flat} the map in question is injective,
so we need only check surjectivity.
Suppose that $x,y \in \calR_A^\times$ are inverses of each other. By 
Remark~\ref{R:Hadamard}, the functions $s \mapsto \log \left| x \right|_s$, $s \mapsto \log \left| y \right|_s$
are both convex, but their sum is zero; hence they must both be constant. It follows that
$x,y \in \calR_A^{\bd}$.
\end{proof}

\begin{cor} \label{C:bounded subring is characteristic}
For $i=1,2$, let $A$ be a semiaffinoid algebra over $k((\overline{\pi}))$ 
and let $f: \calR_{A_1} \to \calR_{A_2}$ be a ring homomorphism (not necessarily constructed as in Definition~\ref{D:Robba functoriality}).
\begin{enumerate}
\item[(a)]
The map $f$ carries $\calR_{A_1}^{\bd}$ into $\calR_{A_2}^{\bd}$.
\item[(b)]
If $f$ is continuous for the LF topologies, then $f$ carries $\calR_{A_1}^{\inte}$ into $\calR_{A_2}^{\inte}$.
In particular, this induced map is a dagger morphism which gives rise to $f$ as in Definition~\ref{D:Robba functoriality}.
\end{enumerate}
\end{cor}
\begin{proof}
By Corollary~\ref{C:same units}, $f$ carries $(\calR_{A_1}^{\bd})^\times$ into  $(\calR_{A_2}^{\bd})^\times$.
For any $x \in \calR_A^{\bd}$, for $n$ a sufficiently large integer, both $x+\varpi^{-n}$ is a unit in
$\calR_{A_1}^{\bd}$; consequently, $f(x+\varpi^{-n})$ and $f(\varpi^{-n})$  are units in
$\calR_{A_2}^{\bd}$, and their difference $f(x)$ thus belongs to $\calR_{A_2}^{\bd}$. This proves (a).
 
Suppose now that $f$ is continuous for the LF topologies.
Let $x \in \calR_{A_1}^{\inte}$ be an element whose image in $A_1$ is a topologically nilpotent unit.
Then for each positive integer $n$, $x^{-n} \varpi$ is topologically nilpotent and so $f(x)^{-n} f(\varpi)$ is also.
Since $f(x)$ is topologically nilpotent, it belongs to $\calR_{A_2}^{\inte}$; by the previous sentence, it cannot belong to $\varpi \calR_{A_2}^{\inte}$. By Lemma~\ref{L:units in integral}, $f(x)$ must be a unit in $\calR_{A_2}^{\inte}$.

Now let $y \in \calR_{A_1}^{\inte}$ be arbitrary. For $n$ a sufficiently large integer, $x^{-n} y$ is again topologically nilpotent, so $f(x)^{-n} f(y) \in \calR_{A_2}^{\inte}$. Since $f(x)$ is a unit in this ring,
$f(y) \in \calR_{A_2}^{\inte}$ also; this proves (b).
\end{proof}

\begin{defn} \label{D:Robba truncate}
With notation as in Definition~\ref{D:Robba ring}, 
for $0 < s \leq r$, let $\calR^{[s,r]}_A$ be the Fr\'echet completion of $\calR^{\inte}_A[\varpi^{-1}]$ for the seminorms $\left| \bullet \right|_t$ for $t \in [s,r]$. By Remark~\ref{R:Hadamard}, $\calR^{[s,r]}_A$ is actually a Banach ring for the norm $\max\{\left| \bullet \right|_s, \left| \bullet \right|_r\}$.
In case $r,s \in \QQ$, the ring $\calR^{[s,r]}_A$ is itself a semiaffinoid algebra over $K$,
and in particular is noetherian.

For $0 < s \leq s' \leq r' \leq r$ with $r,s,r',s' \in \QQ$, the map $\calR_A^{[s,r]} \to \calR_A^{[s',r']}$ is a semiaffinoid subdomain localization, and hence is flat 
(see Definition~\ref{D:semiaffinoid spaces}).
This means that $\calR_A^r$ is a \emph{Fr\'echet-Stein algebra} in the sense of 
Schneider--Teitelbaum \cite{schneider-teitelbaum}; in particular,
for $0 < s \leq r$ with $r,s \in \QQ$, 
the homomorphism $\calR_A^r \to \calR_A^{[s,r]}$ is flat by \cite[Remark~4.2]{schneider-teitelbaum}.

Note that the definition of the rings $\calR^{[s,r]}_A$ depends on the choice of a presentation of $S$, but there is a sense in which may compare the definition across presentations: in light of Corollary~\ref{C:same norm}, given two such presentations we can find some $r_0 > 0$ such that for $0 < s\leq r \leq r_0$, the rings $\calR^{[s,r]}_A$ arising from the two presentations coincide.
\end{defn}

\begin{lemma} \label{L:intersection}
With notation as in Definition~\ref{D:Robba truncate}, 
for $0 < t \leq s \leq r$, inside $\calR^{[s,s]}_A$ we have
\[
\calR^{[t,s]}_A \cap \calR^{[s,r]}_A = \calR^{[t,r]}_A.
\]
\end{lemma}
\begin{proof}
We first apply Remark~\ref{R:Hadamard} to see that 
the maps $\calR^{[t,s]}_A \to \calR^{[s,s]}_A$,
$\calR^{[s,r]}_A \to \calR^{[s,s]}_A$ are injective.
Explicitly, if $x \in \calR^{[t,s]}_A$ satisfies $\left| x \right|_s = 0$,
then Remark~\ref{R:Hadamard} implies that $\left| x \right|_u = 0$ for all $u \in (t,s]$,
hence also for $u=t$ by continuity, thus yielding $x = 0$.

To check the claimed inequality, we may argue as in \cite[Lemma~5.2.10]{kedlaya-liu1}
once we show that elements of $\calR^{[s,s]}_A$ can be decomposed in the manner of \cite[Lemma~5.2.8]{kedlaya-liu1}. To achieve the latter, we may reduce to the case $A = k((\overline{\pi})) \langle \overline{T}_1,\dots,\overline{T}_m \rangle \llangle \overline{U}_1, \dots, \overline{U}_n \rrangle$; in that setting, elements of $\calR^{[s,s]}_A$ may be written as formal sums in powers of $\pi$, and the desired decomposition is obtained by separating these sums depending on the sign of the exponent.
\end{proof}

\begin{prop} \label{P:coherent glueing}
Let $\Mod^{\fin}_R$ denote the category of finitely generated modules over the ring $R$.
With notation as in Definition~\ref{D:Robba truncate}, 
for $0 < t \leq s \leq r$, the functor
\[
\Mod^{\fin}_{\calR^{[t,r]}_A} \to \Mod^{\fin}_{\calR^{[t,s]}_A}
\times_{\Mod^{\fin}_{\calR^{[s,s]}_A}} \Mod^{\fin}_{\calR^{[s,r]}_A}
\]
is an equivalence of categories.
\end{prop}
\begin{proof}
For $A$ an affinoid algebra, this follows from Kiehl's theorem on coherent sheaves on affinoid spaces
\cite{kiehl}.
In the general case, this argument is not available (see Remark~\ref{R:Kiehl}).
However, by Lemma~\ref{L:intersection}, the diagram
\[
\xymatrix{
\calR^{[t,r]}_A \ar[r] \ar[d] & \calR^{[t,s]}_A \ar[d] \\
\calR^{[s,r]}_A \ar[r] & \calR^{[s,s]}_A
}
\]
forms a \emph{glueing square} in the sense of \cite[Definition~2.7.3]{kedlaya-liu1}.
Consequently, \cite[Lemma~2.7.4]{kedlaya-liu1} implies that for any object
\[
M^{[t,s]} \rightarrow M^{[s,s]} \leftarrow M^{[s,r]} \in \Mod^{\fin}_{\calR^{[t,s]}_A}
\times_{\Mod^{\fin}_{\calR^{[s,s]}_A}} \Mod^{\fin}_{\calR^{[s,r]}_A},
\]
the module
\[
M = \ker(M^{[t,s]} \oplus M^{[s,r]} \to M^{[s,s]})
\]
is finitely generated and the induced maps 
\[
M \otimes_{\calR^{[t,r]}_A} \calR_A^{[t,s]} \to M^{[t,s]}, \qquad
M \otimes_{\calR^{[t,r]}_A} \calR_A^{[s,r]} \to M^{[s,r]}
\]
are surjective. Since semiaffinoid localizations are known to be flat, 
taking the kernels of these maps gives rise to another object in the fiber product category, so we may repeat the argument once to deduce the claim (as in the proof of \cite[Lemma 2.3.2]{kedlaya-liu2}).
\end{proof}

\begin{remark}
If $A$ is an affinoid algebra, then one can also use Kiehl's theorem to glue finitely generated $\calR^{[s,r]}_A$-modules over a covering of $\sSp A$.
In the semiaffinoid case, this is not possible on account of
Remark~\ref{R:Kiehl}; it may still be possible to glue finite projective modules,
but the arguments of \cite{kedlaya-liu1} on this topic do not suffice for this.
\end{remark}

\section{Frobenius lifts and $\varphi$-modules}
\label{sec:Frobenius}

\begin{hypothesis} \label{H:Frobenius}
For the remainder of the paper, assume that $k$ is perfect of characteristic $p>0$.
Let $A$ be a nonzero semiaffinoid algebra over $k((\overline{\pi}))$, and fix a dagger lift $S$ of $A$. Let $\calE^{\inte}_A$ denote the $\varpi$-adic completion of $S$,
and put $\calE_A = \calE^{\inte}_A[\varpi^{-1}]$.
\end{hypothesis}

\begin{defn} \label{D:relative Frobenius}
Let $q$ be a power of $p$.
By a \emph{relative $q$-power Frobenius map} on $A$,
we will mean a continuous endomorphism $\overline{\varphi}$ of $A$ which is finite flat
and has the property that $\overline{\pi}^{-q} \overline{\varphi}(\overline{\pi})$ and its inverse are both power-bounded.
\end{defn}

\begin{remark} \label{R:Kunz}
Suppose that $\overline{\varphi}$ is the absolute $q$-power Frobenius endomorphism of $A$.
Then $\overline{\varphi}$ is a relative $q$-power Frobenius map in the sense of
Definition~\ref{D:relative Frobenius} if and only if it is finite flat.
By Kunz's criterion \cite{kunz}, this is true if and only if $A$ is regular;
in particular, this holds if $A$ is a smooth affinoid algebra over $k((\overline{\pi}))$ \cite[\S 7.3.2]{bgr}.
\end{remark}

\begin{remark}
Any relative $q$-power Frobenius map on $A$ extends uniquely to a relative $q$-power Frobenius map on $A \langle \overline{T}_1,\dots,\overline{T}_m \rangle \llangle \overline{U}_1,\dots,\overline{U}_n \rrangle$ fixing $\overline{T}_1,\dots,\overline{T}_m,\overline{U}_1,\dots,\overline{U}_n$.
\end{remark}

\begin{defn} \label{D:Frobenius lift}
By a \emph{relative $q$-power Frobenius lift} on $\calR_A$, we will
mean an endomorphism $\varphi$ of $\calR_A$ which is continuous for the LF topology
and acts on $\calR^{\inte}_A$ in such a way that the induced action on $A$
is a relative $q$-power Frobenius map in the sense of Definition~\ref{D:relative Frobenius}. In case the latter is the $q$-power (absolute) Frobenius map (which by Remark~\ref{R:Kunz} requires $A$ to be regular), we say that $\varphi$ is an 
\emph{absolute $q$-power Frobenius lift}.
Note that the action on $\calR_A^{\inte}$ of a relative $q$-power Frobenius lift 
on $\calR_A$ is norm-scaling with parameter $q$.
\end{defn}

\begin{remark} \label{R:Frobenius flat}
By Lemma~\ref{L:lift properties}, any relative $q$-power Frobenius lift $\varphi$ on $\calR_A$ is itself a finite flat ring homomorphism. Moreover, given a presentation of $S$, we can choose some $r_0 > 0$ such that for $0 < s \leq r \leq r_0$, $\varphi$ induces morphisms
$\calR^r_A \to \calR^{r/q}_A$, $\calR^{[s,r]}_A \to \calR^{[s/q,r/q]}_A$.
These morphisms are in fact isometric for $\left| \bullet \right|_r$ on the source and $\left|\bullet \right|_{r/q}$ on the target.
\end{remark}

\section{Perfect Robba rings}
\label{sec:perfect}

Just as the classical theory of $(\varphi, \Gamma)$-modules has been greatly elucidated by modern insights emerging from the theory of perfectoid spaces (see \cite{kedlaya-newphigamma}), it is helpful for certain purposes to make the ``perfect closure'' of a Robba ring equipped with a Frobenius lift.

\begin{hypothesis} \label{H:absolute Frobenius lift}
Throughout \S\ref{sec:perfect},
suppose that $A$ is regular.
Fix a power $q$ of $p$ and a relative $q$-power Frobenius lift $\varphi$ on $\calR_A$,
and fix $r_0>0$ as in Remark~\ref{R:Frobenius flat}.
\end{hypothesis}

\begin{defn} \label{D:perfection}
Let $\varphi$ be a relative $q$-power Frobenius lift on $A$ for some power $q$ of $p$.
Let $\breve{\calE}^{\inte}_A$ be the direct limit of the system
\[
\calE^{\inte}_A \stackrel{\varphi}{\to} \calE^{\inte}_A \stackrel{\varphi}{\to} \cdots.
\]
Let $\tilde{\calE}^{\inte}_A$ be the completion of $\breve{\calE}^{\inte}_A$ for the weak topology.
Put $\breve{\calE}_A := \breve{\calE}^{\inte}_A[\varpi^{-1}]$, $\tilde{\calE}_A := \tilde{\calE}_A^{\inte}[\varpi^{-1}]$.

For $r \in (0,r_0]$, by Corollary~\ref{C:dagger compatibility} we may form the direct system
\[
\calR_A^{\inte,r} \stackrel{\varphi}{\to} \calR_A^{\inte,r/q} \stackrel{\varphi}{\to} \calR_A^{\inte,r/q^2} \stackrel{\varphi}{\to} \cdots
\]
and these maps are isometries for the norm $\left| \bullet \right|_{r/q^n}$ on $\calR_A^{\inte,r/q^n}$. 
Let $\breve{\calR}_A^{\inte,r}$ be the direct limit,
let $\tilde{\calR}_A^{\inte,r}$ be the completion of $\breve{\calR}_A^{\inte,r}$, and 
put 
\begin{gather*}
\breve{\calR}_A^{\inte} := \bigcup_{r>0} \breve{\calR}_A^{\inte,r}, \qquad
\tilde{\calR}_A^{\inte} := \bigcup_{r>0} \tilde{\calR}_A^{\inte,r}, \\
\breve{\calR}_A^{\bd} := \breve{\calR}_A^{\inte}[\varpi^{-1}], \qquad
\tilde{\calR}_A^{\bd} := \tilde{\calR}_A^{\inte}[\varpi^{-1}].
\end{gather*}

Similarly, for $r \in (0,r_0]$, we may form the direct system
\[
\calR_A^{r} \stackrel{\varphi}{\to} \calR_A^{r/q} \stackrel{\varphi}{\to} \calR_A^{r/q^2} \stackrel{\varphi}{\to} \cdots
\]
and for each $s \in (0,r]$, these maps are isometries for the norm $\left| \bullet \right|_{s/q^n}$ on $\calR_A^{\inte,r/q^n}$. 
Let $\breve{\calR}_A^{r}$ be the direct limit,
Let $\tilde{\calR}_A^{r}$ be the Fr\'echet completion of $\breve{\calR}_A^{r}$ for the induced norms
and
put 
\[
\breve{\calR}_A := \bigcup_{r>0} \breve{\calR}_A^{r}, \qquad
\tilde{\calR}_A := \bigcup_{r>0} \tilde{\calR}_A^{r}
\]
\end{defn}

\begin{remark} \label{R:Jacobson radical tilde}
By Remark~\ref{R:Jacobson radical}, $\varpi$ is contained in the Jacobson radical of $\calR_A^{\inte}$.
By similar logic, $\varpi$ is also contained in the Jacobson radical of $\tilde{\calR}_A^{\inte}$.
\end{remark}

\begin{lemma} \label{L:Robba split}
The following statements hold.
\begin{enumerate}
\item[(a)]
For $r\in(0,r_0]$, the morphisms 
\[
\calR_A^{\inte,r} \to \tilde{\calR}_A^{\inte,r}, \qquad \calR^r_A \to \tilde{\calR}^r_A
\]
are faithfully flat and module-split.
\item[(b)]
The morphisms 
\[
\calR_A^{\inte} \to \tilde{\calR}_A^{\inte}, \quad 
\calR_A^{\bd} \to \tilde{\calR}_A^{\bd}, \quad 
\calR_A \to \tilde{\calR}_A, \quad \calE^{\inte}_A \to \tilde{\calE}^{\inte}_A, \quad \calE_A \to \tilde{\calE}_A
\]
are faithfully flat and module-split.
\item[(c)]
The morphisms
\[
\calR_A^{\inte} \to \calE_A^{\inte}, \breve{\calR}_A^{\inte} \to \breve{\calR}_A^{\inte},
\tilde{\calR}_A^{\inte} \to \tilde{\calE}_A^{\inte},
\calR_A^{\bd} \to \calE_A^{\bd},
\breve{\calR}_A^{\bd} \to \breve{\calE}_A^{\bd},
\tilde{\calR}_A^{\bd} \to \tilde{\calE}_A^{\bd}
\]
are faithfully flat.
\end{enumerate}
\end{lemma}
\begin{proof}
By Lemma~\ref{L:lift properties} and Remark~\ref{R:finite generators}, 
$\varphi: \calR_A^{\inte,r} \to \calR_A^{\inte,r/q}$ is module-split and its cokernel
can be written as a direct summand of a finite free $\calR_A^{\inte,r}$-module via $\varphi$.
By iterating this construction, we see that $\tilde{\calR}_A^{\inte,r}$ can be written as a direct 
summand of a module over $\calR_A^{\inte,r}$ which is a completion of a free module, and in particular is faithfully flat; we also obtain a module splitting of $\calR_A^{\inte,r} \to \tilde{\calR}_A^{\inte,r}$. 
A similar argument applies to $\calR_A^r \to \tilde{\calR}_A^r$. This yields (a). 

For (b), the first two assertions follow at once from (a); the last two follow by a similar argument as in (a).

For (c), note that in the cases $\calR_A^{\inte} \to \calE_A^{\inte}, \tilde{\calR}_A^{\inte} \to \tilde{\calE}_A^{\inte}$, we are forming the completion with respect to the finitely generated ideal $(\varpi)$ which is in the Jacobson radical (Remark~\ref{R:Jacobson radical tilde}), so the resulting map is faithfully flat.
This implies that $\calR_A^{\bd} \to \calE_A^{\bd}$, $\tilde{\calR}_A^{\bd} \to \tilde{\calE}_A^{\bd}$ are faithfully flat by inverting $\varpi$, and that $\breve{\calR}_A^{\inte} \to \breve{\calR}_A^{\inte}$,
$\breve{\calR}_A^{\bd} \to \breve{\calE}_A^{\bd}$ are faithfully flat by taking direct limits.
\end{proof}

\begin{lemma} \label{L:descend submodule}
Let $R_1 \to R_2$ be one of the following morphisms:
\[
\calE^{\inte}_A \to \tilde{\calE}^{\inte}_A,
\quad \calE_A \to \tilde{\calE}_A,
\quad \calR^{\inte}_A \to \tilde{\calR}^{\inte}_A,
\quad \calR_A^{\bd} \to \tilde{\calR}_A^{\bd},
\quad \calR_A \to \tilde{\calR}_A.
\]
Let $M$ be a finite projective module over $R_1$, put
$\tilde{M} = M \otimes_{R_1} R_2$, and let
$\tilde{N}$ be a direct summand of $\tilde{M}$.
Then the intersection $N = \tilde{N} \cap M$ within $\tilde{M}$
 is a direct summand of $M$. In particular, $N$ is a finite projective module over $R_1$ and the induced map
$N \otimes_{R} R_2 \to \tilde{N}$ is an isomorphism.
\end{lemma}
\begin{proof}
From Lemma~\ref{L:Robba split}, we obtain an $R_1$-linear splitting of the inclusion $R_1 \hookrightarrow R_2$. By tensoring with $M$, we obtain a splitting $\tilde{M} \to M$ of the inclusion $M \hookrightarrow \tilde{M}$; this splitting in turn restricts to a splitting $\tilde{N} \to N$ of the inclusion $N \to \tilde{N}$. 

Since $\tilde{N}$ is a direct summand of $\tilde{M}$, there exists an $R_2$-linear splitting $\tilde{M} \to \tilde{N}$ of the 
inclusion $\tilde{N} \hookrightarrow \tilde{M}$.
The composition $M \to \tilde{M} \to \tilde{N} \to N$
is then an $R_1$-linear splitting of the inclusion $N \hookrightarrow M$.
\end{proof}

\begin{lemma} \label{L:descend projective}
Any finite projective module over $\tilde{\calR}^{\inte}_A$ is the base extension of some finite projective
module over $\breve{\calR}^{\inte}_A$.
\end{lemma}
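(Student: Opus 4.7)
The plan is to realize $\tilde{M}$ as the image of an idempotent matrix over $\tilde{\calR}^{\inte}_R$ and to descend that idempotent to $M_n(\tilde{\calR}^{\inte}_A)$ using approximation (from Lemma~\ref{L:reality check}) together with a Henselian lifting of approximate idempotents at the prime $p$.

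Since $\tilde{M}$ is finite projective over the commutative ring $\tilde{\calR}^{\inte}_R$, it is a direct summand of $(\tilde{\calR}^{\inte}_R)^n$ for some $n$, cut out by an idempotent $\tilde{e} \in M_n(\tilde{\calR}^{\inte}_R)$. The first step is to produce an approximate idempotent $e_0 \in M_n(\tilde{\calR}^{\inte}_A)$ with $e_0 \equiv \tilde{e} \pmod p$. The mod-$p$ reduction of $\tilde{\calR}^{\inte}_A$ is (essentially) the perfect closure $A^{\mathrm{perf}}$, while that of $\tilde{\calR}^{\inte}_R$ is the integral unit ball of $R = \widehat{A^{\mathrm{perf}}}$, so the inclusion of the former into the latter is $\overline{\pi}$-adically dense. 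Entrywise approximation of $\tilde{e}$ modulo $p$ by elements of $A^{\mathrm{perf}}$ then yields a matrix that is an approximate idempotent mod $p$, which by a standard Hensel argument over the Henselian ring $R^+$ can be adjusted to an honest idempotent mod $p$; lifting this entrywise via $\tilde{\calR}^{\inte}_A \to \tilde{\calR}^{\inte}_A / p$ gives the desired $e_0$, satisfying $e_0^2 - e_0 \in p M_n(\tilde{\calR}^{\inte}_A)$.

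Next I would refine $e_0$ to a true idempotent by the standard iteration $e_{k+1} := 3 e_k^2 - 2 e_k^3$; a direct computation gives $e_{k+1}^2 - e_{k+1} = -(2 e_k - 1)^2 (e_k^2 - e_k)^2$, so the $p$-adic valuation of the defect doubles at each step and the iteration converges $p$-adically to an idempotent $e \in M_n(\tilde{\calR}^{\inte}_R)$. Two idempotents congruent modulo $p$ in a $p$-adically separated commutative ring are conjugate by an element of $1 + p M_n(\tilde{\calR}^{\inte}_R)$, so $e(\tilde{\calR}^{\inte}_R)^n \cong \tilde{M}$. Provided $e \in M_n(\tilde{\calR}^{\inte}_A)$, the module $M := e(\tilde{\calR}^{\inte}_A)^n$ is a finite projective $\tilde{\calR}^{\inte}_A$-module whose base extension is $\tilde{M}$, as required.

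The main obstacle is precisely the containment $e \in M_n(\tilde{\calR}^{\inte}_A)$: the ring $\tilde{\calR}^{\inte}_A$ is not $p$-adically complete, so convergence of the iteration inside the ambient $\tilde{\calR}^{\inte}_R$ does not immediately place the limit back in $\tilde{\calR}^{\inte}_A$. To handle this I would show that $(\tilde{\calR}^{\inte}_A, p\tilde{\calR}^{\inte}_A)$ is a Henselian pair. Each $\calR^{\inte}_A = S$ is a weakly complete finitely generated $\frako$-algebra (Lemma~\ref{L:noetherian}) with $p \in \frakm S$ contained in its Jacobson radical (Remark~\ref{R:Jacobson radical}), hence is Henselian along $p$; this property is inherited by the filtered union $\tilde{\calR}^{\inte}_A = \bigcup_n \varphi^{-n}(\calR^{\inte}_A)$. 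Because the scheme of rank-$k$ idempotents in $M_n$ is smooth, the Henselian property lifts $e_0$ uniquely to an idempotent in $M_n(\tilde{\calR}^{\inte}_A)$, which must coincide with the $p$-adic Newton limit $e$ by uniqueness.
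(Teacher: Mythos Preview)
Your overall plan---realise the module as the image of an idempotent, approximate, run the Newton iteration $W \mapsto 3W^2 - 2W^3$---is exactly what the paper does. The difference is the topology in which you approximate and iterate, and that difference creates two genuine gaps.

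First, the step ``adjust to an honest idempotent mod $p$'' does not work as written. You approximate $\tilde e \bmod p \in M_n(R)$ by a matrix with entries in $A^{\perf}$ and then invoke ``Hensel over the Henselian ring $R^+$''. But Hensel over $R^+$ (or $R$) returns an idempotent in $M_n(R)$, not in $M_n(A^{\perf})$; and $M_n(A^{\perf}) = M_n(\tilde{\calR}^{\inte}_A/p)$ is exactly where you need it in order to lift back into $M_n(\tilde{\calR}^{\inte}_A)$. Descending the idempotent from $R$ to $A^{\perf}$ is already the same kind of problem you are trying to solve (a completion, not a Henselian quotient), so this step is circular.

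Second, even granting an idempotent $e \in M_n(\tilde{\calR}^{\inte}_A)$ lifting some $\bar e_0 \in M_n(A^{\perf})$, you do \emph{not} have $e \equiv \tilde e \pmod p$: the idempotent $\bar e_0$ is only close to $\tilde e \bmod p$ in the norm on $R$, not equal to it. So your ``two idempotents congruent mod $p$ are conjugate by $1 + pM_n$'' does not apply, and you have not yet shown that $e$ and $\tilde e$ cut out isomorphic summands.

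The paper sidesteps both issues by never leaving the norm $|\cdot|_r$. By Lemma~\ref{L:reality check}(d) one can choose the initial approximation $V$ with entries in a single ring $\varphi^{-n}(\calR_A^{\inte,rq^{-n}})$ and with $|V - U|_r$ small. That ring is \emph{complete} for $|\cdot|_r$, so the Newton iteration converges inside it---no Henselian detour is needed. The same norm estimate $|W - U|_r < |U|_r^{-2}$ then makes $UW + (1-U)(1-W)$ a unit, giving the isomorphism of summands directly. If you want to salvage your approach, the cleanest fix is to replace the $p$-adic iteration by this $|\cdot|_r$-iteration in a fixed $\varphi^{-n}(\calR_A^{\inte,rq^{-n}})$; once you do that, the Henselian appendix becomes unnecessary.
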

\begin{proof}
(Compare \cite[Lemma~5.6.8]{kedlaya-liu2}.)
Let $P$ be a finite projective module over $\tilde{\calR}^{\inte}_A$; we may assume $P \neq 0$. We can then find another
finite projective module $Q$ over $\tilde{\calR}^{\inte}_A$ such that $F = P \oplus Q$ is free.
The composition $F \to P \to F$ is a projector defined by some matrix $U$.
Choose $r>0$ for which $U$ is defined over $\tilde{\calR}^{\inte,r}_A$;
then $\left| U \right|_r \geq 1$.
For some positive integer $n$,
we can choose a matrix $V$ over $\varphi^{-n}(\calR^{\inte,r/q^n}_A)$ of the same size as $U$ such that $\left| V-U \right|_r < \left|U\right|_r^{-3}$.
In particular, since $V^2-V = V^2-V-U^2+U = (V-U)(V+U-1)$, we have $\left|V^2-V\right|_r < \left|U\right|_r^{-2}$.

We now define a sequence of matrices $W_0, W_1,\dots$ by taking $W_0 = V$ and forming $W_{l+1}$ from $W_l$ via a modified Newton iteration:
\[
W_{l+1} = 3W_l^2 - 2W_l^3.
\]
Since
\begin{align*}
W_{l+1} - W_l &= (W_l^2 - W_l)(1 - 2W_l) \\
W_{l+1}^2 - W_{l+1} &= (W_l^2-W_l)^2 (4W_l^2 - 4W_l - 3), 
\end{align*}
by induction on $l$ we have
\begin{align*}
\left| W_l -U \right|_r &< \left| U \right|_r^{-2} \\
\left| W_l^2 - W_l \right|_r &\leq \left| U \right|_r^{-2} \left( \left| V^2 - V \right|_r \left| U \right|_r^2\right)^{2^l}.
\end{align*}
Since $\varphi^{-n}(\calR^{\inte,rq^{-n}}_A)$ is complete with respect to
$\left| \bullet \right|_r$, 
the $W_n$ converge to a matrix $W$ over $\breve{\calR}^{\inte,r}_A$
with $\left |W-U \right|_r < \left |U\right|_r^{-2}$ and $W^2 = W$.
Let $F_0$ be a free module over $\breve{\calR}^{\inte,r}_A$ equipped with an isomorphism
$F_0 \otimes_{\breve{\calR}^{\inte,r}_A} \tilde{\calR}^{\inte}_A \cong F$.
Let $P_0$ and $Q_0$ be the images of the projector on $F_0$ defined by $W$ and $1-W$.
We then obtain a map $P_0 \otimes_{\breve{\calR}^{\inte,r}_A} \tilde{\calR}^{\inte}_A
\to P$ by embedding $P_0$ into $F_0$ and then projecting $F$ onto $P$;
we similarly obtain a map $Q_0 \otimes_{\breve{\calR}^{\inte,r}_A} \tilde{\calR}^{\inte}_A
\to Q$. Taking the direct sum, we obtain the map $F \to F$ defined by the matrix
\begin{align*}
UW + (1-U)(1-W) &= UW - U^2 - W^2 + UW  + 1\\
&= U(W-U) - (W-U)W + 1,
\end{align*}
for which we see that $|UW+(1-U)(1-W)-1|_r < 1$.
The map is therefore an isomorphism, so in particular there exists an isomorphism
$P_0 \otimes_{\breve{\calR}^{\inte,r}_A} \tilde{\calR}^{\inte}_A
\cong P$.
\end{proof}

\begin{remark}
Suppose that $\varphi$ is an absolute $q$-power Frobenius lift.
In this case, the ring $\tilde{\calR}_A$ can be identified with the relative Robba ring $\tilde{\calR}_R$ considered in  \cite{kedlaya-liu1, kedlaya-liu2} by taking $R$ to be the completed perfect closure of $A$.
In this way, we may apply results from \textit{op. cit.} to $\varphi$-modules over $\tilde{\calR}_A$.
\end{remark}

\section{$\varphi$-modules}
\label{sec:phi-modules}

Throughout \S\ref{sec:phi-modules}, continue to retain Hypothesis~\ref{H:absolute Frobenius lift}.

\begin{remark} \label{R:projective module topology}
In what follows, we will freely use the fact that any finite projective module over a normed ring carries a unique norm topology, obtained by 
choosing a presentation of the module as a direct summand of a finite free module and imposing the supremum norm induced by some basis of the latter. For a detailed derivation, see \cite[Lemma~2.2.12]{kedlaya-liu1}.
\end{remark}

\begin{defn}
Let $R$ be one of the rings in the following commutative diagram (in which all morphisms are inclusions):
\begin{equation} \label{eq:ring diagram}
\xymatrix@R10pt@C10pt{
\calR^{\inte}_A \ar[ddd] \ar[rrr] \ar[rd] &&& \calR^{\bd}_A \ar[rrr] \ar[ddd]\ar[rd] &&& \calR_A \ar[rd]\\
& \breve{\calR}^{\inte}_A \ar[ddd] \ar[rrr] \ar[rd] &&& \breve{\calR}^{\bd}_A \ar[rrr] \ar[ddd] \ar[rd] &&& \breve{\calR}_A \ar[rd] \\
&& \tilde{\calR}^{\inte}_A \ar[ddd] \ar[rrr] &&& \tilde{\calR}^{\bd}_A \ar[rrr] \ar[ddd] &&& \tilde{\calR}_A \\
\calE^{\inte}_A \ar[rrr] \ar[rd] &&& \calE_A \ar[rd] \\
& \breve{\calE}^{\inte}_A \ar[rrr] \ar[rd] &&& \breve{\calE}_A \ar[rd] \\
&& \tilde{\calE}^{\inte}_A \ar[rrr] &&& \tilde{\calE}_A.
}
\end{equation}
A \emph{projective $\varphi$-module} 
(resp.\ \emph{coherent $\varphi$-module})
over $R$ is a finite projective (resp.\ finitely presented)
$R$-module $M$ equipped with a semilinear action of $\varphi$ for which the induced map $\varphi^* M \to M$ is an isomorphism. Note that what we call a projective $\varphi$-module is often called simply a \emph{$\varphi$-module}.
\end{defn}

It is useful to reformulate the definition of a $\varphi$-module in more geometric terms.
\begin{lemma} \label{L:bundle to module}
Choose $0 < s \leq r \leq r_0$ with $r,s \in \QQ$. Let $N$ be a finitely generated (resp.\ finite projective) module over $\calR_A^{[s/q,r]}$ equipped with an isomorphism 
\[
\varphi^* N \otimes_{\calR_A^{[s/q^2,r/q]}} \calR_A^{[s/q,r/q]} \cong N \otimes_{\calR_A^{[s/q,r]}} \calR_A^{[s/q,r/q]}.
\]
Then $N$ lifts uniquely to a finitely generated (resp.\ finite projective) module $M_r$ over $\calR_A^r$ equipped with an isomorphism 
\[
\varphi^* M_r \cong M_r \otimes_{\calR_A^{r}} \calR_A^{r/q}.
\]
In particular, $N$ functorially gives rise to the coherent (resp.\ projective) $\varphi$-module  $M_r \otimes_{\calR_A^r} \calR_A$ over $\calR_A$.
\end{lemma}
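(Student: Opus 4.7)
The plan is to build $M_r$ by iteratively extending $N$ inward (to smaller lower endpoint) using the Frobenius isomorphism, then assembling the tower into a single module over $\calR_A^r$ via an inverse limit.

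\emph{Iterative extension.} Set $N_0 = N$ and construct by induction a compatible tower of finite projective modules $N_n$ over $\calR_A^{[s/q^{n+1},r]}$, each carrying a Frobenius isomorphism
\[
\varphi^* N_n \otimes_{\calR_A^{[s/q^{n+2},r/q]}} \calR_A^{[s/q^{n+1},r/q]} \cong N_n \otimes_{\calR_A^{[s/q^{n+1},r]}} \calR_A^{[s/q^{n+1},r/q]},
\]
such that $N_n$ restricts to $N_{n-1}$ on $\calR_A^{[s/q^n,r]}$ and the iso restricts to the given one on $\calR_A^{[s/q,r/q]}$. For the induction step, the Frobenius $\varphi \colon \calR_A^{[s/q^{n+1},r]} \to \calR_A^{[s/q^{n+2},r/q]}$ produces a finite projective module $\varphi^* N_n$ over $\calR_A^{[s/q^{n+2},r/q]}$. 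The inductive Frobenius iso identifies $N_n$ and $\varphi^* N_n$ on the overlap $\calR_A^{[s/q^{n+1},r/q]}$, so Kiehl-type glueing for finite projective modules along the rational covering $\{\calR_A^{[s/q^{n+2},r/q]}, \calR_A^{[s/q^{n+1},r]}\}$ of $\calR_A^{[s/q^{n+2},r]}$ produces $N_{n+1}$. That a new Frobenius iso for $N_{n+1}$ exists follows from the fact that both $\varphi^* N_{n+1}$ and $N_{n+1}$, restricted to $\calR_A^{[s/q^{n+2},r/q]}$, are naturally identified with $\varphi^* N_n$ (using $N_{n+1}|_{\calR_A^{[s/q^{n+1},r]}} = N_n$ and the commutation of Frobenius pullback with restriction).

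\emph{Inverse limit and Frobenius structure.} Since $\calR_A^r$ is, by definition, the Fr\'echet completion of $S_r[p^{-1}]$ for the seminorms $\left|\bullet\right|_t$, $t \in (0,r]$, the sequence $s/q^{n+1} \to 0^+$ gives $\calR_A^r = \varprojlim_n \calR_A^{[s/q^{n+1},r]}$, with flat transition maps (they are localizations of affinoids). The compatible tower $\{N_n\}$ therefore descends to a finite projective $\calR_A^r$-module $M_r := \varprojlim N_n$, and the individual Frobenius isos assemble, in the limit, into the desired isomorphism $\varphi^* M_r \cong M_r \otimes_{\calR_A^r} \calR_A^{r/q}$.

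\emph{Uniqueness.} Any lift $M_r'$ with the stated properties has $M_r' \otimes \calR_A^{[s/q,r]} \cong N$, and the Frobenius iso then forces $M_r' \otimes \calR_A^{[s/q^{n+1},r]} \cong N_n$ by induction on $n$, hence $M_r' \cong M_r$.

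The main obstacle is verifying the Kiehl-type descent and the inverse-limit step. Both are classical features of Robba-ring theory in the one-variable case (underlying the descent arguments in the slope filtration theorems of \cite{kedlaya-local, kedlaya-revisited, kedlaya-relative}); in our multivariate setting they continue to apply because the rings $\calR_A^{[s,r]}$ are genuine affinoid algebras over $\Frac \frako$ for rational $s \leq r$ (to which the general case reduces by a small perturbation), so Kiehl's theorem on finite projective modules over rational coverings is directly applicable, and the flatness of the transition maps ensures that finite projective modules over the Fr\'echet limit $\calR_A^r$ correspond to compatible systems over the slices.
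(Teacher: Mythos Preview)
Your approach is the same as the paper's: pull back by $\varphi$ and glue to extend $N$ over $\calR_A^{[t,r]}$ for all $t\in(0,r]$, then take global sections over the resulting quasi-Stein space. The glueing step is handled exactly as you indicate, via Kiehl's theorem on the affinoid pieces.

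The one place where your write-up is thinner than the paper's is the assertion that the inverse limit $M_r=\varprojlim N_n$ is a \emph{finite projective} $\calR_A^r$-module. Flatness of the transition maps is not enough to conclude this; that compatible systems of finite projective modules over the slices arise from a finite projective module over the Fr\'echet limit is precisely the nontrivial content here. The paper treats this as the main point and discharges it by invoking \cite[Proposition~2.2.7]{kpx}, or alternatively Bellovin's result \cite[\S2.2]{bellovin} that a coherent sheaf of uniformly bounded rank on a quasi-Stein space is generated by finitely many global sections (applied twice to get a finite presentation, after which projectivity is formal via \cite[Lemma~2.1.8]{kpx}). You should cite one of these rather than appeal to flatness.
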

\begin{proof}
For $n$ a nonnegative integer, let $N^{[s/q^{n+1}, r/q^n]}$ be the pullback of $N$ along
$\varphi^n$, viewed as a module over $\calR_A^{[s/q^{n+1}, r/q^n]}$ which is generated by a fixed number of elements (independent of $n$). 
Using Proposition~\ref{P:coherent glueing}, we may glue these modules together to obtain
a family of coherent modules over $\calR^{[s',r']}_A$ for all $r',s' \in \QQ$
with $0 < s' \leq r' \leq r$. By the analogue of Cartan's Theorem~A and Theorem~B for Fr\'echet-Stein algebras (see \cite[\S 3]{schneider-teitelbaum}), these modules all arise as base extensions from their inverse limit $N$, which is a \emph{coadmissible} module over $\calR_A^r$. However, using the uniform bound on the number of generators and relations needed to generate 
the $N^{[s/q^{n+1}, r/q^n]}$, we may argue as in \cite[Proposition~2.2.7]{kpx}
to see that $N$ is finitely presented (resp.\ finite projective);
when  $A$ is affinoid, this step can also be treated using an argument of Bellovin, for which see  \cite[\S2.2]{bellovin} or \cite[Proposition~2.6.16]{kedlaya-liu2}.
\end{proof}

\begin{remark}
When $A$ is affinoid, Lemma~\ref{L:bundle to module} asserts that a coherent (resp.\ projective) $\varphi$-module over $\calR_A$ is the same thing as a coherent (resp.\ coherent and locally free) sheaf on the rigid analytic space obtained by pasting together the
affinoid spaces $\sSp \calR^{[s,r]}_A$ for $0 < s \leq r \leq r_0$ for some choice of $r_0$.
\end{remark}

\section{\'Etale $\varphi$-modules}
\label{sec:etale phi-modules}

Throughout \S\ref{sec:etale phi-modules}, continue to retain Hypothesis~\ref{H:absolute Frobenius lift}.

\begin{defn} \label{D:etale phi-module}
For $R$ a ring appearing in \eqref{eq:ring diagram}, let $R_0$ be the subring appearing farthest to the left of $R$ in the same row. 
We say that a projective $\varphi$-module $M$ over $R$ 
is \emph{\'etale} if it arises by base extension from a $\varphi$-module $M_0$ over $R_0$; 
we then say that $M_0$ is an \emph{\'etale model} of $M$.
Note that this condition is vacuous when $R = R_0$, i.e., when $R = \calR_A^{\inte}, \breve{\calR}_A^{\inte}, \tilde{\calR}_A^{\inte}, \calE_A^{\inte}, \breve{\calE}_A^{\inte}, \tilde{\calE}_A^{\inte}$.
\end{defn}

\begin{lemma} \label{L:phi-module structure on projective}
For any ring $R$ in \eqref{eq:ring diagram}, every projective $R$-module can be equipped with the structure of a projective $\varphi$-module over $R$.
\end{lemma}
\begin{proof}
Let $M$ be a finite projective $R$-module. Choose a finite free $R$-module $F$ admitting a direct sum decomposition $F \cong M \oplus N$. We may equip $F$ with the structure of a projective $\varphi$-module by fixing an isomorphism $F \cong R^{\oplus n}$ of $R$-modules and equipping $R^{\oplus n}$ with the trivial $\varphi$-module structure.
We may then specify a $\varphi$-action on $N$ as the composition $N \to F \stackrel{\varphi}{\to} \to F \to N$.
\end{proof}

\begin{lemma} \label{L:phi-module lift to projective}
For any ring $R$ in \eqref{eq:ring diagram}, every coherent $\varphi$-module over $R$ can be written as a quotient of some projective $\varphi$-module over $R$; in fact we can even ensure that the underlying $R$-module of the latter is not only projective, but finite free over $R$.
\end{lemma}
\begin{proof}
This follows from a general assertion about finitely generated modules over a ring equipped with a semilinear action of some endomorphism of the base ring; see \cite[Lemma~1.5.2]{kedlaya-liu1}.
\end{proof}

\begin{cor} \label{C:phi-module projective to free}
For any ring $R$ in \eqref{eq:ring diagram}, every projective $\varphi$-module over $R$ is a direct summand of a 
projective $\varphi$-module over $R$ whose underlying $R$-module is free.
\end{cor}
\begin{proof}
This follows from Lemma~\ref{L:phi-module structure on projective} and Lemma~\ref{L:phi-module lift to projective}
as in \cite[Corollary~1.5.3]{kedlaya-liu1}.
\end{proof}

\begin{lemma} \label{L:tilde preserves phi-cohomology}
Let $M$ be a finitely generated module over $\calE^{\inte}_A$ equipped with a $\calE^{\inte}_A$-linear
morphism $\varphi^* M \to M$. (Unlike in the definition of a $\varphi$-module, we do not require this morphism to be an isomorphism.)
Put
\[
\breve{M} := M \otimes_{\calE^{\inte}_A} \breve{\calE}^{\inte}_A,
\qquad
\tilde{M} := M \otimes_{\calE^{\inte}_A} \tilde{\calE}^{\inte}_A.
\]
 Then the maps $M \to \breve{M} \to \tilde{M}$ induce quasi-isomorphisms between the mapping cones of $\varphi-1$ on $M, \breve{M}, \tilde{M}$ (that is, they preserve the kernel and the cokernel of $\varphi-1$).
\end{lemma}
\begin{proof}
By Lemma~\ref{L:lift properties} and Lemma~\ref{L:Robba split}, the morphisms $\calR_A^{\inte} \to \breve{\calR}_A^{\inte} \to \tilde{\calR}_A^{\inte}$ are faithfully flat. In particular, the morphisms $M \to \breve{M} \to \tilde{M}$ are injective, so the induced maps on kernels are injective.

We next verify the claim for $M \to \breve{M}$.
\begin{itemize}
\item
 If $x \in \breve{M}$ belongs to the kernel of $\varphi-1$, then $\varphi^n(x) \in M$ for some nonnegative integer $n$ and so $x = \varphi^n(x) \in M$; hence the kernel of $\varphi-1$ on $M$ surjects onto the kernel of $\varphi-1$ on $M$.
 \item
  If $x \in M$ has the form $(\varphi-1)(y)$ for some $y \in \breve{M}$, then $\varphi^n(y) \in M$ for some nonnegative integer $n$, and $\varphi^n(x) = (\varphi-1)(\varphi^n(y))$ represents the same class in the cokernel as does $x$; hence the cokernel of $\varphi-1$ on $M$ injects into the cokernel of $\varphi-1$ on $\breve{M}$. 
\item
For any $x \in \breve{M}$, we have $\varphi^n(x) \in M$ for some nonnegative integer $n$, and $x$ and $\varphi^n(x)$ represent the same class in the cokernel of $\varphi-1$ on $\breve{M}$. Hence the cokernel of $\varphi-1$ on $M$ surjects onto the cokernel of $\varphi-1$ on $M$.
\end{itemize}

We next verify the desired result in the case where $M$ is projective over $A$. In this case, there exists a neighborhood $N$ of 0 in $M$ such that $1-\varphi$ acts invertibly with the inverse $1 + \varphi + \varphi^2 + \cdots$. Since $\breve{M}$ is dense in $M$, it follows at once that $\breve{M} \to M$ induces a surjection of cokernels.
Moreover, if $\bv \in \breve{M}, \bw \in \tilde{M}$ satisfy $(1-\varphi)(\bw) = \bv$, we can write
$\bw = \bw_0 + \bw_1$ with $\bw_0 \in \breve{M}, \bw_1 \in N$, and then
\[
(1-\varphi)(\bw_1) = \bv + (\varphi-1)(\bw_0) \in \breve{M}.
\]
In particular, there is a nonnegative integer $n$ for which $(1-\varphi)(\bw_1) \in \varphi^{-n}(M)$;
we then have
\[
\bw_1 = (1 + \varphi + \varphi^2 + \cdots)((1-\varphi)(\bw_1)) \in \varphi^{-n}(M)
\]
and so $\bw \in \breve{M}$. This implies that $\breve{M} \to M$ induces a surjection of kernels (by taking $\bv = 0$) and an injection of cokernels, completing the proof in this case.

We next note that using Lemma~\ref{L:phi-module lift to projective} and the snake lemma, we may apply the previous paragraph to deduce the desired result in the case where $M$ is killed by $\varpi$. This in turn implies the case where $M$ is killed by some power of $\varpi$.

To deduce the general case, we work with the map $M \to \tilde{M}$. By the previous paragraph, we have a surjection of cokernels modulo $\varpi$; this immediately implies the surjection of cokernels. 
If $\bv \in M, \bw \in \tilde{M}$ satisfy $(1-\varphi)(\bw) = \bv$, 
then the previous paragraph again (specifically, the fact that the reduction of $M \to \tilde{M}$ modulo $\varpi^n$ induces a surjection of kernels and an injection of cokernels) implies that $\bw \in \varpi^n M + \tilde{M}$ for each
positive integer $n$. It follows that $\bv \in M$, and so we obtain the surjection of cokernels and the injection of kernels.
\end{proof}

\begin{lemma} \label{L:integral tilde base extension}
The base extension functor from coherent $\varphi$-modules over $\calE^{\inte}_A$ to coherent $\varphi$-modules over $\tilde{\calE}^{\inte}_A$ is an equivalence of categories.
\end{lemma}
\begin{proof}
The ring $\calE^{\inte}_A$ being noetherian, the category of coherent $\varphi$-modules over $\calE^{\inte}_A$ admits internal Homs. Since $\calE^{\inte}_A \to \calE_A$ is faithfully flat by Lemma~\ref{L:Robba split}(b),
the formation of internal Homs commutes with base extension along this morphism.
(In particular, internal Homs exist on the essential image of the base extension functor; we do not yet know that they exist on the entire category of $\varphi$-modules over $\tilde{\calE}^{\inte}_A$ admits internal Homs, as this will only follow \emph{a posteriori} from (a).)
Consequently, the full faithfulness assertion in (a) follows from the preservation of kernels in
Lemma~\ref{L:tilde preserves phi-cohomology}. Similarly, the preservation of cokernels in 
Lemma~\ref{L:tilde preserves phi-cohomology} implies that the formation of Yoneda extension groups commutes with the base change.

Based on the previous paragraph, we may reduce the verification of essential surjectivity first to the case where
$M$ is killed by some power of $\varpi$ (as we may then take inverse limits over reductions modulo power of $\varpi$ to deal with the general case), then to the case where $M$ is killed by $\varpi$ (using the preservation of extensions), and then finally to the case where $M$ is a free $A$-module (using Lemma~\ref{L:phi-module lift to projective}. In this case, fix a basis $\be_1,\dots,\be_d$ of $M$ and let $F$ be the matrix over $\tilde{A}$ expressing the action of $\varphi$ on this basis; then using a neighborhood $N$ as in the proof of Lemma~\ref{L:tilde preserves phi-cohomology},
we see that any sufficiently good approximation of $F$ over $\breve{A}$ defines an isomorphic $\varphi$-module.
This provides a descent of $M$ to $\breve{A}$, or more precisely to $\varphi^{-n}(A)$ for some nonnegative integer $n$; applying $\varphi^n$ yields a basis on which the action of Frobenius is defined over $A$, as desired.
\end{proof}

\begin{lemma}\label{L:etale fully faithful}
The following statements hold.
\begin{enumerate}
\item[(a)]
The base extension functor from \'etale $\varphi$-modules over $\calR^{\bd}_A$ to \'etale $\varphi$-modules over $\calE_A$ is fully faithful.
\item[(b)]
The base extension functor from \'etale $\varphi$-modules over $\calR^{\bd}_A$ to \'etale $\varphi$-modules over $\calR_A$ is an equivalence of categories.
\item[(c)]
The base extension functor from \'etale $\varphi$-modules over $\tilde{\calR}^{\bd}_A$ to \'etale $\varphi$-modules over $\tilde{\calR}_A$ is an equivalence of categories.
\end{enumerate}
\end{lemma}
\begin{proof}
As in \cite[Remark~4.3.4]{kedlaya-liu1}, (a) reduces to checking that
for $M$ a $\varphi$-module over $\calR^{\inte}_A$, we have 
\[
M^{\varphi} = (M \otimes_{\calR^{\inte}_A} \calE^{\inte}_A)^{\varphi};
\]
this may be established by following the method of \cite[Lemma~5.4.1]{kedlaya-revisited}.
Similarly, (b) reduces to checking that 
for $M$ a $\varphi$-module over $\calR^{\inte}_A$, we have 
\[
(M \otimes_{\calR^{\inte}_A} \calR^{\bd}_A)^{\varphi} = (M \otimes_{\calR^{\inte}_A} \calR_A)^{\varphi};
\]
this may be established by following the proof of \cite[Proposition~1.2.6]{kedlaya-revisited}. 
(For a similar argument, see the proof of Lemma~\ref{L:integrality} below.)
The proof of (c) is similar to that of (b).
\end{proof}

\begin{theorem}
Consider the following diagram:
\begin{equation} \label{eq:ring diagram2}
\xymatrix@R10pt@C10pt{
\calR^{\inte}_A \ar@{-->}[ddd] \ar@{==>}[rrr] \ar[rd] &&& \calR^{\bd}_A \ar[rr] \ar@{-->}[ddd]\ar[rd] && \calR_A \ar[rd]\\
& \breve{\calR}^{\inte}_A \ar@{-->}[ddd] \ar@{==>}[rrr] \ar@{-->}[rd] &&& \breve{\calR}^{\bd}_A \ar[rr] \ar@{-->}[ddd] \ar@{-->}[rd] && \breve{\calR}_A \ar@{-->}[rd] \\
&& \tilde{\calR}^{\inte}_A \ar[ddd] \ar@{==>}[rrr] &&& \tilde{\calR}^{\bd}_A \ar[rr] \ar[ddd] && \tilde{\calR}_A \\
\calE^{\inte}_A \ar@{==>}[rrr] \ar[rd] &&& \calE_A \ar[rd] \\
& \breve{\calE}^{\inte}_A \ar@{==>}[rrr] \ar[rd] &&& \breve{\calE}_A \ar[rd] \\
&& \tilde{\calE}^{\inte}_A \ar@{==>}[rrr] &&& \tilde{\calE}_A.
}
\end{equation}
\begin{enumerate}
\item[(a)]
Base extension of \'etale $\varphi$-modules along every single dashed or solid arrow in the diagram is fully faithful, and preserves the kernel and cokernel of $\varphi-1$.
\item[(b)]
Base extension of \'etale $\varphi$-modules along every double dashed arrow in the diagram induces an equivalence of isogeny categories (inverting $\varpi$).
\item[(c)]
Base extension of \'etale $\varphi$-modules along every solid arrow in the diagram is an equivalence of categories.
\end{enumerate}
\end{theorem}
\begin{proof}
Statement (a) follows from Lemma~\ref{L:etale fully faithful}(a). 
Statement (b) follows formally from the definition of an \'etale $\varphi$-module.
As for (c), we note that the arrows $* \to \breve{*}$ are trivial to handle;
the arrows $\calE_A^{\inte} \to \breve{\calE}_A^{\inte} \to \tilde{\calE}_A^{\inte}$ and
$\calE_A \to \breve{\calE}_A \to \tilde{\calE}_A$ are handled by Lemma~\ref{L:integral tilde base extension};
the arrows $\calR_A^{\bd} \to \calR_A$, $\breve{\calR}_A^{\bd} \to \breve{\calR}_A$ are handled by
Lemma~\ref{L:etale fully faithful}(b);
the arrow $\tilde{\calR}_A^{\bd} \to \tilde{\calR}_A$ is handled by 
Lemma~\ref{L:etale fully faithful}(c); and the arrows
$\tilde{\calR}_A^{\inte} \to \tilde{\calE}_A^{\inte}$, $\tilde{\calR}_A^{\bd} \to \tilde{\calE}_A$
are handled by an argument in the style of \cite[Theorem~8.5.3]{kedlaya-liu1}.
\end{proof}

\begin{remark}
For the morphisms $\calR^{\inte}_A \to \calE^{\inte}_A$, $\breve{\calR}_A^{\inte} \to \calE_A^{\inte}$,
$\calR^{\bd}_A \to \calE_A$, $\breve{\calR}^{\bd}_A \to \breve{\calE}_A$, it is expected that the base extension of \'etale $\varphi$-modules is not essentially surjective in general.
This would imply in turn that for the morphisms $\breve{\calR}_A^{\inte} \to \tilde{\calR}_A^{\inte}$,
$\breve{\calR}_A^{\bd} \to \tilde{\calR}_A^{\bd}, \breve{\calR}_A \to \tilde{\calR}_A$, base extension of \'etale $\varphi$-modules is not essentially surjective in general.

We propose a specific example which we have not verified in detail.
Take $\frako = \FF_p \llbracket \varpi \rrbracket$, $A = \FF_p((\pi))$, 
set $x = 1 + \sum_{n=1}^\infty \varpi^n \pi^{-1-p^n}$, and let $M$ be the free $\calE_A^{\inte}$-module on the single generator $\bv$ with the $\varphi$-action given by $\bv \mapsto x \bv$.
It should be possible to verify that $M$ does not descend to a $\varphi$-module over $\calR_A^{\inte}$;
this amounts to showing that for every $u \in \calE_A^{\inte \times}$, we have $u^{-1} x \varphi(u) \notin \calR_A^{\inte}$.
\end{remark}

\begin{remark} \label{R:reflect etale}
Beware that even when base extension along a morphism $R_1 \to R_2$ induces an equivalence of categories of
\'etale $\varphi$-modules, it does not immediately follow that for $M$ a projective $\varphi$-module over $R_1$, knowing that $M \otimes_{R_1} R_2$ is \'etale implies that $M$ itself is \'etale; it only follows that
there is an \'etale $\varphi$-module $N$ (which is unique up to unique isomorphism) over $R_1$ such that
$M \otimes_{R_1} R_2 \cong N \otimes_{R_1} R_2$. 

The fact that one may check the \'etale property is vacuously true in case $R_1$ is a ring of the form $*^{\inte}$, or in case $R_1 \to R_2$ is a map of the form $* \to \breve{*}$ (as in such cases base extension of arbitrary $\varphi$-modules is an equivalence of categories). The remaining cases are treated in Theorem~\ref{T:etale after base change} and Remark~\ref{R:etale after base change}.
\end{remark}

\begin{lemma} \label{L:base extension E}
The base extension functor from coherent $\varphi$-modules over $\calE_A$ to coherent $\varphi$-modules over
$\tilde{\calE}_A$ is an equivalence of categories.
\end{lemma}
\begin{proof}
Instead of the category of coherent $\varphi$-modules over either  $\calE_A^{\inte}$ or $\tilde{\calE}_A^{\inte}$,
we may work with the category of coherent modules $M$ equipped with linear morphisms $\varphi^* M \to M$ which need not be isomorphisms (we then recover the desired result by passing to the isogeny category). 
In this context, we may apply Lemma~\ref{L:tilde preserves phi-cohomology} as in the proof of 
Lemma~\ref{L:integral tilde base extension}.
\end{proof}

\begin{lemma} \label{L:restrict etale to R}
Let $M$ be a finite projective $\calR_A^{\bd}$-module and put $M' := M \otimes_{\calR_A^{\bd}} \calE_A$.
Let $M'_0$ be a finite projective $\calE_A^{\inte}$-module equipped with an isomorphism
$M'_0 \otimes_{\calE_A^{\inte}} \calE_A \cong M'$ and put $M_0 := M \cap M'_0$ within $M'$.
Then $M_0$ is a finite projective $\calR_A^{\inte}$-module.
(Moreover, the same assertion holds with each ring $*$ replaced by $\tilde{*}$.)
\end{lemma}
\begin{proof}
Since $\calR_A^{\bd}$ is dense in $\calE_A$ for the $\varpi$-adic topology, $M$ is dense in $M'$ for the $\varpi$-adic topology, and so $M_0$ is dense in $M'_0$. In particular, given a generating set $\be'_1,\dots,\be'_n$ of $M'_0$, we can find elements $\be_1,\dots,\be_n$ of $M_0$ and a matrix $A$ over $\calE_A^{\inte}$ such that $\be_j = \sum_{ij} A_{ij} \be'_i$ and $A-1$ is divisible by $\varpi$. By inverting $A$ we see that $\be_1,\dots,\be_n$ generates $M'_0$ and so $M_0 \otimes_{\calR_A^{\inte}} \calE_A^{\inte} \to M'_0$ is surjective.

Let $F_0$ be a finite free $\calR_A^{\inte}$-module on $n$ generators equipped with the $\calR_A^{\inte}$-linear morphism $F_0 \to M_0$ carrying these generators to $\be_1,\dots,\be_n$. Let $N_0$ be the kernel of $F_0 \to M_0$.
Put $F'_0 := F_0 \otimes_{\calR_A^{\inte}} \calE_A^{\inte}$ and $N' := \ker(F'_0 \to M'_0)$.
We now have a commutative diagram
\[
\xymatrix{
 & N_0 \otimes_{\calR_A^{\inte}} \calE_A^{\inte} \ar[r] \ar[d] & 
F_0 \otimes_{\calR_A^{\inte}} \calE_A^{\inte} \ar[r] \ar[d] & 
M_0 \otimes_{\calR_A^{\inte}} \calE_A^{\inte} \ar[r] \ar[d] & 0 \\
0 \ar[r] & N'_0 \ar[r] & F'_0 \ar[r] & M'_0 \ar[r] & 0
}
\]
with exact rows. Of the vertical arrows, the middle one is an isomorphism, the right one is surjective by the previous argument, and the left one is surjective by similar logic (replacing $M$ with
$N := N_0 \otimes_{\calR_A^{\inte}} \calR_A^{\bd}$); it follows that the right vertical arrow is also injective, and hence an isomorphism. Since $\calR_A^{\inte} \to \calE_A^{\inte}$ is faithully flat (Lemma~\ref{L:Robba split}(c)),
we deduce the desired result.
\end{proof}

\begin{lemma} \label{L:integrality}
Put $\calS = \tilde{\calR}_A \widehat{\otimes}_{\calR_A} \tilde{\calR}_A$
and let $\calS^{\inte}$ be the image of $\tilde{\calR}^{\inte}_A \widehat{\otimes}_{\calR_A^{\inte}} \tilde{\calR}^{\inte}_A$
in $\calS$. Let $F$ be a $d \times d$ matrix over $\calS^{\inte}$
and let $\bv$ be a column vector of length $d$ over $\calS$ such that $\bv = F \varphi(\bv)$.
Then $\bv$ has entries in $\calS^{\inte}[\varpi^{-1}]$.
\end{lemma}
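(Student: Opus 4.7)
The strategy is to iterate $\bv = F\varphi(\bv)$ and bound the seminorms $|\bv|_r$ uniformly as $r \to 0^+$ using a Hadamard-type inequality, then read off the integrality claim from the resulting norm bound.

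First I would equip $\calS$ with the natural Fr\'echet seminorms $|\bullet|_r$ extending those on the factors $\tilde{\calR}_R$, and choose $r_0>0$ small enough that the entries of $F$ lie in the image $\calS^{\inte,r_0}$ of $\tilde{\calR}^{\inte,r_0}_R \widehat{\otimes}_{\calR_A^{\inte}} \tilde{\calR}^{\inte,r_0}_R$, with $C_F := |F|_{r_0} < \infty$, and the entries of $\bv$ lie in the Fr\'echet completion $\calS^{r_0}$, so that $|\bv|_s < \infty$ for every $s \in (0,r_0]$. Substituting $\bv = F\varphi(\bv)$ into itself gives, by induction on $n$,
\[
\bv = F \cdot \varphi(F) \cdots \varphi^{n-1}(F) \cdot \varphi^n(\bv),
\]
where the leading matrix has entries in $\calS^{\inte}$. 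Using submultiplicativity of $|\bullet|_r$ together with the identity $|\varphi(x)|_r = |x|_{qr}$ (which on each factor follows directly from the Witt-vector formula $\varphi[\overline{x}] = [\overline{x}^q]$), this yields
\[
|\bv|_r \leq \prod_{i=0}^{n-1} |F|_{q^i r} \cdot |\bv|_{q^n r}.
\]

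Next I would establish a Hadamard estimate for the integral norm on $\calS^{\inte,r_0}$: on each factor $\tilde{\calR}_R^{\inte,r_0}$ a direct computation on the Witt-vector expansion $x = \sum p^n [\overline{x}_n]$ gives $|x|_s \leq |x|_{r_0}^{s/r_0}$ for $0 < s \leq r_0$ (in the spirit of Remark~\ref{R:Hadamard}), and this passes to the completed tensor product because $t \mapsto t^{s/r_0}$ is monotone. Hence $|F|_s \leq C_F^{s/r_0}$ for $s \in (0,r_0]$, and summing the resulting geometric series
\[
\prod_{i=0}^{n-1} |F|_{q^i r} \leq C_F^{\sum_{i=0}^{n-1} q^i r/r_0} = C_F^{r(q^n-1)/((q-1)r_0)}.
\]

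To conclude, for each $r \in (0,r_0/q]$ choose $n = n(r)$ with $q^n r \in (r_0/q,r_0]$. Then the exponent is bounded by $1/(q-1)$, and $|\bv|_{q^n r} \leq M := \sup_{s \in [r_0/q,r_0]} |\bv|_s < \infty$, so
\[
|\bv|_r \leq C_F^{1/(q-1)} \cdot M \qquad (r \in (0,r_0/q]).
\]
Picking $N$ with $p^{-N} C_F^{1/(q-1)} M \leq 1$ gives $\limsup_{r \to 0^+} |p^N \bv|_r \leq 1$, which forces $p^N \bv$ to have entries in $\calS^{\inte}$; hence $\bv$ has entries in $\calS^{\inte}[p^{-1}]$.

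The main obstacle is the Hadamard inequality in the third step: verifying cleanly that the tensor-product seminorm on $\calS^{\inte}$ inherits the convexity property from the two factors, and dually that a uniform bound $\limsup |p^N \bv|_r \leq 1$ actually witnesses membership of $p^N \bv$ in the image $\calS^{\inte}$ (not merely in some completion of it). Both points reduce to careful bookkeeping with the Witt-vector description of $\tilde{\calR}_R$ and the definition of the completed tensor product, but are the real technical content of the argument.
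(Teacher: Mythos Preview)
Your overall strategy---iterate the relation, bound $|\bv|_r$ uniformly as $r\to 0^+$, then read off integrality---is exactly the paper's. However, the two technical points you flag as ``the main obstacle'' are precisely where the paper does something different, and your write-up contains a real gap.

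First, you use submultiplicativity of $|\bullet|_r$ on $\calS$ to get $|F\varphi(\bv)|_r \leq |F|_r |\varphi(\bv)|_r$. The paper explicitly warns that the norm it puts on $\calS$ is \emph{not} submultiplicative. Its workaround is not a Hadamard bound on $|F|_s$, but an operator-norm normalization: after rescaling $F$ and $\bv$ by a power of $\pi$, one arranges that $|xF|_s \leq |x|_s$ for all $x\in\calS$ and all $s\in(0,r]$. This replaces your geometric-series product $\prod |F|_{q^i r}$ by a direct bound $|\bv|_s \leq |\varphi^m(\bv)|_s = |\bv|_{q^m s}$, and the induction on $m$ then propagates the bound from $[r/q,r]$ down to $(0,r]$ with no losses.

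Second, and relatedly, the norm the paper uses is not a tensor-product seminorm but the one induced by the $\calR_A$-linear projection decomposition of Lemma~\ref{L:complete splitting}, extended in two factors to give maps $\calS \to \widehat{\bigoplus}_{S_1,S_2}\calR_A \to \calS$. This choice is exactly what makes your second obstacle tractable: once $|\bv|_s$ is bounded for all $s\in(0,r]$, applying the second map in this decomposition (which is built from integral data) lands $\bv$ in $\calS^{\inte}$ directly. With only a tensor-product norm and the bare definition of $\calS^{\inte}$ as an \emph{image}, the implication ``$\limsup_r |p^N\bv|_r \leq 1 \Rightarrow p^N\bv \in \calS^{\inte}$'' is not at all obvious, and you correctly identify it as the real content.

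So: right outline, but the argument as written relies on submultiplicativity that is not available, and you have not supplied the mechanism (the projection-based norm and the operator-norm rescaling) that the paper uses to circumvent both difficulties simultaneously.
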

\begin{proof}
We use an argument modeled on the proof of \cite[Proposition 3.3.4]{kedlaya-relative}.
As per Lemma~\ref{L:Robba split}, we can write $\calR_A$ as a completed direct sum
\[
\widehat{\bigotimes}_{n=0}^\infty M_n, \qquad M_n := \begin{cases} \calR_A & n=0 \\
\varphi^{-n}(\calR_A)/\varphi^{-n+1}(\calR_A) & n>0.
\end{cases}
\]
We may then write
\begin{equation} \label{eq:splitting S}
\calS \cong \widehat{\bigotimes}_{n_1,n_2=0}^\infty M_{n_1} \otimes_{\calR_A} M_{n_2}.
 \end{equation}
Equip $\calS$ with the restriction of the supremum norm with respect to this decomposition.
By rescaling $F$ and $\bv$ by a power of $\pi$ as needed, we may reduce to the case where
there exists $r > 0$ such that
$|xF|_s \leq |x|_s$ for all $s \in (0,r]$ and all $x \in \calS$. 
(Note that the norm on $\calS$ is not submultiplicative, so it is not enough to require that $|F|_s \leq 1$.)
Choose $c>0$ so that $|\bv|_s \leq c$ for $s \in [r/q, r]$;
then for each positive integer $m$, the equality $\bv = F \varphi(F) \cdots \varphi^{m-1}(F) \varphi^m(\bv)$
implies that 
$|\bv|_s \leq c$ for $s \in [r/q^{m+1}, r/q^m]$ by induction on $m$. It follows that $|\bv|_s \leq c$ for all $s \in (0,r]$, and so $\bv \in \calS^{\inte}$.
\end{proof}

\begin{lemma} \label{L:density}
Let $M$ be a projective $\varphi$-module over $\calR_A$ such that $\tilde{M} = M \otimes_{\calR_A} \tilde{\calR}_A$
admits a free \'etale model $\tilde{M}_0$. Put $M_0 = M \cap \tilde{M}_0$ within $\tilde{M}$.
Then $\bigcup_{n=0}^\infty \varphi^{-n}(M_0)$ is dense in $\tilde{M}_0$ for the LF topology
of $\tilde{\calR}_A$.
\end{lemma}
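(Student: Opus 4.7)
The plan is to combine two ingredients: density of $\bigcup_n \varphi^{-n}(M)$ in $\tilde{M}$ coming from Lemma~\ref{L:reality check}(d), and the observation that the integrality condition cutting out $\tilde{M}_0$ inside $\tilde{M}$ is an \emph{open} condition in a sufficiently refined norm, so that a sufficiently close approximation of $\tilde{m} \in \tilde{M}_0$ is automatically again in $\tilde{M}_0$.

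Fix a free basis $\be_1,\dots,\be_d$ of $\tilde{M}_0$ over $\tilde{\calR}^{\inte}_R$; since $\tilde{M}_0$ is an \'etale model of $\tilde{M}$, the $\be_j$ also form a basis of $\tilde{M}$ as a free $\tilde{\calR}_R$-module. Equip $\tilde{M}$ with the sup-norm $\left| \sum y_j \be_j \right|_r = \max_j \left| y_j \right|_r$. Under this norm, $\tilde{M}_0$ is exactly the submodule where all coordinates lie in $\tilde{\calR}^{\inte}_R$. The openness observation is: if $\tilde{m} = \sum \tilde{x}_j \be_j \in \tilde{M}_0$ and $b = \sum y_j \be_j \in \tilde{M}$ satisfy $\left| b - \tilde{m} \right|_r < 1$ for some $r > 0$, then $\left| y_j - \tilde{x}_j \right|_r < 1$; Remark~\ref{R:Hadamard} gives $\left| y_j - \tilde{x}_j \right|_s \leq \left| y_j - \tilde{x}_j \right|_r^{s/r}$ for $s \in (0, r]$, so $\limsup_{s \to 0^+} \left| y_j - \tilde{x}_j \right|_s \leq 1$ and hence $y_j - \tilde{x}_j \in \tilde{\calR}^{\inte}_R$. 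Since $\tilde{x}_j \in \tilde{\calR}^{\inte}_R$, we conclude $y_j \in \tilde{\calR}^{\inte}_R$, so $b \in \tilde{M}_0$.

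For the density ingredient, realize $M$ as a direct summand of a finite free $\calR_A$-module. Then $\bigcup_n \varphi^{-n}(M) = M \otimes_{\calR_A} \tilde{\calR}_A$ is dense in $\tilde{M}$ in the $\left| \cdot \right|_r$-topology, via Lemma~\ref{L:reality check}(d) applied coordinate-wise together with Remark~\ref{R:projective module topology} to pass from the sup-norm on the ambient free module to the $\be$-basis norm on $\tilde{M}$. Given $\tilde{m} \in \tilde{M}_0$ and $\epsilon > 0$, choose $b \in \bigcup_n \varphi^{-n}(M)$ with $\left| b - \tilde{m} \right|_r < \min(\epsilon, 1)$. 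Openness forces $b \in \tilde{M}_0$. Since $\varphi$ is bijective on $\tilde{\calR}_R$ ($R$ being perfect) and hence on $\tilde{M}$, and since $\tilde{M}_0$ being \'etale gives $\varphi^{-n}(\tilde{M}_0) = \tilde{M}_0$, we obtain $b \in \varphi^{-n}(M) \cap \tilde{M}_0 = \varphi^{-n}(M \cap \tilde{M}_0) = \varphi^{-n}(M_0)$, as desired. Remark~\ref{R:Hadamard} then promotes density in $\left| \cdot \right|_r$ to density in $\left| \cdot \right|_s$ for all $s \in (0, r]$, giving the required Fr\'echet (hence LF) density.

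The main obstacle is the interplay between two norms on $\tilde{M}$: the $\be$-basis norm (which detects membership in $\tilde{M}_0$) and the norm inherited from a presentation of $M$ as a direct summand of a free $\calR_A$-module (which governs the density of $\bigcup_n \varphi^{-n}(M)$). Remark~\ref{R:projective module topology} guarantees equivalence, but one must carefully track the constants so that the approximation in the $M$-norm is within distance strictly less than $1$ in the $\be$-norm, triggering the openness step.
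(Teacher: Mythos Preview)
Your openness claim is false, and this is not a matter of tracking constants between equivalent norms. The Hadamard inequality of Remark~\ref{R:Hadamard} is stated only for elements of the dagger lift $S_r$; it extends to $\tilde{\calR}^{\inte,r}_R$, but it does \emph{not} hold for general elements of $\tilde{\calR}^r_R$. The simplest counterexample is $x=p^{-1}$: here $|x|_s=p$ for every $s$, whereas $|x|_r^{s/r}=p^{s/r}<p$ for $s<r$. More to the point, $\tilde{\calR}^{\inte}_R$ is not open in $\tilde{\calR}^r_R$ for the norm $|\cdot|_r$ (nor for the full Fr\'echet family): for any $\epsilon>0$ and any $N$, the element $p^{-N}[\overline{u}]$ with $|\overline{u}|$ sufficiently small satisfies $|p^{-N}[\overline{u}]|_r<\epsilon$ but lies outside $\tilde{\calR}^{\inte}_R$. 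Thus an approximation $b\in\bigcup_n\varphi^{-n}(M)$ with $|b-\tilde m|_r<1$ need not have $\be$-coordinates in $\tilde{\calR}^{\inte}_R$, and your argument collapses at exactly the step where you invoke Remark~\ref{R:Hadamard} on $y_j-\tilde x_j$ without first knowing that this difference is integral.

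This is precisely the difficulty the paper's proof is designed to overcome. Rather than approximating and hoping the result is integral, the paper works with the tensor square $\calS=\tilde{\calR}_R\widehat\otimes_{\calR_A}\tilde{\calR}_R$ and uses Lemma~\ref{L:integrality} to show that the change-of-basis matrix between the two pullbacks of the $\be$-basis has entries in $\calS^{\inte}[p^{-1}]$. This integrality statement, combined with the $\calR_A$-linear splitting of Lemma~\ref{L:complete splitting}, forces the components of any $\bv\in\tilde M_0$ under the projection $\tilde M\to\widehat\bigoplus M$ to land in $p^{-j}M_0$ for a \emph{uniform} $j$. Density then follows. In other words, the integrality of the approximants is not automatic from closeness; it has to be proved, and that proof is the content of Lemma~\ref{L:integrality}.
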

\begin{proof}
We use an argument modeled on the proof of \cite[Theorem 3.1.3]{kedlaya-relative}.
Define the rings $\calS^{\inte},\calS$ as in Lemma~\ref{L:integrality}.
Let $\iota_1, \iota_2: \tilde{\calR}_A \to \calS$ be the homomorphisms
$\iota_1(x) = x \otimes 1, \iota_2(x) = 1 \otimes x$.
Choose a basis $\be_1,\dots,\be_d$ of $\tilde{M}_0$. 
We then have $\varphi(\be_j) = \sum_i F_{ij} \be_i$ for some invertible matrix $F$ over $\tilde{\calR}^{\inte}_A$,
and $\iota_1(\be_j) = \sum_i U_{ij} \iota_2(\be_i)$ for some invertible matrix $U$ over $\calS$.
Applying $\varphi$, we find that $U \iota_1(F) = \iota_2(F) U$. By Lemma~\ref{L:integrality},
$U$ has entries in $\calS^{\inte}[\varpi^{-1}]$.

Take the inclusion $M \to \tilde{M}$, take completed tensor products
over $\calR_A$ with $\tilde{\calR}_A$, then identify the source with $\tilde{M}$ to obtain
a map $f: \tilde{M} \to \tilde{M} \widehat{\otimes}_{\calR_A} \tilde{\calR}_A$.
By the previous paragraph, $f(\tilde{M}_0)$ is contained in $\tilde{M}_0 \widehat{\otimes}_{\calR_A^{\inte}}
\tilde{\calR}_A^{\inte}[\varpi^{-1}]$. Since the map $\calR_A \to \tilde{\calR}_A$ admits an $\calR_A$-linear section
by Lemma~\ref{L:Robba split}, it follows that if we tensor the decomposition \eqref{eq:splitting S} of $\calS$ with $M$ and then decompose $\bv \in \tilde{M}_0$,
the resulting components in $M$ (tensored with $M_{n_1} \otimes M_{n_2}$) in fact belong to $M \cap
\tilde{M}_0[\varpi^{-1}] = M_0[\varpi^{-1}]$. More precisely, they all belong to $\varpi^{-j} M_0$ for some
nonnegative integer $j$ which may be chosen independent of $\bv$.
This implies that $\bigcup_{n=0}^\infty \varphi^{-n}(M_0)$ is dense in $\varpi^k \tilde{M}_0$;
since $\bigcup_{n=0}^\infty \varphi^{-n}(M_0)$ is $\varpi$-saturated in $\tilde{M}_0$,
we may also deduce the original claim.
\end{proof}

\begin{lemma} \label{L:etale descent}
Let $M$ be a projective $\varphi$-module over $\calR_A$
and put $\tilde{M} = M \otimes_{\calR_A} \tilde{\calR}_A$.
Let $\tilde{M}_0$ be an \'etale model of $\tilde{M}$.
Then the intersection $M_0 = M \cap \tilde{M}_0$ within $\tilde{M}$ is an \'etale model of $M$.
Moreover, if $\tilde{M}_0$ is free as a module over $\tilde{\calR}^{\inte}_A$, then $M_0$ is free as a module
over $\calR_A^{\inte}$.
\end{lemma}
\begin{proof}
We first treat the case where $\tilde{M}_0$ is free.
Choose a basis $\be_1,\dots,\be_d$ of $\tilde{M}_0$ and pick any $r>0$.
By Lemma~\ref{L:density}, we can
find $\be'_1,\dots,\be'_d \in \bigcup_{n=0}^\infty \varphi^{-n}(M_0)$ so that the matrix $U$ over 
$\tilde{\calR}_A$ for which $\be'_j = \sum_i U_{ij} \be_i$ has entries in
$\tilde{\calR}_A$ and satisfies $\left|U-1\right|_s < 1$ for $s \in [r, qr]$.
On the other hand, $U$ also has entries with nonnegative $\varpi$-adic valuation,
so $U$ is forced to be invertible.
It follows that $\be'_1,\dots,\be'_d$ also form a basis of $\tilde{M}_0$. Choose $n$ so that 
$\be'_1,\dots,\be'_d \in \varphi^{-n}(M_0)$; then $\varphi^n(\be'_1), \dots, \varphi^n(\be'_d)$ form
a basis of $M_0$, so $M_0$ is an \'etale model of $M$.

We next reduce the general case to the case of a free \'etale model.
By Lemma~\ref{L:descend projective} plus the isomorphism $\varphi^* \tilde{M}_0 \cong \tilde{M}_0$, we can find a finite projective module
$M'_0$ over $\calR^{\inte}_A$ and an isomorphism
$M'_0 \otimes_{\calR^{\inte}_A} \tilde{\calR}_A \cong \tilde{M}_0$
of modules over $\tilde{\calR}_A$.
We can then choose a finite projective module $N_0$ over $\calR_A^{\inte}$
such that $F_0 = M'_0 \oplus N_0$ is free over $\calR_A$. 
Put $N = N_0 \otimes_{\calR_A^{\inte}} \calR_A$ and $F = F_0 \otimes_{\calR_A^{\inte}} \calR_A$.
We may equip $F$ with the structure of an \'etale $\varphi$-module over $\calR_A$
admitting $F_0$ as an \'etale model 
by decreeing that $\varphi$ fixes some basis of $F_0$.
We may then equip $N$ with the structure of an \'etale $\varphi$-module over $\calR_A$
admitting $N_0$ as an \'etale model: define the action of $\varphi$ on $N_0$
by mapping $N_0$ into $F_0$, applying $\varphi$ on $F_0$ as described, then projecting $F_0$ to $N_0$.
Put $\tilde{N} = N \otimes_{\calR_A} \tilde{\calR}_A$ and $\tilde{N}_0 = N_0 \otimes_{\calR_A^{\inte}}
\tilde{\calR}_A^{\inte}$; by applying the previous paragraph,
we deduce that the intersection $(M \oplus N) \cap (\tilde{M}_0 \oplus \tilde{N}_0) = M_0 \oplus N_0$ within
$\tilde{M} \oplus \tilde{N}$ is an \'etale model of $M \oplus N$. 
Consequently, $M_0$ is an \'etale model of $M$.
\end{proof}

\begin{theorem} \label{T:etale after base change}
In the following diagram, each solid arrow has the following property: a projective $\varphi$-module over the source
ring is \'etale if and only if its base extension along the arrow is \'etale.
\begin{equation} \label{eq:ring diagram3}
\xymatrix@R10pt@C10pt{
\calR^{\bd}_A \ar@{-->}[rr] \ar[dd]\ar[rd] && \calR_A \ar[rd]\\
& \breve{\calR}^{\bd}_A \ar@{-->}[rr] \ar[dd] \ar[rd] && \breve{\calR}_A \ar[rd] \\
\calE_A \ar[rd] && \tilde{\calR}^{\bd}_A \ar@{-->}[rr] \ar[dd] && \tilde{\calR}_A \\
& \breve{\calE}_A \ar[rd] \\
&& \tilde{\calE}_A
}
\end{equation}
\end{theorem}
\begin{proof}
We handle the various cases as follows.

\begin{itemize}
\item
For the morphisms $\calR_A^{\bd} \to \breve{\calR}_A^{\bd}$, $\calR_A \to \breve{\calR}_A$,
$\calE_A \to \breve{\calE}_A$, see Remark~\ref{R:reflect etale}.

\item
For the morphism $\calR_A^{\bd} \to \calE_A$, apply Lemma~\ref{L:restrict etale to R};
this also resolves the case  $\breve{\calR}_A^{\bd} \to \breve{\calE}_A$,
and a similar argument applies to $\tilde{\calR}_A^{\bd} \to \tilde{\calE}_A$.
\item
For the morphism $\breve{\calE}_A \to \tilde{\calE}_A$,
we may instead consider the morphism $\calE_A \to \tilde{\calE}_A$.
In the notation of Remark~\ref{R:reflect etale}, Lemma~\ref{L:base extension E}
implies that the isomorphism $M \otimes_{R_1} R_2 \to N \otimes_{R_1} R_2$ descends to an isomorphism $M \to N$.

\item
For the morphism $\breve{\calR}_A^{\bd} \to \tilde{\calR}_A^{\bd}$, we may formally deduce from the previous cases.

\item
For the morphism $\breve{\calR}_A \to\tilde{\calR}_A$, apply 
Lemma~\ref{L:etale descent}.
\end{itemize}
\end{proof}

\begin{remark} \label{R:etale after base change}
For the morphisms $\calR_A^{\bd} \to \calR_A$, $\breve{\calR}_A^{\bd} \to \breve{\calR}_A$, $\tilde{\calR}_A^{\bd} \to \tilde{\calR}_A$, some examples of non-\'etale $\varphi$-modules with \'etale base extensions arise from the fact that a family of elliptic curves which is generically ordinary can specialize to a supersingular elliptic curve.
\end{remark}

\section{Enhanced $\varphi$-modules}

In general, the base extension functor in Lemma~\ref{L:etale fully faithful}(a) is not essentially surjective. However, many natural examples arise in conjunction with some additional structure, and in some such settings one can establish essential surjectivity for $\varphi$-modules. We next introduce a flexible framework for considering $\varphi$-modules equipped with additional structure.

\begin{defn} \label{D:monoid ring}
Let $M$ be a monoid equipped with a left action on a ring $R$. The \emph{twisted monoid ring} $R\{M\}$ is the free left $R$-module on the generating set $\{[m]: m \in M\}$, equipped with the multiplication
\[
\left( \sum_{m \in M} r_{1,m} [m] \right) \left( \sum_{m \in M} r_{2,m} [m] \right)
= \sum_{m \in M} \left( \sum_{m_1, m_2 \in M: m_1 m_2 = m} r_{1,m_1} m_1(r_{2,m_2}) \right)[m].
\]
This defines an associative, but typically not commutative, $R$-algebra.
When the monoid $M$ is free on a single generator $m$, we will write $R\{m\}$ instead of $R\{M\}$; this gives a twisted polynomial ring in the sense of Ore \cite{ore}.
\end{defn}

\begin{defn}
Let $*$ be one of the symbols $\emptyset, \inte, \bd$.
By an \emph{enhanced $q$-power Frobenius lift} on $\calR^*_A$, we will mean a (not necessarily commutative) ring $R$ equipped with a homomorphism $\calR^{*}_A\{\varphi\} \to R$,
where $\varphi$ is a relative $q$-power Frobenius lift on $\calR^*_A$.
Given such an object, 
an \emph{enhanced projective $\varphi$-module}
(resp.\ an \emph{enhanced coherent $\varphi$-module}) over $R$
is a left $R$-module which is a projective 
(resp. coherent) $\varphi$-module over $\calR^{*}_A$.
We may write \emph{$R$-enhanced} instead of \emph{enhanced} if we need to specify $R$.
\end{defn}

\begin{remark} \label{R:p-adic Lie}
A typical example of an enhanced $q$-power Frobenius lift is the twisted monoid ring
corresponding to the product of the free monoid generated by $\varphi$ with a group $\Gamma$ acting on $\calR_A$; for such data, an enhanced $\varphi$-module is simply a $\varphi$-module equipped with a compatible $\Gamma$-action. 
However, in applications, one typically has a topology on $\Gamma$ 
for which the action on $\calR_A$ is continuous (that is, the action map $\Gamma \times \calR_A \to \calR_A$ is continuous) and we want the action on $\varphi$-modules to be continuous (in the corresponding sense); this amounts to replacing the monoid algebra with a suitable completion thereof. In most cases of interest, the mechanism
for forming this completion is addressed by the considerations of \S\ref{sec:local analyticity}.
\end{remark}

\begin{defn}
We say that an enhanced $q$-power Frobenius lift admits \emph{overconvergent descent} if the base extension functor from enhanced $\varphi$-modules over $\calR_A^{\inte}$ to enhanced $\varphi$-modules over $\calE_A^{\inte}$
(which is fully faithful by Lemma~\ref{L:etale fully faithful}(a)) is an equivalence of categories.
\end{defn}

\begin{defn}
There are two different ways to define the notion of an \emph{\'etale enhanced $\varphi$-module} over some base ring $R$.
The most obvious one is to take an enhanced $\varphi$-module over $R_0$ where $R_0$ is obtained from $R$ as in
Definition~\ref{D:etale phi-module}. The other is to take an enhanced $\varphi$-module over $R$ whose underlying $\varphi$-module admits an \'etale model (but this model need not inherit ``enhanced'' structure); one could refer to the latter as a \emph{weakly \'etale enhanced $\varphi$-module}, but we will not see any such objects in the remainder of this paper.
\end{defn}

\section{Local analyticity}
\label{sec:local analyticity}

We will mostly apply Remark~\ref{R:p-adic Lie} with $\Gamma$ being a $p$-adic Lie group
satisfying a fairly mild continuity condition; however, this will formally promote to a much stronger property on the action.
\begin{remark}
For $\Gamma$ a group acting on a ring $R$, we will have use of two identities akin to the Leibniz rule and chain rule for derivations: for $\gamma, \gamma_1, \gamma_2 \in \Gamma$
and $x,y,z \in R$,
\begin{align} \label{eq:Leibniz}
(\gamma-1)(yz) &= z (\gamma-1)(y) + \gamma(y) (\gamma-1)(z) \\
\label{eq:chain}
(\gamma_1 \gamma_2 - 1)(x) &= (\gamma_1-1)(\gamma_2(x)) + (\gamma_2-1)(x).
\end{align}
\end{remark}

\begin{lemma} \label{L:mod p analytic action}
Let $\Gamma$ be a $p$-adic Lie group equipped with an action on $A$ satisfying the following conditions.
\begin{enumerate}
\item[(i)] For each $\gamma \in \Gamma$, the map $\gamma: A \to A$ is continuous.
\item[(ii)] For each $\overline{x} \in A$, the map $\overline{x}: \Gamma \to A$ is continuous.
\end{enumerate}
Then the action map $\Gamma \times A \to A$ is continuous.
Moreover, if we equip $A$ with the quotient norm induced by some presentation,
then for each $c \in (0,1)$
there exists a pro-$p$ compact open subgroup $H$ of $\Gamma$
such that for all $\gamma \in H$ and all $\overline{x} \in A$,
\begin{equation} \label{eq:continuity}
\left| (\gamma-1)(\overline{x}) \right| < c \left| \overline{x}  \right|.
\end{equation}
\end{lemma}
\begin{proof}
Let $\overline{f}:k (( \pi )) \langle \overline{T}_1,\dots,\overline{T}_{m} \rangle \llangle \overline{U}_1,\dots,\overline{U}_{n} \rrangle \to A$
be the chosen presentation.
Since $\Gamma$ is a $p$-adic Lie group, 
there exist a pro-$p$ compact open subgroup $H_0$ of $\Gamma$ and elements $\gamma_1,\dots,\gamma_n \in H_0$ such that:
\begin{itemize}
\item
the map
\[
\ZZ_p^n \to H_0, \qquad (e_1,\dots,e_n) \mapsto \gamma_1^{e_1} \cdots \gamma_n^{e_n}
\]
is a homeomorphism;
\item
for each nonnegative integer $i$, the image $H_i$ of $p^i \ZZ_p^n$ in $H_0$ is a compact open subgroup of $\Gamma$;
\item
the groups $H_0, H_1, \dots$ form a neighborhood basis of the identity in $\Gamma$.
\end{itemize}
By condition (ii), for any given $\overline{x} \in A$, 
\eqref{eq:continuity} holds for $\gamma = \gamma_j^{p^i}$ for any sufficiently large $i$.
In particular, we may ensure that this holds for $\overline{x}$ in the image of $\{\overline{\pi}, \overline{\pi}^{-1}, \overline{T}_1,\dots,\overline{T}_m,
\overline{U}_1,\dots,\overline{U}_n\}$ and $\gamma \in \{\gamma_1^{p^i},\dots,\gamma_n^{p^i}\}$; using \eqref{eq:Leibniz},
for the same choices of $\gamma$
we get \eqref{eq:continuity} also for 
every $\overline{x} \in A$. By condition (i),
we may extend to all $\gamma \in H_i$.
\end{proof}

\begin{remark}
From the proof of Lemma~\ref{L:mod p analytic action}, we also see that we can either relax (i) by only considering $\gamma$ running over some set of topological generators of $\Gamma$, or relax (ii) by only considering $\overline{x}$ running over some set of topological $k$-algebra generators of $A$. However, we cannot make both relaxations at once.
\end{remark}

By a similar argument, we obtain the following result.
\begin{lemma} \label{L:lifted analytic action}
Let $\Gamma$ be a $p$-adic Lie group equipped with an action on $S$ satisfying the following conditions.
\begin{enumerate}
\item[(i)] For each $\gamma \in \Gamma$, the map $\gamma: S \to S$ is a dagger morphism.
\item[(ii)] For each $\overline{x} \in A$, the map $\overline{x}: \Gamma \to A$ is continuous.
\end{enumerate}
Then the action map $\Gamma \times S \to S$ is continuous both for the weak topology on $S$ and for the LF topology on $S$.
Moreover, if we fix a presentation on $S$, then for some $r_0 > 0$, for each $c \in (0,1)$, for each $r \in (0,r_0]$,
there exists a pro-$p$ compact open subgroup $H$ of $\Gamma$
such that for all $\gamma \in H$ and all $x \in S^r$,
\begin{equation} \label{eq:continuity2}
\left| (\gamma-1)(x) \right|_r < c \left| x  \right|_r.
\end{equation}
\end{lemma}
\begin{proof}
By conditions (i) and (ii), the conclusions of  Lemma~\ref{L:mod p analytic action} apply. 
Retain notation as in the proof of Lemma~\ref{L:mod p analytic action}
with $f: \bA\langle T_1,\dots,T_{m} \rangle\llangle U_1, \dots, U_{n} \rrangle^{\dagger} \to S$ being the fixed presentation of $S$.
(We will not use (ii) explicitly hereafter.)

By condition (i) and Lemma~\ref{L:mod p analytic action},
we may choose $H_0$ so that for each $j$, $\gamma_j: S \to S$ is a dagger morphism which is norm-scaling with parameter $1$.
By Lemma~\ref{L:dagger compatibility}, there exist $r_0 > 0, c_1 \geq 1$ such that for $0 < r \leq r_0$, $\gamma_j$ maps $S^r$ to $S^r$ and
\[
\left| \gamma_j(x) \right|_r \leq c_1^r \left| x \right|_r \qquad (x \in S^r).
\]
This implies that for each nonnegative integer $i$,
\[
\left| (\gamma_j^{p^i}-1)(x) \right|_r \leq c_1^r \left| x \right|_r \qquad (x \in S^r).
\]
However, for any given $c_2 \in (0,1)$, Lemma~\ref{L:mod p analytic action} implies that for $i$ sufficiently large
(independent of $x$ or $r$),
\[
\left| (\gamma_j^{p^i}-1)(\overline{x}) \right| < c \left| \overline{x} \right| \qquad (\overline{x} \in A).
\]
For any $x \in S^r$ with nonzero reduction $\overline{x} \in A$,
we may combine the two previous inequalities using Remark~\ref{R:Hadamard} to deduce that for any $c>0$, for $i$ sufficiently large (depending on $x$ in addition to $c$), \eqref{eq:continuity2} holds for
$\gamma = \gamma_j^{p^i}$ for this particular value of $x$. 
By applying this inequality for $x$ in the image of $\{\pi, \pi^{-1}, T_1,\dots,T_m,
U_1,\dots,U_n\}$ 
and invoking \eqref{eq:Leibniz}, we deduce that for $i$ sufficiently large, \eqref{eq:continuity2} holds
with $\gamma = \gamma_j^{p^i}$ for all $x \in S^r$.

We now prove the claim in the case where $c \geq p^{-1}$.
Fix $i$ as above and take $H$ to be the closure of the subgroup generated by $\gamma_1^{p^i},\dots,\gamma_j^{p^i}$.
To verify \eqref{eq:continuity2} for all $\gamma \in H$, by \eqref{eq:chain} it suffices to do this for
$\gamma = \gamma_1^{p^i e}$ with $e \in \ZZ_p$. Write $e = e_0 + p e_1$ with $e_0 \in \ZZ$; by \eqref{eq:chain} plus the previous argument, the claims for $e$ and $p e_1$ are equivalent. However,
\[
\gamma_j^{p e_1} - 1 = 
\sum_{h=1}^{p} \binom{p}{h} (\gamma_j^{e_1} - 1)^h
\]
and so
\[
\left| (\gamma_j^{pe_1} - 1)(x) \right|_r \leq \max_h\{\left| p (\gamma_j^{e_1}-1)(x) \right|_r\}
\leq p^{-1} \left| x \right|_r.
\]

By similar logic, the claim for any single value of $c \in (0,1)$ implies the same claim with $c$ replaced by
$\max\{c^p, p^{-1} c \}$. By iterating this, we deduce the claim for all $c \in (0,1)$.
\end{proof}

\begin{cor} \label{C:lifted analytic action}
With notation as in Lemma~\ref{L:lifted analytic action}, for each sufficiently small $r>0$, 
the action of $\Gamma$ is locally analytic (in the sense of Lazard \cite{lazard})
with respect to $\left| \bullet \right|_r$.
\end{cor}
\begin{proof}
Let $H$ be a subgroup of the form given by Lemma~\ref{L:lifted analytic action} for some $c \in (0, 1/p^{1/(p-1)})$;
more precisely, we can and will assume that there exist elements $\gamma_1,\dots,\gamma_n \in H$ such that
\[
\ZZ_p^n \to H, \qquad (e_1,\dots,e_n) \mapsto \gamma_1^{e_1} \cdots \gamma_n^{e_n}
\]
is a homeomorphism. We have
\begin{equation} \label{eq:analytic action}
\gamma_1^{e_1} \cdots \gamma_n^{e_n}(x) = \sum_{l_1,\dots,l_n=0}^\infty 
\left( \prod_{j=1}^n 
e_j(e_j-1)\cdots(e_j-l+1) \right) \frac{(\gamma_1-1)^{e_1}}{e_1!} \cdots \frac{(\gamma_n-1)^{e_n}}{e_n!}(x);
\end{equation}
more precisely, this holds \emph{a priori} when $e_1,\dots,e_n$ are positive integers (as then the sum is finite),
but the estimate
\[
\left| \frac{(\gamma_1-1)^{e_1}}{e_1!} \cdots \frac{(\gamma_n-1)^{e_n}}{e_n!} (x) \right|_r \leq c^{e_1+\cdots+e_n} p^{(e_1+\cdots+e_n)/(p-1)} \left| x \right|_r.
\]
implies that the series converges for all $e_1,\dots,e_n$, and continuity of the action
(Lemma~\ref{L:lifted analytic action}) then implies that \eqref{eq:analytic action} holds in all cases.
From this series representation, one reads off the claim.
\end{proof}

\begin{defn}
Let $\Gamma$ be a profinite group. 
The group algebra $\ZZ_p[\Gamma]$ admits an augmentation morphism $\ZZ_p[\Gamma] \to \ZZ_p$
taking $\sum_{m \in \Gamma} r_m[m]$ to $\sum_{m \in \Gamma} r_m$; the kernel of this morphism is the \emph{augmentation ideal} of $\ZZ_p[\Gamma]$. Define the completed group algebra
$\ZZ_p \llbracket \Gamma \rrbracket$ to be the completion of $\ZZ_p[\Gamma]$ with respect to the augmentation ideal.

Now let $\Gamma$ be a $p$-adic Lie group. We may then define $\ZZ_p \llbracket \Gamma \rrbracket$ as
the tensor product $\ZZ_p[\Gamma] \otimes_{\ZZ_p[H_0]} \ZZ_p \llbracket H_0 \rrbracket$ 
where $H_0$ is a compact open subgroup of $\Gamma$; it is easily verified that this definition does not depend on the choice of $H_0$.
\end{defn}

\begin{defn} \label{D:completed monoid ring}
Let $\Gamma$ be a $p$-adic Lie group. We say that an action of $\Gamma$ on $S$ is \emph{Lazardian} 
if it satisfies hypotheses (i) and (ii) of Lemma~\ref{L:lifted analytic action}. In this case,
Lemma~\ref{L:lifted analytic action} implies that $\Gamma$ acts continuously on $\calR_A$,
and
Corollary~\ref{C:lifted analytic action} implies that this action extends to an action of $\ZZ_p \llbracket \Gamma \rrbracket$. We may thus define a ring structure on the left $\calR_A$-module
\[
\calR_A \llbrace \Gamma \rrbrace := \calR_A \otimes_{\ZZ_p} \ZZ_p \llbracket \Gamma \rrbracket
\]
by extending the formula for multiplication in $\calR_A\{\Gamma\}$ (Definition~\ref{D:monoid ring}).
For $M$ a left $\calR_A \{ \Gamma \}$-module whose underlying $\calR_A$-module is finite projective, the action of $\Gamma$ on $M$ is continuous if and only if $M$ promotes to a left $\calR_A\llbrace \Gamma \rrbrace$-module.
\end{defn}

\section{Some examples}
\label{sec:examples}

We now make contact with the literature by interpreting a number of prior constructions as examples of the setup
we have introduced.
In all cases involving a group action, the completion of the monoid rings indicated in Remark~\ref{R:p-adic Lie} is the one described in
Definition~\ref{D:completed monoid ring}.

\begin{example} \label{exa:pDE}
For $A = k((\overline{\pi}))$, the ring $\calR_A$ 
is the ring of formal Laurent series $\sum_{i \in \ZZ} c_i \pi^i$ with coefficients in $K$ which converge on some open annulus with outer radius 1. 
This ring is not noetherian, but is an integral domain in which every finitely generated ideal is principal (i.e., a \emph{B\'ezout domain}).
The construction also makes sense when $K$ is not discretely valued, but we ignore this point here.

For $K$ of mixed characteristics, the ring $\calR_A$ coincides with the \emph{Robba ring over $K$} in the theory of $p$-adic differential equations
(e.g., \cite[Definition~15.1.4]{kedlaya-course}).
In this context, it is crucial that formal differentiation of series gives rise to a well-defined $K$-linear derivation $\frac{d}{d\pi}: \calR_A \to \calR_A$; a typical object of study is a differential module over $\calR_A$ whose underlying $\calR_A$-module is finite projective (or equivalently, finite free). Note that a differential module over $\calR_A$ can itself be viewed as a left module for the twisted polynomial ring $\calR_A\left\{\frac{d}{d\pi} \right\}$.

One can also consider a differential module over $\calR_A$ with \emph{Frobenius structure},
with respect to some relative $q$-power Frobenius lift $\varphi$, i.e., a differential module $M$ equipped with an isomorphism with its $\varphi$-pullback.
Such an object may be viewed as an enhanced projective $\varphi$-module for the noncommutative ring
\[
R = \calR_A \left\{ \varphi, \frac{d}{d\pi} \right\} / \left(
\frac{d}{d\pi} \varphi - \left(\frac{d\varphi(\pi)}{d\pi}\right) \varphi \frac{d}{d\pi} \right).
\]
An important theorem in the subject (called variously \emph{Crew's conjecture}, the \emph{$p$-adic Turrittin theorem}, or the \emph{$p$-adic local monodromy theorem})
asserts that (under some conditions on $\varphi$, e.g., when $\varphi$ is an absolute Frobenius lift)
for any enhanced projective $\varphi$-module $M$, there exists a 
finite \'etale $A$-algebra $B$ such that $M \otimes_{\calR_A} \calR_B$ is unipotent as a differential module over $\calR_B$,
i.e., it is isomorphic to a successive extension of copies of $\calR_B$ itself.
For further discussion, see for example \cite[Chapters~19--20]{kedlaya-course}.

This example does not admit overconvergent descent. For $\frako = W(k)$,
by a theorem of Crew \cite[Theorem~2.1]{crew-f}, the category of enhanced $\varphi$-modules over $\calE_A^{\inte}$ is equivalent to the category of continuous representations of $G_A$ on finite free $\frako$-modules.
By another theorem of Tsuzuki \cite[Theorem~4.2.6]{tsuzuki-finite}, the category of enhanced $\varphi$-modules over $\calR_A^{\inte}$ is equivalent to the subcategory of representations with finite monodromy (meaning that the image of the inertia subgroup $G_{\overline{k}((\overline{\pi}))}$ is finite).
\end{example}

\begin{remark} \label{R:pDE descent obstruction}
In Example~\ref{exa:pDE}, the categories of $\varphi$-modules and enhanced $\varphi$-modules over $\calE_A^{\inte}$ are equivalent; that is, any $\varphi$-action is compatible with a unique differential structure \cite[Theorem~3.3.2]{tsuzuki-over}. This is not true over $\calR_A^{\inte}$: while \textit{loc. cit.} implies that
the enhanced $\varphi$-modules over $\calR_A^{\inte}$ form a full subcategory of the $\varphi$-modules over $\calR_A^{\inte}$, one also finds therein an example of an enhanced $\varphi$-module over $\calE_A^{\inte}$ for which the underlying $\varphi$-module is realized over $\calR_A^{\inte}$, but the differential module is not. 
That is, there exists a $\varphi$-module $M$ over $\calR_A^{\inte}$ for which the unique compatible differential action on $M \otimes_{\calR_A^{\inte}} \calE_A^{\inte}$ does not preserve $M$.
\end{remark}

\begin{example} \label{exa:pHT}
Take $k = \FF_p$, $K = \QQ_p$, $A = \FF_p((\overline{\pi}))$. The resulting ring $\calR_A$ appears frequently in $p$-adic Hodge theory as the base ring $\bB^{\dagger}_{\mathrm{rig}, \QQ_p}$ of the theory of \emph{$(\varphi, \Gamma)$-modules}, described originally by Berger \cite{berger-inv}
as a modification of Fontaine's original version \cite{fontaine-phigamma}.
In this context, $\calR_A$ naturally occurs equipped with a particular Frobenius lift
\[
\varphi: \sum_i c_i \pi^i \mapsto \sum_i c_i ((1 + \pi)^p-1)^i
\]
as well as a continuous action (see Remark~\ref{R:p-adic Lie}) of the group $\Gamma = \ZZ_p^\times$ given by
\[
\gamma: \sum_i c_i \pi^i \mapsto \sum_i c_i ((1 + \pi)^\gamma - 1)^i \qquad (\gamma \in \Gamma),
\]
where $(1 + \pi)^\gamma$ is interpreted using the binomial expansion. 
The category of enhanced $\varphi$-modules over $\calE_A^{\inte}$ is equivalent to the category of continuous representations of $G_{\QQ_p}$ on finite free $\ZZ_p$-modules (we refer to this hereafter as \emph{Fontaine's equivalence}).
This example admits overconvergent descent holds by a theorem of Cherbonnier--Colmez \cite{cherbonnier-colmez};
see also \cite{kedlaya-newphigamma} for an alternate proof.

The coincidence of rings between this example and Example~\ref{exa:pDE} is exploited in \cite{berger-inv} to prove a conjecture of Fontaine: every $p$-adic Galois representation which is de Rham is potentially semistable. Roughly speaking, one uses differential modules to convert a $p$-adic Galois representation into an associated Weil-Deligne representation.
\end{example}

\begin{example} \label{exa:phi-tau}
Take $k$ finite, $A = k((\overline{\pi}))$, $\frako = W(k)$, but now equip $\calR_A$
with the Frobenius lift
\[
\varphi: \sum_i c_i \pi^i \mapsto \sum_i c_i \pi^{pi}.
\]
This example admits overconvergent descent by work of Gao--Liu \cite{gao-liu} and Gao--Poyeton \cite{gao-poyeton}.

This construction also appears naturally in $p$-adic Hodge theory, notably in Kisin's classification of crystalline representations \cite{kisin-crys}. An analogue of the theory of $(\varphi, \Gamma)$-modules in this setting was described by Caruso \cite{caruso}; the analogue of Fontaine's equivalence is an equivalence between the category of enhanced $\varphi$-modules over $\calE_A^{\inte}$ and the category of continuous representations of $G_{\QQ_p}$ on finite free $\ZZ_p$-modules. However, this is more subtle than in Example~\ref{exa:pHT} because the enhanced $\varphi$-module includes an additional structure that does not come from a group action commuting with $\varphi$. This distinction arises from the fact that Example~\ref{exa:pHT} is (in a sense to be described later) controlled by the $p$-cyclotomic tower of $\QQ_p$, with the group $\ZZ_p^\times$ manifesting as the Galois group of this tower, whereas this example is controlled by a non-Galois Kummer tower.
\end{example}

\begin{example} \label{exa:kisin-ren}
Take $A = k((\overline{\pi}))$.
Let $K$ be a finite totally ramified extension of $\Frac W(k)$.
Let $L$ be a finite extension of $\QQ_p$ within $K$ with residue field $\FF_q$. 
Choose a uniformizer $\varpi_L \in \frako_L$,
form the corresponding Lubin-Tate formal $\frako_L$-module,
and form the absolute $q$-power Frobenius lift
\[
\varphi: \pi \mapsto [\varpi_L](\pi).
\]
Let $\Gamma$ be the Galois group of the extension of $K$ generated by the $\varpi_L$-power torsion of the formal group; then $\Gamma$ is identified with an open subgroup of $\frako_L^\times$ via the cyclotomic character, 
and acts on $\calR_A$ commuting with $\varphi$ (again via formal multiplication).

This variant of Example~\ref{exa:pHT} was introduced by  Fourquaux \cite{fourquaux}, and treated in more detail by Kisin--Ren \cite{kisin-ren}
with an eye towards studying crystalline representations. 
The analogue of Fontaine's equivalence in this setting is due to Chiarellotto--Esposito \cite{chiarellotto-esposito}.
This example does not admit overconvergent descent by the results of Kisin--Ren (see also \cite{fourquaux-xie}):
the enhanced $\varphi$-modules over $\calR_A^{\inte}$ correspond to Galois representations obeying an additional analyticity property.
\end{example}

\begin{example} \label{exa:tensor product}
For $A = k((\overline{\pi})) \langle \overline{T}_1, \dots, \overline{T}_m \rangle$,
$\calR_A$ consists of formal sums
\[
\sum_{i_0 \in \ZZ} \sum_{i_1,\dots,i_m=0}^\infty c_{i_0,\dots,i_m} \pi^{i_0} T_1^{i_0} \cdots T_m^{i_m} \qquad (c_{i_0,\dots,i_m} \in K)
\]
which converge on the product (over $K$) of some open annulus with coordinate $\pi$ having outer radius 1 with the product of the closed unit discs with coordinates $T_1,\dots,T_m$.
That is, we have
\[
\calR_A \cong 
\calR_{k((\overline{\pi}))} \widehat{\otimes}_K K \langle T_1,\dots,T_m \rangle,
\]
using the LF topology on Robba rings to define the completed tensor product.
Similarly, for $A = 
k((\overline{\pi})) \langle \overline{T}_1, \dots, \overline{T}_m \rangle
\llangle \overline{U}_1, \dots, \overline{U}_n \rrangle$, we have
\[
\calR_A \cong \calR_{k((\overline{\pi}))} \widehat{\otimes}_K K \langle T_1,\dots,T_m \rangle \llangle U_1, \dots, U_m \rrangle.
\]
This is a special case of Example~\ref{exa:tensor product2}, which see for further discussion.
\end{example}

\begin{example} \label{exa:tensor product2}
Let $A$ be arbitrary, and fix a presentation
\[
k((\overline{\pi})) \langle \overline{T}_1, \dots, \overline{T}_m \rangle 
\llangle \overline{U}_1, \dots, \overline{U}_n \rrangle \to A
\]
with kernel $I$. 
Put
\[
A' = A \langle \overline{T}_1, \dots, \overline{T}_m, 
\overline{T}'_1,\dots, \overline{T}'_{m'}
\rrangle  \llangle \overline{U}_1, \dots, \overline{U}_n, \overline{U}'_1,\dots,\overline{U}'_{n'} \rrangle/(I).
\]
As in Example~\ref{exa:tensor product}, we have
\[
\calR_{A'} \cong 
\calR_A \widehat{\otimes}_K K \langle T'_1,\dots,T'_{m'} \rangle \llangle U'_1,\dots,U'_{n'} \rrangle.
\]
Now let $B$ be a semiaffinoid algebra over $K$, and choose a presentation
\[
K \langle T'_1,\dots,T'_{m'} \rangle \llangle U'_1,\dots,U'_{n'} \rrangle \to B.
\]
Let $J$ be the kernel of the map $\frako \langle T'_1,\dots,T'_{m'} \rangle \llangle U'_1,\dots,U'_{n'} \rrangle \to B$, let $\overline{J}$ be the image of $J$ in
$k((\overline{\pi}))\langle \overline{T}'_1,\dots,\overline{T}'_{m'} \rangle \llangle \overline{U}'_1,\dots,\overline{U}'_{n'} \rrangle$
(note that it consists of elements in which $\overline{\pi}$ does not appear)
and put $A'' = A'/(\overline{J})$; then
\[
\calR_{A''} \cong \calR_{A} \widehat{\otimes}_K B.
\]
Using such an identification, structures on $\calR_A$ (e.g., a relative Frobenius lift) may be extended $B$-linearly to $\calR_{A''}$.
This type of base extension occurs frequently in the study of \emph{arithmetic families} of $(\varphi, \Gamma)$-modules, as in \cite{berger-colmez}, \cite{dee}, \cite{hellmann-schraen}, \cite{kedlaya-liu-families}, \cite{kpx}.

In such examples, one starts with a relative Frobenius lift as in Example~\ref{exa:pHT}; it should be possible to 
establish overconvergent descent in this setting (e.g., by adapting the method of \cite{kedlaya-newphigamma}), but we are only aware of partial results in this direction
(e.g., see \cite{berger-colmez}, \cite{kedlaya-liu-families}).
One issue is that there is not a complete analogue of the Fontaine equivalence: the natural functor from Galois representations to enhanced $\varphi$-modules over $\calE_A^{\inte}$ in this setting is not essentially surjective
(see \emph{loc. cit.}).
\end{example}

\begin{example}
For $A = k((\overline{\pi})) \langle \overline{T}_1, \dots, \overline{T}_m \rangle
\llangle \overline{U}_1,\dots, \overline{U}_n \rrangle$, consider the absolute Frobenius $\varphi$ on $\calR_A$ given by
\[
\pi \mapsto (1 + \pi)^p - 1, \qquad T_i \mapsto T_i^p, \qquad U_j \mapsto U_j^p.
\]
The ring $\calR_A$ then admits an action of the group $\Gamma = \ZZ_p^\times \ltimes \ZZ_p^{m+n}$
commuting with $\varphi$, where $\ZZ_p^\times$ acts as in Example~\ref{exa:pHT} fixing the $T_i$ and $U_j$, and $(e_1,\dots,e_m,f_1,\dots,f_n) \in \ZZ_p^{m+n}$ acts via
\[
\pi \mapsto \pi, \qquad T_i \mapsto (1 + \pi)^{e_i} T_i, \qquad U_j \mapsto (1 + \pi)^{f_j} U_j.
\]
These operations persist upon replacing $A$ with a semiaffinoid localization or a finite \'etale extension.

The analogue of the Fontaine equivalence in this context involves
$p$-adic \'etale local systems on rigid analytic spaces, or equivalently $p$-adic representations of the \'etale fundamental groups of connected rigid analytic spaces
\cite{andreatta-brinon}, \cite{andreatta-iovita}, \cite{kedlaya-liu2}. This example admits overconvergent descent by the results of \cite{kedlaya-liu2}.
\end{example}

\begin{example}
Let $\overline{\pi}_1,\dots,\overline{\pi}_m$ denote the images of $\overline{T}_1,\dots,\overline{T}_m$ in 
\[
A = k((\overline{\pi}))\llangle \overline{T}_1, \dots, \overline{T}_m \rrangle/(\overline{\pi} - \overline{T}_1 \cdots \overline{T}_m)
\cong k \llbracket \overline{\pi}_1,\dots,\overline{\pi}_m \rrbracket[\overline{\pi}_1^{-1},\dots, \overline{\pi}_m^{-1}];
\]
note that $A$ is not an affinoid algebra if $m>1$.
Construct $\calR_A$ using the dagger lift
\[
S = \bA \langle T_1,\dots,T_m \rangle \llangle U_1,\dots,U_n \rrangle/(\pi - T_1 \cdots T_m)
\]
and let $\pi_1,\dots,\pi_m$ denote the images of $T_1,\dots,T_m$ in $\calR_A$.
For $i=1,\dots,m$, let $\varphi_i: \calR_A \to \calR_A$ be the substitution taking 
$\pi_i$ to $(1+\pi_i)^p-1$ and fixing $\pi_j$ for $j \neq i$. The composition
$\varphi$ of $\varphi_1,\dots,\varphi_m$ (in any order) is an absolute $p$-power Frobenius lift.
Define an action of $\Gamma = (\ZZ_p^\times)^m$ on $\calR_A$ in such a way that for $i=1,\dots,m$, the $i$-th copy of $\ZZ_p^\times$ acts on $\pi_i$ as in
Example~\ref{exa:pHT} and fixes $\pi_j$ for $j \neq i$; then $\Gamma$ commutes with all of the $\varphi_i$.
This example is considered by Z\'abr\'adi \cite{zabradi-phigamma2};
the analogue of the Fontaine equivalence involves
representations of powers of the absolute Galois group of a $p$-adic field.
This example admits overconvergent descent by the results of \cite{carter-kedlaya-zabradi}, \cite{pal-zabradi}.
\end{example}

\begin{example} \label{exa:Berger}
Let $F$ be the finite unramified extension of $\QQ_p$ of degree $h$, and fix a Lubin-Tate formal $\frako_F$-module. Put
\[
A = k((\overline{\pi}))\langle \overline{T}_1,\dots,\overline{T}_h \rangle
\]
and let $Y_1,\dots,Y_h$ be the elements
$\overline{\pi} \overline{T}_1, \overline{\pi}^p \overline{T}_2, \dots, \overline{\pi}^{p^{h-1}} \overline{T}_h$
in $\calR_A$.
Define the ring homomorphism $\varphi_p$ as the substitution
\[
\varphi_p(Y_1) = Y_1, \dots, \varphi_p(Y_{h-1}) = Y_h, \varphi_p(Y_h) = [p](Y_1),
\]
where $[p]$ denotes the multiplication-by-$p$ operation in the chosen formal $\frako_F$-module.
Define an action of $\Gamma = \frako_F^\times$ by the formula
\[
\gamma(Y_i) = [\sigma^{i-1}(\gamma)](Y_i) \qquad (\gamma \in \frako_F^\times)
\]
where $\sigma: F \to F$ denotes the Frobenius automorphism; note that $\varphi_p$ does not commute with $\Gamma$ but $\varphi_p^h$ does. This examples was introduced by Berger \cite{berger-multi} to address the failure of overconvergent descent in the Fourquaux--Kisin--Ren construction.
It is not clear whether this example can be modified (by modifying the exact collection of structures included in the enhanced Frobenius lift) in such a way as to admit overconvergent descent; we prove a weaker statement
in this direction in Theorem~\ref{T:berger}.
\end{example}

\begin{remark}
As noted in the introduction, Z\'abr\'adi \cite{zabradi-phigamma} has described \emph{noncommutative Robba rings}; these have underlying topological groups coincident with examples we have already considered, but with a more exotic multiplication structure. It is not clear to us how to expand our framework so as to include these examples while still retaining the ability to prove meaningful statements; doing so might be significant for applications towards the $p$-adic Langlands correspondence.
\end{remark}

\section{APF extensions and perfectoid fields}
\label{sec:APF}

In preparation for the next section, we recall the relationship between the theory of \emph{arithmetically profinite extensions} of local fields of Fontaine--Wintenberger \cite{fontaine-wintenberger} and the theory of \emph{perfectoid fields} introduced in \cite{kedlaya-liu1} and \cite{scholze1} (and further discussed in \cite{kedlaya-newphigamma}).

\begin{defn}
Suppose that $M/L/K$ are extensions of algebraic extensions of $\QQ_p$ 
with $M/K$ Galois, and put $G = \Gal(M/K)$ and $H = \Gal(M/L)$.
For $i \geq -1$, let $G^i$ be the $i$-th ramification subgroup of $G$ in the upper numbering.
We say that $i$ is a \emph{ramification break} of $L/K$ if $G^i H \neq G^{i+\epsilon} H$ for all $\epsilon > 0$.
We denote by $i(L/K)$ the upper bound of those $i \geq -1$ for which $G^i H = G$.
Note that none of these definitions depends on the choice of $M$.
\end{defn}

\begin{hypothesis} \label{H:APF}
For the remainder of  \S\ref{sec:APF},
let $K$ be a finite extension of $\QQ_p$, let $K_\infty$ be an infinite algebraic extension of $K$,
and let $L_\infty$ be an infinite Galois extension of $K$ containing $K_\infty$. Put $G = \Gal(L_\infty/K)$ and
$H = \Gal(L_\infty/K_\infty)$.
\end{hypothesis}

\begin{defn} \label{D:APF}
We say that $K_\infty/K$ is \emph{arithmetically profinite (APF)}
if $G^i H$ is open in $G$ for all $i$. Equivalently, the ramification breaks of $K_\infty/K$ form an increasing sequence $i_0 < i_1 < \cdots$ of rational numbers and the fields
\[
K_j = K_\infty \cap L_\infty^{G^{i_j}} \qquad (j=0,1,\dots)
\]
(whose union is evidently equal to $K_\infty$) are \emph{finite}
extensions of $K$.
Note that any APF extension is \emph{almost totally ramified} and \emph{almost totally wildly ramified}.

We say that $K_\infty$ is \emph{strictly APF}
if $K_\infty$ is APF and
\[
\liminf_{j \to \infty} \frac{i'_j}{[K_j:K]} > 0,
\]
where 
\[
i'_j = \psi_{K_\infty/K}(i_j) = \int_0^{i_j} [G^0:G^v H^0] \,dv
\]
is the index in the lower numbering corresponding to $i_j$ in the upper numbering. Equivalently, we must have $c(K_\infty/K) > 0$ for
\[
c(K_\infty/K) = \liminf_{j \to \infty} \frac{p-1}{p} \frac{i(K_{j+1}/K_j)}{[K_{j+1}:K_0]}.
\]
\end{defn}

\begin{lemma} \label{L:APF to perfectoid}
Suppose that $K_\infty$ is strictly APF.
Let $v$ be the valuation on $K_\infty$ normalized so that $v(K^*) = \ZZ$. For $j >0$, we have the following.
\begin{enumerate}
\item[(a)]
For $y \in K_{j+1}$, we have 
\[
v(\Norm_{K_{j+1}/K_j}(y) - y^{[K_{j+1}:K_j]}) \geq \frac{p}{p-1} c(K_\infty/K).
\]
\item[(b)]
For $x \in K_j$, there exists $y \in K_{j+1}$ with
\[
v(\Norm_{K_{j+1}/K_j}(y) - x) \geq c(K_\infty/K).
\]
\end{enumerate}
\end{lemma}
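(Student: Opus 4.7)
The plan is to deduce both parts from the fundamental ramification estimate underlying Fontaine--Wintenberger's theory of strictly APF extensions. Concretely, I aim to show first that for every non-identity $\sigma \in \Gal(K_{j+1}/K_j)$ and every $y \in K_{j+1}^\times$,
\[
v(\sigma(y) - y) - v(y) \geq \tfrac{p}{p-1}\, c(K_\infty/K).
\]
The derivation should run as follows: the compatibility of upper ramification numbering with the quotient $\Gal(L_\infty/K_j) \twoheadrightarrow \Gal(K_{j+1}/K_j)$ shows that $i(K_{j+1}/K_j)$ equals the smallest upper-numbering break of $\Gal(K_{j+1}/K_j)$, so every non-identity $\sigma$ lies in $\Gal(K_{j+1}/K_j)^{i(K_{j+1}/K_j)}$; the agreement of upper and lower numbering below the first break then yields $\sigma \in \Gal(K_{j+1}/K_j)_{i(K_{j+1}/K_j)}$; the standard estimate $v_{K_{j+1}}(\sigma(y) - y) \geq v_{K_{j+1}}(y) + i(K_{j+1}/K_j)$ follows; and the definition of $c(K_\infty/K)$ as $\liminf \tfrac{p-1}{p}\, \tfrac{i(K_{j+1}/K_j)}{[K_{j+1}:K_0]}$ (together with the boundedness of the residue-field extensions in the almost totally ramified setting) converts this into the displayed bound.

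For part (a), set $n = [K_{j+1}:K_j]$ and factor (for $y \neq 0$)
\[
\Norm_{K_{j+1}/K_j}(y) = y^n \prod_{\sigma}(1 + \epsilon_\sigma), \qquad \epsilon_\sigma := \frac{\sigma(y) - y}{y}.
\]
Since $\epsilon_1 = 0$ and $v(\epsilon_\sigma) \geq \tfrac{p}{p-1}\, c(K_\infty/K)$ for $\sigma \neq 1$ by Step~1, expanding $\prod_\sigma(1 + \epsilon_\sigma) - 1$ as a sum of monomials in the $\epsilon_\sigma$'s (each necessarily involving at least one non-identity factor) shows that this difference has valuation at least $\tfrac{p}{p-1}\, c(K_\infty/K)$. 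Multiplying through by $y^n$ gives
\[
v(\Norm_{K_{j+1}/K_j}(y) - y^n) \geq nv(y) + \tfrac{p}{p-1}\, c(K_\infty/K),
\]
which is the claim for $y$ integral (as implicit in the statement; the case $y = 0$ is trivial).

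For part (b), the strategy is to produce $y \in K_{j+1}$ with $v(y^n - x) \geq c(K_\infty/K)$ and then combine with (a) via $v(\Norm(y) - x) \geq \min\{v(\Norm(y) - y^n),\, v(y^n - x)\}$. To construct $y$, I choose $y_0 \in K_{j+1}$ with $v(y_0) = v(x)/n$, which exists because $K_{j+1}/K_j$ is almost totally ramified so the value groups scale correctly; then $u := x/y_0^n \in \frako_{K_j}^\times$. I approximate $u$ by an $n$-th power $z^n$ with $z \in \frako_{K_j}^\times$ to within valuation $c(K_\infty/K)$ in two steps: first match the Teichm\"uller part exactly using perfectness of the residue field (noting that $n$ is a power of $p$ by wild ramification), then handle the principal-unit part via Hensel's lemma applied to $Z^n - (1+w)$, which is valid once $v(w) > v(n)/(n-1)$, a condition achievable after absorbing a $c(K_\infty/K)$-small discrepancy. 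Setting $y := y_0 z$ then yields the required approximation.

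The main obstacle should lie in Step~1, specifically in obtaining the sharp constant $\tfrac{p}{p-1}\, c(K_\infty/K)$. The factor $\tfrac{p}{p-1}$ ultimately traces back to $v(p)/(p-1) = v(1-\zeta_p)$ and enters through the proper normalization in the Herbrand formalism; threading it carefully through the interplay of upper and lower numbering, the almost total ramification of $K_\infty/K$, and the liminf defining $c(K_\infty/K)$ is the most delicate point. The approximation step in part (b) is routine but demands careful accounting of the valuation lost when extracting $n$-th roots, with the $\tfrac{p}{p-1}$-to-$1$ slack between the bounds in (a) and (b) being precisely what absorbs this loss.
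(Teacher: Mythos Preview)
Your treatment of part (a) via the factorisation $\Norm(y) = y^n \prod_\sigma (1+\epsilon_\sigma)$ is exactly the intended ``straightforward'' argument; the paper gives no further detail, and for part (b) simply cites \cite[Lemme~3.1]{fontaine-wintenberger}. So you are supplying substantially more than the paper does.

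Two problems, though.

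First, a minor one pervading both parts: you invoke the definition of $c(K_\infty/K)$ as a $\liminf$ to bound $\tfrac{p-1}{p}\,i(K_{j+1}/K_j)/[K_{j+1}:K_0]$ from below, but a $\liminf$ only controls the tail, not every $j>0$ as the statement claims. This is most likely an imprecision in the statement (and irrelevant for the sole application, Proposition~\ref{P:perfectoid}), but you should flag it rather than slide past it.

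Second, and more seriously, your part (b) has a genuine gap. After matching the Teichm\"uller part you have $u = z_0^n(1+w)$ with $v(w) \ge v(\pi_{K_{j+1}}) = 1/e(K_{j+1}/K)$, and you propose Hensel for $Z^n - (1+w)$ once $v(w) > v(n)/(n-1)$. But $e(K_{j+1}/K) \to \infty$, so for large $j$ the bound on $v(w)$ is arbitrarily weak, and $v(w)$ can fall below both $c(K_\infty/K)$ and $v(n)/(n-1)$ simultaneously; neither branch of your dichotomy then fires. The $\tfrac{p}{p-1}$-to-$1$ slack you cite does not help here: it only enters \emph{after} you have secured $v(y^n - x) \ge c$, which is precisely what is in question. (Also, $u := x/y_0^n$ lies in $\frako_{K_{j+1}}^\times$, not $\frako_{K_j}^\times$, since $y_0 \in K_{j+1}$.) The Fontaine--Wintenberger argument does not pass through Hensel. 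One clean route: take $\pi_{j+1}$ a uniformiser of $K_{j+1}$, set $\pi_j := \Norm(\pi_{j+1})$, use (a) to get $v(\pi_j - \pi_{j+1}^n) \ge \tfrac{p}{p-1}c$, expand $x = \sum_m [a_m]\pi_j^m$, put $y = \sum_m [a_m^{1/n}]\pi_{j+1}^m$, and compare $y^n$ with $x$ both modulo $p$ (where $n$-th powering is additive) and modulo $\frakm^{\frac{p}{p-1}c}$ (where $\pi_j^m \equiv \pi_{j+1}^{nm}$).
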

\begin{proof}
Part (a) is straightforward. Part (b) is \cite[Lemme~3.1]{fontaine-wintenberger}.
\end{proof}
We can now make contact with a more modern point of view.

\begin{prop} \label{P:perfectoid}
If $K_\infty$ is strictly APF, then its completion is a \emph{perfectoid field} in the sense of
\cite[Definition~3.5.1]{kedlaya-liu1} or \cite[Definition~3.1]{scholze1}.
\end{prop}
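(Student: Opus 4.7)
To verify that $\widehat{K}_\infty$ is perfectoid, I need to check that (i) the value group $v(\widehat{K}_\infty^*)$ is non-discrete and (ii) the Frobenius endomorphism is surjective on $\mathfrak{o}_{\widehat{K}_\infty}/p$; the remaining requirements (complete non-archimedean field of mixed characteristic $(0,p)$) are automatic. Since $K_\infty/K$ is strictly APF, it is almost totally ramified with $[K_j:K] \to \infty$, so the value group $v(K_j^*) = \tfrac{1}{e(K_j/K)}\ZZ$ has arbitrarily small positive elements: this gives (i). Moreover, for $j$ sufficiently large the break $i_j$ is positive, so $G^{i_j}$ lies in the wild inertia and is pro-$p$; this forces $[K_{j+1}:K_j] = p^{a_j}$ with $a_j \geq 1$, and in particular makes $v(K_\infty^*)$ a $p$-divisible subgroup of $\QQ$.

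For (ii), set $c = c(K_\infty/K) > 0$ and let $x \in \mathfrak{o}_{\widehat{K}_\infty}$. Since $K_\infty$ is dense in $\widehat{K}_\infty$, I will approximate $x$ modulo $p$ by $x_0 \in \mathfrak{o}_{K_j}$, choosing $j$ large enough that $[K_{j+1}:K_j] = p^a$ with $a \geq 1$. By Lemma~\ref{L:APF to perfectoid}(b), pick $y \in K_{j+1}$ with $v(\Norm_{K_{j+1}/K_j}(y) - x_0) \geq c$; combining with (a), $v(y^{p^a} - x_0) \geq c$. Setting $z_0 = y^{p^{a-1}} \in K_{j+1}$ yields $v(z_0^p - x) \geq \min(c, v(p))$, which already suffices if $c \geq v(p)$.

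To boost this to $v(z^p - x) \geq v(p)$ in general, I would iterate as follows. Given $z_n$ with $\alpha_n := v(z_n^p - x) > 0$, use $p$-divisibility to pick $\pi_n \in \widehat{K}_\infty$ with $v(\pi_n) = \alpha_n/p$, so that $u_n := (x - z_n^p)/\pi_n^p$ is an integral unit. Approximating $u_n$ by an element of $K_{j'}$ and applying the previous step, find $w_n$ with $v(w_n^p - u_n) \geq c$, and set $z_{n+1} = z_n + \pi_n w_n$. Expanding by the binomial theorem, the contributions to $z_{n+1}^p - x$ are $\pi_n^p(w_n^p - u_n)$ of valuation $\geq \alpha_n + c$ together with cross-terms $\binom{p}{k} z_n^{p-k} (\pi_n w_n)^k$ for $1 \leq k \leq p-1$, each of valuation $\geq v(p) + \alpha_n/p$. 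Hence $\alpha_{n+1} \geq \min(\alpha_n + c, v(p) + \alpha_n/p)$, which strictly exceeds $\alpha_n$ until $\alpha_n \geq v(p)$, so finitely many steps suffice. I expect the main obstacle to be the bookkeeping in this iteration: the value group is only $p$-divisible rather than divisible, so one must verify carefully at each step that the $\pi_n$ of the prescribed valuation actually exist and that the cross-terms in the binomial expansion are correctly controlled.
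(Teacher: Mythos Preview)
Your argument is correct and follows the same route as the paper: use Lemma~\ref{L:APF to perfectoid}(a)(b) together to produce $p$-th roots modulo an element of valuation $c(K_\infty/K)$. The paper stops there, since the definition of perfectoid in the cited references only requires Frobenius surjectivity on $\mathfrak{o}_{K_\infty}/(t)$ for \emph{some} $t \in \mathfrak{m}_{K_\infty}$; your successive-approximation step upgrading this to surjectivity modulo $p$ is correct but unnecessary once that formulation is invoked.
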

\begin{proof}
By definition, $K_\infty$ is nondiscrete; by
Lemma~\ref{L:APF to perfectoid}, there exists $t \in \frakm_{K_{\infty}}$ such that the Frobenius endomorphism of $\frako_{K_\infty}/(t)$ is surjective. 
This completes the proof.
\end{proof}

\begin{defn}
Suppose that $K_\infty$ is strictly APF. 
By Proposition~\ref{P:perfectoid}, we may apply the \emph{perfectoid correspondence} 
\cite[Theorem~3.5.3]{kedlaya-liu1}
(called \emph{tilting} in \cite[\S 3]{scholze1}) to the completion of $K_\infty$ to produce a perfect field $\tilde{\bE}_{K_\infty}$ of characteristic $p$ which is complete with respect to a distinguished norm. The valuation subring of $\tilde{\bE}_{K_\infty}$ is naturally identified with the inverse limit of $\frako_{K_\infty}/(p)$ under Frobenius.
We call $\tilde{\bE}_{K_\infty}$ the \emph{perfect norm field} associated to $K_\infty$.

Let $\bE_{K_\infty}$ be the inverse limit of the $K_j$ under the norm maps. This set inherits an obvious multiplication map; it also admits a natural addition map under which 
\[
(x_j)_{j=0}^\infty + (y_j)_{j=0}^\infty = \left( \lim_{j' \to \infty} \Norm_{K_{j'}/K_j} (x_{j'} + y_{j'}) \right)_{j=0}^\infty.
\]
It is shown in \cite{fontaine-wintenberger} that $\bE_{K_\infty}$ is a local field of characteristic $p$ whose valuation subring is the inverse limit of the $\frako_{K_j}$ under the norm maps. Using Lemma~\ref{L:APF to perfectoid}, we may identify that valuation subring with the inverse limit of the rings $\frako_{K_j}/(t)$ for some $t \in \frakm_{K_\infty}$ under the map induced by both $\Norm_{K_{j+1}/K_j}$
and $x \mapsto x^{[K_{j+1}:K_j]}$. In particular, we may identify $\tilde{\bE}_{K_\infty}$ with the completed perfect closure of $\bE_{K_\infty}$. We call $\bE_{K_\infty}$ the \emph{imperfect norm field} associated to $K_\infty$.
\end{defn}

\section{A conjecture on locally analytic vectors}
\label{sec:locally analytic conj}

We formulate a conjecture closely related to the overconvergence theorem of Cherbonnier and Colmez.

\begin{hypothesis} 
Throughout \S\ref{sec:locally analytic conj},
assume Hypothesis~\ref{H:APF}, but assume further that $G$ is a $p$-adic Lie group and that $L_\infty$ is almost totally ramified.
Also fix some $r>0$.
\end{hypothesis}

\begin{prop} \label{P:Sen}
The fields $K_\infty$ and $L_\infty$ are strictly APF, 
so (by Proposition~\ref{P:perfectoid}) their completions are perfectoid.
\end{prop}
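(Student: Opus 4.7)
The plan is to invoke Sen's theorem on ramification in $p$-adic Lie extensions, which provides two facts about a totally ramified Galois $p$-adic Lie extension with Galois group $\Gamma_0$: (i) every upper-numbering ramification subgroup $\Gamma_0^u$ ($u \geq 0$) is open in $\Gamma_0$, and (ii) the upper ramification breaks form an increasing sequence $i_0 < i_1 < \cdots$ tending to infinity whose successive gaps $i_j - i_{j-1}$ are bounded below by a positive constant for $j$ sufficiently large. Since $L_\infty/K$ is only almost totally ramified, $G^0$ has finite index in $G$ and $L_\infty^{G^0}/K$ is a finite unramified extension; applying Sen to the totally ramified extension $L_\infty/L_\infty^{G^0}$ transfers facts (i) and (ii) to the upper ramification filtration $\{G^u\}_{u \geq 0}$ on $G$ itself, at the cost of bookkeeping by the bounded factor $[L_\infty^{G^0}:K]$.

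For $L_\infty/K$ (the case $H = 1$): fact (i) directly yields APF. For strict APF, using that $v \mapsto [G^0:G^v]$ is a step function constant on $(i_{k-1}, i_k]$, one computes
\[
i'_j \;=\; \psi_{L_\infty/K}(i_j) \;=\; \sum_{k=1}^{j}[G^0:G^{i_k}](i_k - i_{k-1}) \;\geq\; [G^0:G^{i_j}](i_j - i_{j-1}) \;=\; [L_j : L_\infty^{G^0}](i_j - i_{j-1}).
\]
Hence $i'_j/[L_j:K] \geq (i_j - i_{j-1})/[L_\infty^{G^0}:K]$, and (ii) makes this $\liminf$ positive.

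For $K_\infty/K$: openness of each $G^u H$ follows from openness of $G^u$, so $K_\infty/K$ is APF. Moreover, every ramification break of $K_\infty/K$ is also a jump point of the finer filtration $\{G^u\}$ (if $G^u$ does not jump at $i$, then neither does $G^u H$), so the $K_\infty/K$-breaks form a subsequence of the $L_\infty/K$-breaks and inherit the gap lower bound from (ii). The analogous computation, now with $v \mapsto [G^0:G^v H^0]$ as the relevant step function in the formula for $\psi_{K_\infty/K}$, and with $[K_j:K]$ in place of $[L_j:K]$, gives $i'_j/[K_j:K] \geq (i_j - i_{j-1})/[L_\infty^{G^0}:K]$, establishing strict APF of $K_\infty/K$.

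The main obstacle is the correct invocation of Sen's quantitative conclusion (ii): the positive lower bound on upper-ramification gaps is the deep content of Sen's theorem, resting on his analysis of the $G$-action on the module of differentials of $L_\infty$ (equivalently, on the interaction between the $p$-adic logarithm on $G$ and the filtration). By contrast, the openness (i), the reduction to the totally ramified setting via the almost-totally-ramified hypothesis, and the passage between $L_\infty$ and $K_\infty$ via the subsequence observation are all formal.
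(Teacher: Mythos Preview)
Your proposal is correct and rests on the same source as the paper's proof, namely Sen's theorem; the paper's entire argument is the single sentence ``This is a theorem of Sen \cite{sen}.'' You have unpacked that citation considerably, spelling out both the openness statement and the quantitative gap bound on upper ramification breaks, and then deriving strict APF for $L_\infty/K$ directly and for $K_\infty/K$ via the observation that its breaks form a subsequence of those of $L_\infty/K$. This elaboration is sound and more informative than the paper's bare citation, though strictly speaking the result for arbitrary closed subgroups (hence for $K_\infty$) is already part of the standard package attributed to Sen and Wintenberger, so one could equally well cite it as a black box.
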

\begin{proof}
This is a theorem of Sen \cite{sen}.
\end{proof}

\begin{defn}
Let $\CC_p$ be a completed algebraic closure of $K$ containing $L_\infty$. By Proposition~\ref{P:Sen} and the compatibility of the perfectoid correspondence with algebraic field extensions
\cite[Theorem~3.5.6]{kedlaya-liu1}, \cite[Theorem~3.7]{scholze1},
$\CC_p$ corresponds to a perfect field $\tilde{\bE}$ of characteristic $p$ on which $G_{K_\infty}$ acts.
\end{defn}

\begin{lemma}
We have
\[
\tilde{\bE}^{K_\infty} = \tilde{\bE}_{K_\infty}, \qquad
\tilde{\bE}^{L_\infty} = \tilde{\bE}_{L_\infty}.
\]
\end{lemma}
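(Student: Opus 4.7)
The plan is to deduce both equalities from the Galois-equivariance of the perfectoid correspondence, applied to the tower $K \subseteq K_\infty \subseteq L_\infty \subseteq \CC_p$, together with the Ax--Sen--Tate theorem identifying fixed subfields of $\CC_p$ with completions.

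First, by Proposition~\ref{P:Sen} both $K_\infty/K$ and $L_\infty/K$ are strictly APF, so by Proposition~\ref{P:perfectoid} their completions $\widehat{K_\infty}$ and $\widehat{L_\infty}$ are perfectoid fields. By construction of $\tilde{\bE}_{K_\infty}$ and $\tilde{\bE}_{L_\infty}$ as the tilts of these completions (under the perfectoid correspondence \cite[Theorem~3.5.3]{kedlaya-liu1}, \cite[\S 3]{scholze1}), there are canonical isometric embeddings $\tilde{\bE}_{K_\infty} \hookrightarrow \tilde{\bE}$ and $\tilde{\bE}_{L_\infty} \hookrightarrow \tilde{\bE}$ compatible with the Galois actions.

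Next, I would apply Ax--Sen--Tate (or equivalently, the fact that the completion of any algebraic extension of $K$ inside $\CC_p$ is closed under its stabilizer's action): $\CC_p^{G_{K_\infty}} = \widehat{K_\infty}$ and $\CC_p^{G_{L_\infty}} = \widehat{L_\infty}$. Then the compatibility of the perfectoid correspondence with algebraic extensions \cite[Theorem~3.5.6]{kedlaya-liu1}, \cite[Theorem~3.7]{scholze1} applied to the algebraic extensions $\CC_p/\widehat{K_\infty}$ and $\CC_p/\widehat{L_\infty}$ gives corresponding algebraic extensions $\tilde{\bE}/\tilde{\bE}_{K_\infty}$ and $\tilde{\bE}/\tilde{\bE}_{L_\infty}$ whose absolute Galois groups are identified with $G_{K_\infty}$ and $G_{L_\infty}$, acting compatibly with the actions already in play. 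Consequently $\tilde{\bE}^{G_{K_\infty}} = \tilde{\bE}_{K_\infty}$ and $\tilde{\bE}^{G_{L_\infty}} = \tilde{\bE}_{L_\infty}$, as claimed.

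The only nontrivial ingredient is the Galois-equivariance statement in the last step; this is precisely the content of the cited compatibility theorem, so there is no serious new obstacle beyond verifying that the setup fits its hypotheses (both intermediate fields are perfectoid, and the ambient $\CC_p$ is a perfectoid algebraic closure). Everything else is immediate from the definitions.
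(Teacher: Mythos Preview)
Your approach is correct in outline but differs from the paper's proof, and one step needs tightening.

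The paper argues entirely on the tilted side via a descent argument: for each finite extension $M$ of $K_\infty$ inside $\CC_p$ it writes down the strict exact Amitsur-type sequence
\[
0 \to \tilde{\bE}_{K_\infty} \to \tilde{\bE}_M \to \tilde{\bE}_M \otimes_{\tilde{\bE}_{K_\infty}} \tilde{\bE}_M,
\]
passes to the direct limit over $M$, completes, and then identifies the factors of $\tilde{\bE} \widehat{\otimes}_{\tilde{\bE}_{K_\infty}} \tilde{\bE}$ with elements of $G_{K_\infty}$ (using \cite[Theorem~2.3.10]{kedlaya-liu1}) to conclude. No appeal to Ax--Sen--Tate is made.

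Your route instead invokes Ax--Sen--Tate on the characteristic~$0$ side together with the tilting equivalence. This is legitimate and more conceptual, but the clause ``Consequently $\tilde{\bE}^{G_{K_\infty}} = \tilde{\bE}_{K_\infty}$'' does not follow merely from the identification of absolute Galois groups: $\tilde{\bE}$ is the \emph{completed} algebraic closure of $\tilde{\bE}_{K_\infty}$, so ordinary Galois theory only gives the invariants of the (uncompleted) separable closure. You must either invoke Ax's theorem again in characteristic~$p$, or---cleaner---observe that the tilting functor is an inverse limit $\varprojlim_{x \mapsto x^p}$ and therefore commutes with taking $G_{K_\infty}$-invariants, giving
\[
\tilde{\bE}^{G_{K_\infty}} = (\CC_p^\flat)^{G_{K_\infty}} = (\CC_p^{G_{K_\infty}})^\flat = (\widehat{K_\infty})^\flat = \tilde{\bE}_{K_\infty}.
\]
With that sentence added your proof is complete. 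Your argument is shorter but imports Ax--Sen--Tate as a black box; the paper's argument stays inside the perfectoid framework and makes the descent explicit.
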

\begin{proof}
For each finite extension $M$ of $K_\infty$ within $\CC_p$, the
perfectoid correspondence assigns to $M$ a finite extension $\tilde{\bE}_M$ of $\tilde{\bE}_{K_\infty}$ within $\tilde{\bE}$.
We then have an exact sequence
\[
0 \to \tilde{\bE}_{K_\infty} \to \tilde{\bE}_M \to \tilde{\bE}_M \otimes_{\tilde{\bE}_{K_\infty}} \tilde{\bE}_M
\]
where the last map is $x \mapsto x \otimes 1 - 1 \otimes x$.
This sequence is moreover \emph{strict} exact for the tensor product norm on the last factor, but since Frobenius acts bijectively on all terms of the sequence that coincides with the spectral seminorm.
The latter in turn equals the supremum over the factors of $\tilde{\bE}_M \otimes_{\tilde{\bE}_{K_\infty}} \tilde{\bE}_M$ 
\cite[Theorem~2.3.10]{kedlaya-liu1}.
We may then take the direct limit over $M$ and complete to obtain
a strict exact sequence
\[
0 \to \tilde{\bE}_{K_\infty} \to \tilde{\bE} \to \tilde{\bE} \widehat{\otimes}_{\tilde{\bE}_{K_\infty}} \tilde{\bE}
\]
where again the tensor product seminorm coincides with the supremum over the factors of $\tilde{\bE} \widehat{\otimes}_{\tilde{\bE}_{K_\infty}} \tilde{\bE}$
by \cite[Theorem~2.3.10]{kedlaya-liu1}. Since those factors correspond to elements of $G_{K_\infty}$, any element of $\tilde{\bE}^{G_{K_\infty}}$ must map to zero in $\tilde{\bE} \widehat{\otimes}_{\tilde{\bE}_{K_\infty}} \tilde{\bE}$
and hence must lie in $\tilde{\bE}_{K_\infty}$. This proves the first inequality; taking $K_\infty = L_\infty$ yields the second inequality.
\end{proof}

\begin{defn}
Put 
\[
\tilde{\bA} = W(\CC_p), \qquad \tilde{\bA}_{K_\infty} = W(\tilde{\bE}_{K_\infty}), \qquad \tilde{\bA}_{L_\infty}= W(\tilde{\bE}_{L_\infty}).
\]
Let $\tilde{\bA}^{\dagger,r}$ be the subring of $\tilde{\bA}$ consisting of $r$-overconvergent Witt vectors as in \cite[Lemma~1.7.2]{kedlaya-newphigamma}; this ring carries the norm $\left| \bullet \right|_r$ defined by
\[
\left| \sum_{n=0}^\infty p^n [\overline{x}_n] \right|_r = \max_n \{p^{-n} \left| \overline{x}_n \right|^r \}.
\]
Put $\tilde{\bA}^{\dagger,r}_* = \tilde{\bA}^{\dagger,r} \cap \tilde{\bA}_*$.
\end{defn}

\begin{defn} \label{D:uniformly analytic}
Let $M$ be a finite projective module over $\tilde{\calR}^r_{\tilde{\bE}}$ equipped with a continuous $G$-action. Choose a presentation of $M$ as a direct summand of a finite free module, then use this presentation to define norms $\left| \bullet \right|_s$ for $s \in (0,r]$ as in Remark~\ref{R:projective module topology}. We say that a subset $S$ of $M$ is \emph{uniformly $r$-analytic} if for any $c \in (0,1)$, there exists a pro-$p$ compact open subgroup $H$ of $G$ such that for all $\gamma \in H$, all $\bv \in S$, all $s \in (0,r]$, and all nonnegative integers $m$,
\begin{equation} \label{eq:uniformly analytic}
\left| (\gamma^{p^m}-1)(\bv) \right|_s < \max\{p^{i-m} c^{p^{i}s}: i=0,\dots,m\} \left| \bv \right|_s.
\end{equation}
Note that it is equivalent to impose this inequality with an extra constant factor on the right-hand side, as this can be absorbed by rechoosing $H$.
From this, we see that this condition does not depend on the choice of the presentation of $M$.
\end{defn}

The terminology \emph{uniformly $r$-analytic} is motivated by the following observation.
\begin{lemma} \label{L:analytic to analytic}
Retain notation as in Definition~\ref{D:uniformly analytic}. If $S$ is a $G$-stable subgroup of $M$ which is uniformly $r$-analytic, then for $H$ as in Definition~\ref{D:uniformly analytic},
for any $s \in (0,r]$ the function $S \times \log(H) \to M$ given by
$(\bv, t) \mapsto \exp(t)(\bv)$ is analytic in $\log(H)$, i.e., it is  computed by a power series which uniformly over $S$ is convergent with respect to the norm $\left| \bullet \right|_s$.
\end{lemma}
\begin{proof}
When $\dim_{\QQ_p} G = 1$, we may deduce the claim from the binomial expansion
\[
\gamma^n = \sum_{i=0}^\infty \binom{n}{i} (\gamma-1)^i;
\]
the general case then follows using the Campbell-Hausdorff formula.
\end{proof}

\begin{example}
For any nonnegative integer $n$ and any $\overline{x} \in \varphi^{-n}(\bE_{K_\infty})$,
for $\gamma \in G^{i_j}$ we have $\left| (\gamma-1)(\overline{x}) \right| \leq c^{p^{j}} \left| \overline{x} \right|$ for some $c \in (0,1)$ independent of $j$.
It follows that the Teichm\"uller lift $[\overline{x}]$ is uniformly $r$-analytic for all $r>0$.
However, for $\overline{y} \in \varphi^{-n}(\bE_{K_\infty})$, the set $\{[\overline{x}], [\overline{y}]\}$ is not
guaranteed to be uniformly $r$-analytic.
\end{example}

\begin{defn} \label{D:affinoid model}
For $x \in \tilde{\bA}^{\dagger,r}_{K_\infty}$, an \emph{affinoid model} of $x$ is a triple $(A,\alpha,y)$ where $A$ is a smooth affinoid algebra over $\FF_p((\overline{\pi}))$ admitting a continuous $G$-action, $\alpha: A \to \tilde{\bE}_{K_\infty}$ is a bounded $G$-equivariant homomorphism, and $y \in \calR^{\inte}_A$ is an element mapping to $x$ under the map $\calR^r_A \to \tilde{\bA}^{\dagger,r}_{K_\infty}$ induced by $\alpha$.
We say that $x$ is an \emph{affinoid element} of $\tilde{\bA}^{\dagger,r}_{K_\infty}$ if it admits an affinoid model.
Let $\tilde{\bA}^{\dagger,r,\aff}_{K_\infty}$ be the subring of affinoid elements of $\tilde{\bA}^{\dagger,r}_{K_\infty}$.

We say that an element $x \in \tilde{\bA}^{\dagger,r}_{K_\infty}$ is \emph{locally analytic} (for the action of $G$)
if there exists a pro-$p$ compact open subgroup $H$ of $G$ such that the map $\log(H) \to \tilde{\calR}^r_{\tilde{\bE}}$ given by $t \mapsto \exp(t)(x)$ is analytic in $\log(H)$. Let $\tilde{\bA}^{\dagger,r,\an}_{K_\infty}$ be the subring of locally analytic elements of $\tilde{\bA}^{\dagger,r}_{K_\infty}$.
By  Corollary~\ref{C:lifted analytic action} and Lemma~\ref{L:analytic to analytic}, we have
$\tilde{\bA}^{\dagger,r,\aff}_{K_\infty} \subseteq \tilde{\bA}^{\dagger,r,\an}_{K_\infty}$.
\end{defn}

\begin{conj} \label{C:locally analytic1}
We have 
$\tilde{\bA}^{\dagger,r,\aff}_{K_\infty} = \tilde{\bA}^{\dagger,r,\an}_{K_\infty}$.
\end{conj}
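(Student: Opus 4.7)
The containment $\tilde{\bA}^{\dagger,r,\aff}_{K_\infty} \subseteq \tilde{\bA}^{\dagger,r,\an}_{K_\infty}$ is built into the definition, via Lemma~\ref{L:analytic}; the conjecture asserts the reverse. My plan is to take an element $x \in \tilde{\bA}^{\dagger,r,\an}_{K_\infty}$ and produce an affinoid model for it by extracting a finitely generated smooth subalgebra of $\tilde{\bE}_{K_\infty}$ from the orbit of the reduction $\overline{x}$ under the Lie algebra of $G$.

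The first step is to unpack the analyticity. Choose a pro-$p$ compact open subgroup $H \subseteq G$ witnessing that $\{x\}$ is uniformly $r$-analytic. By Lemma~\ref{L:analytic to analytic} the map $t \mapsto \exp(t)(x)$ is analytic in $t \in \log(H)$, and fixing a $\QQ_p$-basis $X_1,\dots,X_d$ of $\log(H)$, where $d = \dim_{\QQ_p} G$, produces Taylor coefficients $x_\alpha = \tfrac{1}{\alpha!} X^\alpha(x) \in \tilde{\bA}^{\dagger,r}_{K_\infty}$ with bounds $\left|x_\alpha\right|_s \leq C_s \rho^{-\left|\alpha\right|}$ for some $\rho > 0$ and constants $C_s$, uniformly for $s \in (0,r]$. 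Reducing modulo $p$, the $\overline{x}_\alpha \in \tilde{\bE}_{K_\infty}$ obey analogous bounds, and the group $H$ permutes them through the explicit analytic formulas prescribed by the exponential. All elements we shall need thus already live in a controlled locus of $\tilde{\bE}_{K_\infty}$.

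The second step is to produce a smooth affinoid algebra $A$ over $\FF_p((\overline{\pi}))$ equipped with a continuous $G$-action from these data. The natural candidate is the subring $\overline{B} \subseteq \tilde{\bE}_{K_\infty}$ generated by the $\overline{x}_\alpha$ and their $G$-translates, realized as a quotient of an appropriate Tate algebra via indeterminates sent to the $\overline{x}_\alpha$ with radii adapted to $\rho$. One hopes to cut this down to finite type by exhibiting finitely many $\overline{x}_\alpha$ that generate $\overline{B}$ topologically; smoothness should then follow from the observation that $H$ moves $\overline{x}$ along the $d$ Lie directions $X_1,\dots,X_d$ nondegenerately, yielding a free rank-$d$ cotangent module. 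After enlarging if necessary to obtain $G$-stability by the neighborhood-basis argument of Remark~\ref{R:open basis}, the inclusion $A \hookrightarrow \tilde{\bE}_{K_\infty}$ provides the bounded $G$-equivariant map $\alpha$. Finally, fixing a dagger lift $S$ of $A$ in the sense of \S\ref{sec:dagger lifts} and using the density statement of Lemma~\ref{L:reality check}(d) (together with the approximation step in the proof of Lemma~\ref{L:descend projective}, which shows that elements of $\tilde{\calR}^r_R$ can be approximated in $\tilde{\calR}_A$ with controlled $|\bullet|_r$), one lifts $x$ to an element $y \in \calR^{\inte}_A$ of the required form.

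The main obstacle is unquestionably the second step: the local analyticity of $x$ alone gives no a priori transcendence or noetherianity bound on $\overline{B}$, and an additional input is needed to enforce one. The natural source is the perfectoid structure of the completion of $K_\infty$ from Proposition~\ref{P:perfectoid}, which constrains the $p$-basis of $\tilde{\bE}_{K_\infty}$ and hence the number of independent analytic directions along which $x$ can vary. In the Lubin-Tate setting of Berger \cite{berger-multi} this input is packaged geometrically and forces both the affinoid algebra and the group action, reducing the conjecture to an identification check; for the general conjecture I expect the argument to follow the simplified Cherbonnier-Colmez strategy of \cite{kedlaya-newphigamma}, replacing the explicit cyclotomic generators there with an analytically-produced generating set adapted to the Lie algebra of $G$, with the perfectoid $p$-basis providing the missing finiteness.
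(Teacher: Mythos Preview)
This statement is a \emph{conjecture} in the paper, not a theorem: the paper offers no proof, and the remark immediately following it explicitly flags the obstruction (``the lack of an obvious construction of $G$-stable affinoid models''). So there is nothing to compare your argument against; the relevant question is whether your proposal actually closes the gap the paper leaves open. It does not.

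You correctly note that the inclusion $\tilde{\bA}^{\dagger,r,\aff}_{K_\infty} \subseteq \tilde{\bA}^{\dagger,r,\an}_{K_\infty}$ is already recorded in the paper, and your first step (extracting Taylor coefficients $x_\alpha$ from the analytic orbit map) is fine. The failure is entirely in the second step, and you essentially say so yourself. The ring $\overline{B}$ generated inside $\tilde{\bE}_{K_\infty}$ by the $\overline{x}_\alpha$ and their $G$-translates has no reason to be topologically finitely generated over $\FF_p((\overline{\pi}))$: uniform analyticity bounds the \emph{size} of the $\overline{x}_\alpha$, not the transcendence degree of the field they generate. Your smoothness heuristic (``$H$ moves $\overline{x}$ along the $d$ Lie directions nondegenerately'') conflates the dimension of $G$ with the dimension of the putative affinoid; there is no mechanism forcing these to match, and in general the $X_i(\overline{x})$ need not even be algebraically independent, nor need $\overline{B}$ be regular. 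Invoking Remark~\ref{R:open basis} presupposes you already have an affinoid $A$ on which $G$ acts, which is precisely what is missing. Finally, even granting such an $A$, the density statement of Lemma~\ref{L:reality check}(d) gives approximation in $\tilde{\calR}_A$, not membership of $x$ in the image of $\calR^{\inte}_A$; the definition of an affinoid model requires an exact preimage $y$, and your appeal to the approximation step of Lemma~\ref{L:descend projective} does not produce one.

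In short, the ``additional input'' you gesture at in your last paragraph is the entire content of the conjecture, and the paper itself says as much: outside the Lubin--Tate case (where the formal group supplies the affinoid model by hand, as in \cite{berger-multi}) and the one-variable dynamical extensions mentioned in the remark, no construction of the required $G$-stable smooth affinoid is known.
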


\begin{remark}
The definition of locally analytic vectors above (and in Definition~\ref{D:locally analytic vectors} below)
is in the style of Schneider--Teitelbaum \cite{schneider-teitelbaum} and Emerton \cite{emerton}.
One significant difficulty  with Conjecture~\ref{C:locally analytic1} 
is the lack of an obvious construction of $G$-stable affinoid models.
One case where this does not arise is when $K_\infty$ is the division field of a formal group, in which case the formal group law can be used to construct such models (as in \cite{berger-multi}) and then prove Conjecture~\ref{C:locally analytic1}
(as in \cite{berger-multi2}).
It may be possible to extend this construction to some additional cases in which $K_\infty$ arises by adjoining iterated inverse images of a finite rational map (e.g., the preperiodic points of an arithmetic dynamical system); the one-dimensional case of this has been treated by Cais--Davis \cite{cais-davis}.
\end{remark}

\begin{defn} \label{D:locally analytic vectors}
For $T$ a finite free $\ZZ_p$-module equipped with a continuous $G_K$-action, for $r>0$ put
\[
D^r_{K_\infty}(T) = (T \otimes_{\ZZ_p} \tilde{\bA}^{\dagger,r})^{G_{K_\infty}}.
\]
This is a module over $\tilde{\bA}^{\dagger,r}_{K_\infty}$,
and by the proof of \cite[Theorem~2.4.5]{kedlaya-newphigamma} (see also \cite[\S 8]{kedlaya-liu1}), the induced map
\[
D^r_{K_\infty}(T) \otimes_{\tilde{\bA}^{\dagger,r}_{K_\infty}} \tilde{\bA}^{\dagger,r}
\to T \otimes_{\ZZ_p} \tilde{\bA}^{\dagger,r}
\] 
is an isomorphism.

An element $\bv \in D^r_{K_\infty}(T)$ is \emph{locally analytic} (for the action of $G$)
if there exists a pro-$p$ compact open subgroup $H$ of $G$ such that the map $\log(H) \to T \otimes_{\ZZ_p} \tilde{\calR}^r_{\tilde{\bE}}$ given by $t \mapsto \exp(t)(\bv)$ is analytic in $\log(H)$.
Let $D^r_{K_\infty}(T)^{\an}$ be the 
set of locally analytic elements of $D^r_{K_\infty}(T)$ for the action of $G$;
 it is a module over $\tilde{\bA}^{\dagger,r,\an}_{K_\infty}$.
\end{defn}

\begin{conj} \label{C:locally analytic2}
Let $T$ be a finite free $\ZZ_p$-module equipped with a continuous $G_K$-action. Then for $r>0$, the natural map 
\[
D^r_{K_\infty}(T)^{\an} \otimes_{\tilde{\bA}^{\dagger,r,\an}_{K_\infty}} \tilde{\bA}^{\dagger,r}_{K_\infty}
\to D^r_{K_\infty}(T)
\]
is an isomorphism.
\end{conj}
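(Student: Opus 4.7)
The plan is to reduce Conjecture \ref{C:locally analytic2} to the existence of a suitable affinoid lift, in the spirit of Conjecture \ref{C:locally analytic1}, and then to deduce the desired local analyticity from Theorem \ref{T:analytic}.

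First I would recast the problem. The overconvergence isomorphism $D^r_{K_\infty}(T) \otimes_{\tilde{\bA}^{\dagger,r}_{K_\infty}} \tilde{\bA}^{\dagger,r} \cong T \otimes_{\ZZ_p} \tilde{\bA}^{\dagger,r}$ shows that, after inverting $p$ and passing to the Fr\'echet completion, $D^r_{K_\infty}(T)[p^{-1}]$ gives rise to an \'etale $(\varphi, \Gamma_K)$-module over $\tilde{\calR}_{\tilde{\bE}_{K_\infty}}$, where $\Gamma_K$ is the monoid generated by $\varphi$ together with the residual action of $\Gal(K_\infty/K) = G/H$. I then seek a smooth affinoid algebra $A$ over $\FF_p((\overline{\pi}))$ carrying a continuous $G$-action together with a $G$-equivariant bounded homomorphism $\alpha: A \to \tilde{\bE}_{K_\infty}$, and a $(\varphi, \Gamma)$-module $M_A$ over $\calR_A$ (whose enhanced Frobenius lift $\Gamma$ contains a lift of $G$), such that the base extension of $M_A$ along $\alpha$ recovers the \'etale $(\varphi, \Gamma_K)$-module attached to $T$.

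Granting such a lift, I proceed as follows. Theorem \ref{T:etale descent} produces a finite projective \'etale model $M_{A,0}$ over $\calR_A^{\inte}$ stable under the full $\Gamma$-action. For $n$ sufficiently large that $\varphi^{-n}(M_{A,0})$ is finitely generated over $\varphi^{-n}(\calR_A^{\inte,rq^{-n}})$, its generators form a uniformly $r$-analytic subset of the ambient $\tilde{M}_r$ by Theorem \ref{T:analytic}. Pushing forward through $\alpha$ realises these generators as uniformly $r$-analytic elements of $D^r_{K_\infty}(T)$, hence as elements of $D^r_{K_\infty}(T)^{\an}$. By construction they generate $D^r_{K_\infty}(T)$ as a module over $\tilde{\bA}^{\dagger,r}_{K_\infty}$, and a descent argument based on Lemma \ref{L:descend submodule} ensures that the corresponding finite projective $\tilde{\bA}^{\dagger,r,\an}_{K_\infty}$-submodule of $D^r_{K_\infty}(T)^{\an}$ gives the isomorphism claimed in the conjecture upon base extension.

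The main obstacle is the construction of the affinoid lift $M_A$ (including the algebra $A$ itself), which is essentially equivalent to the content of Conjecture \ref{C:locally analytic1}. I would attack it by a Newton-iteration style argument in the spirit of \cite{kedlaya-newphigamma}: start with an \'etale Frobenius matrix $F$ for $D^r_{K_\infty}(T)$ whose entries may be assumed uniformly $r$-analytic, then iteratively build approximate affinoid algebras $A_n$ with approximate lifts $M_{A_n}$ such that the $(n+1)$-th approximation corrects the $n$-th modulo $\varpi^{n+1}$, using Lemma \ref{L:descend projective} and the Hadamard-type estimate of Remark \ref{R:Hadamard} to control convergence. The genuine difficulty is the bootstrap $A_0$: in favourable situations (Lubin--Tate, or more generally when $K_\infty$ arises from iterated pullbacks of a rational self-map) the algebra is furnished by the geometric construction of the tower, but in general one would need to integrate the Lie algebra action of $G$ on $\tilde{\bE}_{K_\infty}$ to produce analytic $G$-stable coordinates, for which the tools developed in this paper do not yet suffice.
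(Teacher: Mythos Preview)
The statement is a \emph{conjecture}; the paper does not prove it in general, and only establishes the special case recorded as Theorem~\ref{T:berger} (unramified $K$, Lubin--Tate $K_\infty = L_\infty$, crystalline $T$), where Berger's work in \cite{berger-multi} supplies the affinoid model by hand. Your proposal is candid about this: you correctly locate the obstruction in the construction of a $G$-equivariant affinoid lift (essentially Conjecture~\ref{C:locally analytic1}), and you concede that the tools here do not suffice to produce one in general. So neither you nor the paper has a proof of the full statement, and in that sense your diagnosis matches the paper's.

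Where your outline does have a genuine gap is in the step you label ``Granting such a lift''. The homomorphism $\alpha: A \to \tilde{\bE}_{K_\infty}$ picks out a single point $\alpha \in \calM(R)$, and the information you possess is only that $M_A \otimes_{\calR_A} \tilde{\calR}_{\calH(\alpha)}$ is \'etale. Theorem~\ref{T:etale descent}, which you invoke directly, requires instead that $M_A \otimes_{\calR_A} \tilde{\calR}_R$ be \'etale, a global condition over all of $\calM(R)$. The passage from the pointwise to the local statement is precisely the content of Theorem~\ref{T:spread etale}: one must first spread \'etaleness to a rational localization $A \to A'$ encircling $\alpha$, and only then can one descend (over $A'$, not $A$). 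This is exactly how the paper argues in Theorem~\ref{T:berger}: Berger's module is at first only coadmissible, so Lemma~\ref{L:bundle to module} is used to make it finite projective after localization; Theorem~\ref{T:spread etale} then produces the \'etale model on a further localization; and Remark~\ref{R:open basis} is needed to arrange that $A'$ is $\Gamma$-stable, so that the resulting \'etale model carries the full $\Gamma$-action rather than just $\varphi$. Your route through Theorem~\ref{T:etale descent} and Lemma~\ref{L:descend submodule} jumps over this spreading step, and without it the argument does not go through even in the Lubin--Tate case.
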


\begin{remark}
One motivation for Conjecture~\ref{C:locally analytic2} is to approach a potential construction of the $p$-adic Langlands correspondence suggested by the work of Scholze--Weinstein \cite{scholze-weinstein} on moduli of $p$-divisible groups. Roughly speaking, given an $n$-dimensional representation $V$ of $G_K$, one expects the corresponding representation of $\GL_n(K)$ to be a sort of ``universal $(\varphi, \Gamma)$-module'' of $V$. That is, if we take $K_\infty$ to be the division field of a formal $\frako_K$-module of rank $n$, then its Galois group $G$ sits inside $\GL_n(K)$, and restricting the Langlands representation to $G$ should produce something related to the locally analytic representation coming from Conjecture~\ref{C:locally analytic2}. Moreover, this construction should in some sense be compatible with families; that is, it should work uniformly over the Scholze--Weinstein moduli space.
\end{remark}

\begin{remark}
Most of what we know about Corollary~\ref{C:locally analytic2} follows from cases of overconvergent descent
via the inclusion $\tilde{\bA}^{\dagger,r,\aff}_{K_\infty} \subseteq \tilde{\bA}^{\dagger,r,\an}_{K_\infty}$.
In particular, in the case where $K_\infty = L_\infty = K(\mu_{p^\infty})$, 
Conjecture~\ref{C:locally analytic2} follows immediately from 
overconvergent descent in Example~\ref{exa:pHT}, that is, from the 
theorem of Cherbonnier--Colmez \cite{cherbonnier-colmez}.
Similarly, in the case where $K_{\infty} = K(\varpi^{1/p^\infty})$ for some uniformizer $\varpi$ of $K$ and $L_{\infty}$ is the Galois closure of $K_\infty$, Conjecture~\ref{C:locally analytic2} follows from
overconvergent descent in Example~\ref{exa:phi-tau}, that is, from the theorem of
Gao--Poyeton \cite{gao-poyeton}.

An alternate proof of the Cherbonnier-Colmez theorem has been described in
\cite{kedlaya-newphigamma}. It is our hope that the ideas of that proof, implemented in some contexts
described in this paper, will lead to additional cases of Conjecture~\ref{C:locally analytic2}.
\end{remark}

\section{Analyticity in Berger's construction}
\label{sec:spreading}

To conclude, we link up with work of Berger \cite{berger-multi} to prove a new case of Conjecture~\ref{C:locally analytic2}. To do so, we relate our setup (in the case of an \emph{absolute} Frobenius lift)
to the theory of relative $\varphi$-modules developed in \cite{kedlaya-liu1}. 

\begin{hypothesis}
Throughout \S\ref{sec:spreading}, retain Hypothesis~\ref{H:absolute Frobenius lift},
but with the further restriction that $\varphi$ be an absolute $q$-power Frobenius lift.
Fix an element $\alpha$ of the Gel'fand spectrum $\calM(A)$, the space of bounded multiplicative seminorms on $A$ equipped with the evaluation topology. Note that $\alpha$ extends uniquely to $\tilde{A}$, yielding a natural homeomorphism $\calM(A) \cong \calM(\tilde{A})$.
There is also a homomorphism $\tilde{A} \to \calH(\alpha)$ to a complete nonarchimedean field $\calH(\alpha)$ such that the restriction of the norm on $\calH(\alpha)$ computes $\alpha$ on $\tilde{A}$.
\end{hypothesis}

\begin{defn}
A \emph{rational localization} of $A$ is a homomorphism $A \to B$ representing a rational subdomain of
$\calM(A)$ (e.g., in the sense of \cite[Definition~2.4.7]{kedlaya-liu1}); for such a homomorphism, the map $\calM(B) \to \calM(A)$ identifies $\calM(B)$ with a closed
subset of $\calM(A)$. We say that a rational localization $A \to B$ \emph{encircles} $\alpha$
if $\calM(B)$ is a neighborhood of $\alpha$ in $\calM(A)$; note that such neighborhoods form a neighborhood
basis of $\alpha$ in $\calM(A)$.
\end{defn}

\begin{theorem} \label{T:spread etale}
Let $M$ be a projective $\varphi$-module over $\calR_A$ such that 
$M \otimes_{\calR_A} \tilde{\calR}_{\calH(\alpha)}$ is 
\'etale. Then there exists a rational localization $A \to A'$ encircling $\alpha$ such that $M \otimes_{\calR_A}
\calR_{A'}$ admits a free \'etale model, and in particular is \'etale.
\end{theorem}
\begin{proof}
By \cite[Corollary~7.3.8]{kedlaya-liu1}, there exists a rational localization $\tilde{A} \to \tilde{A}'$ encircling $\alpha$
such that $M \otimes_{\calR_A} \tilde{\calR}_{\tilde{A}'}$ admits a free \'etale model.
By approximating the parameters defining the rational localization, we can write it as the base extension
of a rational localization of $\varphi^{-n}(A)$ for some nonnegative integer $n$; by raising the parameters
to the $q^n$-th power, we can then write $\tilde{A}'$ as the completed perfect closure of $A'$ for some rational localization $A \to A'$ encircling $\alpha$. By Lemma~\ref{L:etale descent}, $M \otimes_{\calR_A}
\calR_{A'}$ admits a free \'etale model.
\end{proof}

\begin{remark}
In general, $M \otimes_{\calR_A} \tilde{\calR}_{\calH(\alpha)}$ 
admits a \emph{slope polygon} according to a variant of the Dieudonn\'e-Manin classification; see \cite[\S 4]{kedlaya-liu1} for a summary. As a function of $\alpha$, the slope polygon is 
lower semicontinuous \cite[Theorem~7.4.5]{kedlaya-liu1}, which implies that \'etaleness at a point implies \'etaleness in an open neighborhood (as used in the proof of Theorem~\ref{T:spread etale}). 

The slope polygon need not be constant in a neighborhood of $\alpha$. If it is, then by \cite[Theorem~7.4.8]{kedlaya-liu1} there exists a rational localization $A \to A'$ encircling $\alpha$ such that $M \otimes_{\calR_A} \tilde{\calR}_{A'}$ admits a \emph{slope filtration} interpolating the Harder-Narasimhan filtration of $M \otimes_{\calR_A}
\tilde{\calR}_{\calH(\beta)}$ for each $\beta \in \calM(A')$.
This filtration has the property that each of its subquotients is a finite projective module over $\tilde{\calR}_{A'}$;
it therefore descends to a filtration of
$M \otimes_{\calR_A} \calR_{A'}$ with the analogous property thanks to
Lemma~\ref{L:descend submodule}. Each subquotient of the resulting filtration is \emph{pure}; that is, for some $c,d \in \ZZ$ with $d>0$, the action of $p^c \varphi^d$ admits an \'etale model.
\end{remark}

We now specialize to the situation considered by Berger.
\begin{theorem} \label{T:berger}
Conjecture~\ref{C:locally analytic2} holds in case $K$ is a finite unramified extension of $\QQ_p$, $K_\infty = L_\infty$ is the division field of a Lubin-Tate formal $\frako_K$-module of height
$h = [K:\QQ_p]$, and $T$ is a crystalline representation.
\end{theorem}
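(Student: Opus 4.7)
The strategy is to combine Berger's multivariate $(\varphi,\Gamma)$-module construction in \cite{berger-multi} with our descent Theorem~\ref{T:etale descent} and analyticity Theorem~\ref{T:analytic}. The multivariate Robba ring $\calR_A$ of Example~\ref{exa:Berger} is precisely the target of Berger's construction: for a crystalline representation $T$ of $G_K$, Berger produces a $(\varphi,\Gamma)$-module $M = M(T)$ over $\calR_A$, where $\Gamma = (\frako_K^\times)^h \rtimes \langle \varphi_p \rangle$ captures the full Lubin-Tate semigroup and the actual Galois group $G = \frako_K^\times$ embeds inside $\Gamma$ via Lubin-Tate theory.

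The first step is to verify that $M \otimes_{\calR_A} \tilde{\calR}_R$ is \'etale as a $\varphi$-module, where $R$ is the completion of $A^{\perf}$. This follows because the perfectoid tilt identifies a further base change of this module with the \'etale $\varphi$-module $T \otimes_{\ZZ_p} \tilde{\bA}^{\dagger,r}$ of Fontaine: concretely, composing the natural map $\calR_A \to \tilde{\calR}_R$ with base change to $\tilde{\bA}^{\dagger,r}$ recovers Berger's identification of $M$ with $D^r_{K_\infty}(T)$, and \'etaleness is preserved at each intermediate stage. Applying Theorem~\ref{T:etale descent} then produces an \'etale model $M_0$ of $M$ over $\calR_A^{\inte}$.

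Having reduced to an \'etale $(\varphi,\Gamma)$-module, I next apply Lemma~\ref{L:bundle to module} to descend $M$ to a finite projective $\calR_A^r$-module $M_r$ for some $r>0$, and invoke Theorem~\ref{T:analytic} to conclude that for each nonnegative integer $n$ the subset $\varphi^{-n}(M) \cap \tilde{M}_r$ is uniformly $r$-analytic for the $G$-action. The $G$-equivariant map $\tilde{\calR}_R^r \to \tilde{\bA}^{\dagger,r}$ transports this analyticity to $D^r_{K_\infty}(T)$: a basis of $M_r$ (suitably replaced by a Frobenius preimage if necessary, so that it maps into $\tilde{\bA}^{\dagger,r}_{K_\infty}$) yields a collection of elements of $D^r_{K_\infty}(T)^{\an}$ that generates $D^r_{K_\infty}(T)$ as an $\tilde{\bA}^{\dagger,r}_{K_\infty}$-module. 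A rank comparison between the two finite projective modules appearing in Conjecture~\ref{C:locally analytic2} then promotes surjectivity of the natural map to an isomorphism.

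The main obstacle will be the bookkeeping required to match Berger's construction in \cite{berger-multi} to the framework of Example~\ref{exa:Berger} and to our embeddings into $\tilde{\bA}^{\dagger,r}$. Specifically, one must verify that the base change of $M(T)$ through a carefully chosen embedding $\calR_A \to \tilde{\bA}^{\dagger,r}$ recovers $D^r_{K_\infty}(T)$ compatibly with the $G$-actions, bearing in mind that $G$ acts only through its Lubin-Tate embedding into $\Gamma_K = (\frako_K^\times)^h$, which involves Frobenius-twisted components across the $h$ factors. Once this dictionary is firmly in place, Theorems~\ref{T:etale descent} and~\ref{T:analytic} deliver the local analyticity essentially for free.
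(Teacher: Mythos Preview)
There is a genuine gap in your approach, concerning two related issues about what Berger's construction actually delivers.

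First, Berger's module $M(T)$ over $\calR_A$ is only known to be \emph{coadmissible and reflexive}, not finite projective (see \cite[Theorem~5.3]{berger-multi}). Thus it is not a priori a $(\varphi,\Gamma)$-module in the sense of this paper, and Theorem~\ref{T:etale descent} does not directly apply to it.

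Second, and more fundamentally, Berger establishes \'etaleness only after base change to $\tilde{\bB}^\dagger_{\rig}$ --- that is, along the map $\calR_A \to \tilde{\calR}_{\calH(\alpha)}$ for the \emph{single point} $\alpha \in \calM(R)$ arising as the image of $\calM(\tilde{\bE})$. This does not yield \'etaleness of $M \otimes_{\calR_A} \tilde{\calR}_R$ globally over $\calM(R)$, which is precisely what your first step asserts and what the hypothesis of Theorem~\ref{T:etale descent} requires. Your proposed verification (``\'etaleness is preserved at each intermediate stage'') runs in the wrong direction: knowing \'etaleness at $\alpha$ does not propagate backward to $\tilde{\calR}_R$.

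The paper resolves both issues simultaneously by \emph{localizing}. One first passes to a rational localization $A \to A'$ encircling $\alpha$ on which the rank of $M$ does not jump, so that $M \otimes_{\calR_A} \calR_{A'}$ becomes genuinely finite projective via Lemma~\ref{L:bundle to module}. One then invokes Theorem~\ref{T:spread etale} --- not Theorem~\ref{T:etale descent} directly --- to spread the pointwise \'etaleness at $\alpha$ to an \'etale model over $\calR_{A'}$, and uses Remark~\ref{R:open basis} to arrange that $A'$ is $\Gamma$-stable. Once this localization is in place, the image of $\calR_{A'}^{\inte}$ in $\tilde{\bA}^{\dagger,r}_{K_\infty}$ consists of affinoid elements, hence analytic ones by Lemma~\ref{L:analytic}, and a basis of the \'etale model supplies the required analytic generators of $D^r_{K_\infty}(T)$. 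Your invocation of Theorem~\ref{T:analytic} for the analyticity step is fine in spirit, but the missing ingredient is Theorem~\ref{T:spread etale} and the attendant passage to $A'$.
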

\begin{proof}
Set notation as in Example~\ref{exa:Berger}.
In \cite[\S 3]{berger-multi},
Berger exhibits a $(\varphi, \Gamma_K)$-equivariant map $\calR_A^{\inte} \to \tilde{\bA}^{\dagger}_{K_\infty}$.
This in turn induces a homomorphism $R \to \tilde{\bE}$ of Banach algebras,
which then gives rise to a map $\calM(\tilde{\bE}) \to \calM(R)$ of Gel'fand spectra. Since $\calM(\tilde{\bE})$
is a point, the image of this map is a point $\alpha$ of $\calM(R)$.

Berger also associates to $T$ (see \cite[Theorem~5.3]{berger-multi})
a coadmissible reflexive module $M$ over $\calR_A$ equipped with a semilinear continuous action of $\Gamma$ with the property that $M \otimes_{\calR_A} \tilde{\bB}^\dagger_{\rig}$ is \'etale.
In particular, the rank of $M$ does not jump up in a neighborhood of $\alpha$, so for some $r>0$ and some rational localization $A \to A'$ encircling $\alpha$, $M$ gives rise to a finite projective module over  $\calR_{A'}^{[r/q,r]}$. By Lemma~\ref{L:bundle to module}, $M \otimes_{\calR_A} \calR_{A'}$ is itself finite projective and hence a $(\varphi, \Gamma)$-module over $\calR_{A'}$.

Using Theorem~\ref{T:spread etale}, we may choose a rational localization $A \to A'$ such that
as a $\varphi$-module, $M \otimes_{\calR_A} \calR_{A'}$ admits an \'etale model; using the compactness of $\Gamma$,
we may further ensure that $\Gamma$ acts on $A'$. This yields the desired result.
\end{proof}

\begin{remark}
For $h = 1$, the spectrum of $\tilde{\bE}$ is the singleton set $\{\alpha\}$
and so the localization phenomenon does not arise. For $h > 1$, the point $\alpha$ is nonclassical; that is,
it does not arise from a maximal ideal of $A$. This is related to the fact that the ring $\calR_A$ does not admit a specialization map back to a power series ring in one variable  as in Example~\ref{exa:kisin-ren}, so
the failure of overconvergent descent in that example does not preclude overconvergent descent in
Example~\ref{exa:Berger}.
\end{remark}

\end{document}